\numberwithin{equation}{section}
\newtheorem{thm}{Theorem}[section]
\newtheorem{lem}[thm]{Lemma}
\newtheorem{defn}[thm]{Definition}
\theoremstyle{remark}
\newtheorem{remark}[thm]{Remark}
\theoremstyle{definition}
\newtheorem{example}[thm]{Example}
\newcommand{\bke}[1]{\left ( #1 \right )}
\newcommand{\bkt}[1]{\left [ #1 \right ]}
\newcommand{\bket}[1]{\left \{ #1 \right \}}
\newcommand{\norm}[1]{\left \| #1 \right \|}
\newcommand{\cB}{\mathcal{B}}
\newcommand{\R}{\mathbb{R}}
\newcommand{\N}{\mathbb{N}}
\renewcommand{\div}{\mathop{\rm div}\nolimits}
\newcommand\La{\Lambda}
\newcommand{\si}{\sigma}
\newcommand\de{\delta}
\newcommand{\nb}{\nabla}
\newcommand{\lec}{{\ \lesssim \ }}
\newcommand{\gec}{{\ \gtrsim \ }}
\newcommand{\bka}[1]{{\langle #1 \rangle}}
\newcommand\al{\alpha}
\newcommand\e {\varepsilon}  %
\renewcommand\th{\theta}
\newcommand\la{\lambda}
\newcommand{\ZZ}{\mathbb{Z}}
\newcommand{\NN}{\mathbb{N}}
\newcommand{\esssup} {\mathop{\mathrm{ess\,sup}}}
\newcommand{\supp} {\mathop{\mathrm{supp}}}
\newcommand{\pv} {\mathop{\mathrm{p.v.\!}}}
\newcommand{\td}{\tilde}
\newcommand{\I}{\infty}
\newcommand{\oo}{\infty}
\newcommand{\Eq}[1]{\begin{equation*} #1 \end{equation*}}
\newcommand{\EQ}[1]{\begin{equation} #1 \end{equation}}
\newcommand{\EQS}[1]{\begin{equation}\begin{split} #1 \end{split}\end{equation}}
\newcommand{\EQN}[1]{\begin{equation*}\begin{split} #1 \end{split}\end{equation*}}
\newcommand{\EN}[1]{\begin{enumerate} #1 \end{enumerate}}
\newcommand{\uloc}{{\mathrm{uloc}}}
\newcommand{\loc}{{\mathrm{loc}}}
\newcommand{\far}{{\mathrm{far}}}
\newcommand{\BKT}{\mathring M^{2,2}}
\newcommand{\II}{\mathrm{II}}
\newcommand{\ph}{\varphi}
\begin{document}

\title{Global Navier-Stokes flows in intermediate spaces}

\author{Zachary Bradshaw, Misha Chernobai and Tai-Peng Tsai}
\date{}
\maketitle

\begin{abstract}

We construct global weak solutions of the three dimensional incompressible Navier-Stokes equations in intermediate spaces between the space of uniformly locally square integrable functions and Herz-type spaces which involve weighted integrals centered at the origin.
Our results bridge the existence theorems of  Lemari\'e-Rieusset and of Bradshaw, Kukavica and Tsai. An application to eventual regularity is included which generalizes the prior work of Bradshaw, Kukavica and Tsai as well as Bradshaw, Kukavica and Ozanski.

\medskip

\medskip

\emph{2020 Mathematics Subject Classifications}: 35Q30, 76D03, 76D05
\end{abstract}

\setcounter{tocdepth}{1}
\renewcommand{\baselinestretch}{0.8}\normalsize
\tableofcontents
\renewcommand{\baselinestretch}{1.0}\normalsize

\section{Introduction}
Consider the incompressible Navier-Stokes equations in $\R^3$,
\EQS{\label{eq.NSE}
   &\partial_t u-\Delta u +u\cdot\nabla u+\nabla p = 0,
   \\& \nabla \cdot u=0,
   }
with divergence-free initial data $u_0$.
Global-in-time local energy solutions of \eqref{eq.NSE} for initial data which do not decay ``rapidly'' at spatial infinity was first constructed in local uniform $L^2$ spaces \cite{LR,KiSe,KwTs,MaMiPr}. It was then extended to data whose norms at a sequence of scales grow under certain rate (Herz spaces) \cite{Tsutsui,BT8,BKT,BKO,CW}, or is bounded in certain weighted spaces \cite{LR-Morrey,FDLR, Che}. They were also constructed
in the framework of Wiener amalgam spaces \cite{BT4,BLT} which are intermediate spaces between $L^2$, which is the initial data space for Leray's original solutions \cite{leray}, and $L^2_\uloc$.
The main feature of these solutions is that they satisfy the local energy inequality, the pressure satisfies the local pressure decomposition and they possess \textit{a priori} bounds in spaces tailored to the initial data.
The goal of this paper is to identify a continuous family of spaces between uniform local and Herz spaces where we can construct global-in-time local energy solutions.

For this purpose, we first reformulate the definitions of the local uniform $L^q$ space and the Herz spaces for comparison.
We may reformulate the definition of local uniform $L^q$ space as
\EQ{
L^q_\uloc=\bket{
f\in L^q_\loc(\R^3); \quad
\sup_{x\in \R^3} \int_{B_1(x)} |f|^q <\infty}.
}
Its subset of functions vanishing at spatial infinity is
\EQ{
E^q=\bket{
f\in L^q_\loc(\R^3); \quad
\lim_{|x|\to \infty} \int_{B_1(x)} |f|^q =0}.
}

Similarly, the Herz spaces considered in \cite{BKT} (see also discussion in \cite{FDLR2}) are, for $\la\ge 0$,
\EQ{\label{Mpq}
M^{q,\lambda} =\bket{ f\in L^q_\loc(\R^3), \quad {\int_{B_1(0)}|f|^q }+
\sup_{|x|>1}  \frac1{|x|^\la}\int_{B(x,  |x|/2)}|f|^q <\infty}.
}
Its subset of functions vanishing at spatial infinity is
\EQ{\label{Mpqring}
\mathring M^{q,\lambda} =\bket{  f\in L^q_\loc(\R^3), \quad
\lim_{|x|\to \infty} \frac1{|x|^\la}\int_{B(x,  |x|/2)}|f|^q =0}.
}

Global existence was shown in $\mathring M^{2,\lambda}$ for $0\le \la \le 2$ in \cite{BKT}. This built upon a local existence result in \cite{BK}. Although details are only given for $\la=2$, the other cases $\la<2$ can be proved in the same way. Observe that $L^r(\R^3)\subset \BKT$ if and only if $r \le 6$. This problem was studied in 2D by Basson \cite{Basson}.
We now consider intermediate spaces between $E^2$ and $M^{2,\lambda}$. They
must be subsets of $L^2_\loc(\R^3)$ so that the local energy inequality may be valid up to time zero, and we may get a priori estimates in terms of the initial data.

\begin{defn}\label{Fpalla.def}
Let $1\le p<\infty$,
 $0\le \al \le 1$ and $0\le \la<\infty$. Define $F^p_{\al,\la}$ to be the space of $f\in L^p_\loc(\R^3)$ with finite norm $\norm{f}_{F^p_{\al,\la}(R)}$ where for any $R\ge 1$,
\EQ{\label{Fal.def}
\norm{f}_{F^p_{\al,\la}(R)} = \sup_{x\in\R^3} \bke{\frac1{r_x^\la}\int_{B(x, r_x)} |f|^p }^{1/p}, \quad r_x =\frac 1 8\max\{R,|x|\}^\al.
}
We denote by
 $F^p_{\al,\la,\si}$ the space of divergence-free vector fields in $\R^3$ with all components in $F^p_{\al,\la}$.
Furthermore, we denote by
$\mathring F^p_{\al,\la}$ and $\mathring F^p_{\al,\la,\si}$
the closure of $C^\infty_c(\R^3)$ and $C^\infty_{c,\si}(\R^3)=\{ \zeta\in C^\infty_c(\R^3;\R^3), \div \zeta=0\}$ in $F^p_{\al,\la}$-norm, respectively.
\end{defn}

\emph{Comments on Definition \ref{Fpalla.def}:}
\EN{
\item We will show in Lemma \ref{vanishing.initial.data} that finiteness of $\norm{f}_{F^p_{\al,\la}(R)}$ implies finiteness of $\norm{f}_{F^p_{\al,\la}(\rho)}$ for any $R,\rho\ge1$. We will show in Lemma \ref{F_hatspaces} that the space
$\mathring F^p_{\al,\la,\si}$ is the same as the space of diverence-free vector fields with all components in $\mathring F^p_{\al,\la}$.

\item Note that when $\al=0$, $r_x=1/8$, for any $\la\ge0$,
\[
F_{0,\la}^2  = L^2_\uloc, \quad \mathring F_{0,\la}^2  =E^2,
\]
and when $\al=1$, $r_x=\frac 1 8\max(R,|x|)$,
\[
F_{1,\la}^2  = M^{2,\la}, \quad \mathring F_{1,\la}^2  = \mathring M^{2,\la}.
\]
If $\la=0<\al$, the space $F_{\al,0}^2$ is strictly contained in $L^2_\uloc$ as its norm involves bigger balls.
We do not consider $\al>1$, in which case the ball $B(x,r_x)$ will cover the origin for $|x|$ large enough. Hence it is not localized around $x$.

\item For fixed $\al$, $\la$ and $R$, the family of balls $\{B(x, r_x): \ x \in \R^3\}$ has the important property that, if $B(x, r_x) \cap B(y, r_y)$ is nonempty, then $r_x$ and $r_y$ are comparable, no matter whether $|x| \lec R$ or $|x| \gg R$.

\item Simple examples show that $F_{\al,\la}^p$ is not a subset of either $L^2_\uloc$ or $M^{2,\lambda}$ for $0<\al<1$,
see Examples \ref{exampleS2a} and \ref{exampleS2b}. Hence $F_{\al,\la}^p$ is
a \emph{new} solution class. %
}

We now introduce the notion of local energy solutions, similarly to \cite{BK,BKT}.
Unlike \cite{LR,KiSe,KwTs}, the following definition does not assume uniform in space bounds of the local energy.
For a given ball $B$ of radius $r$, let $B^*$ and $B^{**}$ denote concentric balls of radii $\frac 98r$ and $\frac 54r$, respectively.

\begin{defn}
  A vector field $u\in L^2_{\loc}(\R^3\times [0,T))$, $T>0$, is a \textbf{local energy solution} to \eqref{eq.NSE} with initial data $u_0\in L^2_{\loc}(\R^3)$ if the following holds:
  \begin{itemize}
    \item $u\in\cap_{R>0}L^{\infty}(0,T;L^2(B_R(0))),~\nabla u\in L^2_{\loc}(\R^3\times[0,T))$,
    \item there is $p\in L^{\frac32}_{\loc}(\R^3\times[0,T))$ such that ${u,p}$ is NSE solution in the sense of distributions,
    \item For all compacts $K\subset\R^3$ we have $u(t)\rightarrow_{t\rightarrow0} u_0$ in $L^2(K)$,
    \item $u$ is a Caffarelli-Kohn-Nirenberg solution, for all $\phi \in C^{\infty}_0(\R^3\times (0,T)),~\phi\ge0,$
    \EQ{
    2\int\int|\nabla u|^2\phi ~dx~dt\le\int\int |u|^2(\partial_t\phi+\Delta \phi)~dx~dt+\int\int(|u|^2+2p)(u\cdot \nabla\phi)~dx~dt,
    }
    \item the function $t\mapsto\int u(x,t)w(x)~dx$ is continuous on $[0,T)$ for any $w\in L^2(\R^3)$ with a compact support,
    \item for every ball $B\subset \R^3$, there exists $p_B(t)\in L^{\frac 32}(0,T)$ such that  for $x\in B^*$ and $0<t,T$,
\EQS{
\label{p.dec}
p(x,t)-p_B(t) &=   - \frac 13 |u|^2(x,t) +\pv \int_{ y\in B^{**}}  K_{ij}(x-y)   u_i u_j(y,t)\,dy
\\&\quad
+  \int_{ y\notin B^{**}}  \bkt{ K_{ij}(x-y) - K_{ij}(x_B - y  )}  u_i u_j(y,t)\,dy,
}
    where $x_B$ is the center of $B$ and $K_{ij}=\partial_i\partial_j(4\pi|y|)^{-1}$.
  \end{itemize}

It is a local energy solution for $0\le t<\infty$ if it is a local energy solution for $0\le t<T$ for any $T<\infty$.
\end{defn}

The following is our main theorem on the local and global existence of local energy weak solutions in $F^2_{\al,\la}$.

\begin{thm} \label{mainthm}
Assume
\EQ{\label{alla-cond}
0\le \al\le 1, \quad 0\le \la\le 2.
}
For any divergence-free initial data $u_0\in \mathring F^2_{\al,\la}$, there exist $T_+>0$ (with an explicit lower bound) and a local energy solution $(u,p)$ to the Navier-Stokes equation \eqref{eq.NSE} with initial data $u_0$ for $t \in (0,T_+)$ such that $u(t) \in C(0,T; F^2_{\al,\la}(R))$ for any $0<T<T_+$ and $1<R<\infty$.

If we further assume
\EQ{\label{vanishing}
\lim_{\rho\to \infty} \rho^{\al(\la-2)} \|u_0\|^2_{F^{2}_{\al,\lambda}(\rho)} =0
}
when $(3-\la)\al<1$, $($no condition when $(3-\la)\al\ge 1)$, then we can take $T_+=\infty$, the solution is globally defined and satisfies the a priori bound \eqref{apriori}. %
\end{thm}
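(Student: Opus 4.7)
The plan is to follow the now-standard template for building local energy solutions in scale-weighted spaces, adapted to the new family $F^2_{\al,\la}$. One mollifies and truncates the initial data to produce a sequence of divergence-free $u_0^{(k)} \in L^2 \cap \mathring F^2_{\al,\la,\si}$; for each $k$ one invokes classical Leray--Hopf theory to obtain a global weak solution $u^{(k)}$; one then derives uniform-in-$k$ a priori bounds in $F^2_{\al,\la}$ on a common time interval; and finally passes to the limit via Aubin--Lions compactness and the pressure decomposition, arguing as in \cite{BK,BKT} that the limit satisfies all the defining properties of a local energy solution.

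The core of the argument is a localized energy estimate on an arbitrary ball $B=B(x_0,r_{x_0})$ drawn from the $F^2_{\al,\la}$ family. Fix $R\ge 1$, take a standard cutoff $\phi$ adapted to $B$, test the CKN-type inequality with $\phi$, and substitute the pressure decomposition \eqref{p.dec} with $B$ as the pivot ball. The local Calder\'on--Zygmund piece is controlled by $\|u\|_{L^3(B^{**})}^2$ via Ladyzhenskaya's interpolation between $\|u\|_{L^2(B^{**})}$ and $\|\nabla u\|_{L^2(B^{**})}$. For the far-field piece one uses
\Eq{ \abs{K_{ij}(x-y)-K_{ij}(x_B-y)} \lec \frac{r_{x_0}}{|x_B-y|^4}, }
dyadically decomposes $\R^3\setminus B^{**}$ into shells $A_k=\{2^kr_{x_0}\le |y-x_0|<2^{k+1}r_{x_0}\}$, and covers each shell by balls of the $F^2_{\al,\la}$ family, whose radii are comparable to $\max(R,2^k r_{x_0})^\al$ on $A_k$ by Comment~(3) of Definition~\ref{Fpalla.def}. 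A direct volume count shows that each shell contributes $(2^k r_{x_0})^{3(1-\al)+\al\la-4}\|u(t)\|^2_{F^2_{\al,\la}(R)}$ to the bound for $|p_{\far}(x,t)|/r_{x_0}$, and the geometric series converges because $-1+\al(\la-3)<0$ under \eqref{alla-cond}. Combining the terms yields a closed Bernoulli-type differential inequality for the localized energy, producing existence on a nontrivial interval $(0,T_+)$ with $T_+$ explicitly bounded below in terms of $\|u_0\|_{F^2_{\al,\la}(R)}$.

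The main technical obstacle is executing this estimate uniformly in $x_0$ while tracking the two regimes $|x_0|\lec R$ and $|x_0|\gg R$. In the former, the bookkeeping is close to the $L^2_\uloc$ analysis of \cite{KiSe,LR}; in the latter, it resembles the Herz-type analysis of \cite{BKT}; the comparability property in Comment~(3) is precisely what provides a uniform constant across the transition. Once the a priori bound is established, time continuity in $C(0,T;F^2_{\al,\la}(R))$ follows from weak continuity of $t\mapsto \int u(t)\,w$ together with equicontinuity of $\|u(t)\|_{L^2(B(x,r_x))}$ furnished by the energy inequality, in the spirit of \cite{LR}.

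For global existence in the critical range $(3-\la)\al<1$, the hypothesis \eqref{vanishing} enters as follows: given any prescribed $T>0$, choose $R$ so large that $\rho^{\al(\la-2)}\|u_0\|^2_{F^2_{\al,\la}(\rho)}$ is uniformly small for $\rho\ge R$; the Bernoulli-type inequality then propagates bounds on $[0,T]$ without blowup, and iteration yields $T_+=\infty$. In the complementary range $(3-\la)\al\ge 1$ the scaling exponents in the differential inequality already favor global propagation without any smallness assumption on the data, matching the critical exponent calculation in \cite{BKT}.
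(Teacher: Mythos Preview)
Your outline follows the paper's route closely: approximate $u_0$ by $L^2_\si$ data, take Leray--Hopf solutions, derive uniform $F^2_{\al,\la}$ a priori bounds from the localized energy inequality coupled with the pressure decomposition, and pass to the limit by compactness; global existence then follows from choosing the scale $R=2^n$ large enough that the nonlinear coefficient in the Gr\"onwall inequality is small, so that the local existence time $T_n\to\infty$.

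There is, however, a genuine gap in your far-field pressure estimate. You decompose $\R^3\setminus B^{**}$ into dyadic shells $A_k=\{2^kr_{x_0}\le|y-x_0|<2^{k+1}r_{x_0}\}$ and assert that the covering balls from the $F^2_{\al,\la}$ family have radii comparable to $\max(R,2^kr_{x_0})^\al$ on $A_k$. This is false: the radius at $y$ is $\tfrac18\max(R,|y|)^\al$, which depends on the distance of $y$ from the \emph{origin}, not from $x_0$. When $|x_0|\gg R$ and $2^kr_{x_0}\ll|x_0|$ one has $|y|\sim|x_0|$ for all $y\in A_k$, so $r_y\sim r_{x_0}$, not $(2^kr_{x_0})^\al$; for larger $k$ the shell may straddle the origin and the radii on a single $A_k$ are not mutually comparable at all. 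Comment~(3) after Definition~\ref{Fpalla.def} only gives local comparability of \emph{overlapping} balls, not uniformity across an $x_0$-centered annulus. Your volume count and the resulting exponent $(2^kr_{x_0})^{3(1-\al)+\al\la-4}$ are therefore unjustified, and summing them yields $|p_\far|\lesssim r_{x_0}^{\al(\la-3)}a_n$ rather than the needed $r_{x_0}^{\la-3}a_n$; the former is too weak to close \eqref{grad.est.1} when $\al<1$. The paper resolves this in Lemma~\ref{lemma.pressure} by decomposing relative to the origin (regions $|y|<2^{m-1}$, $|y|>2^{m+2}$ with $2^m\sim|x_0|$) and using $x_0$-centered (linearly spaced) annuli only in the intermediate zone $|y|\sim|x_0|$, where $r_y\sim r_{x_0}$ genuinely holds; that zone (the term $J_{2b}$) is what produces the dominant contribution $r_{x_0}^{\la-3}a_n$.

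A second omission: you assert that for $(3-\la)\al\ge1$ the scaling automatically forces $T_n\to\infty$. This is true only for the strict inequality, via Lemma~\ref{vanishing.initial.data}. On the borderline $(3-\la)\al=1$ the bound $\rho^{\al(\la-2)}\|u_0\|^2_{F^2_{\al,\la}(\rho)}\lesssim 1$ need not vanish, and a separate argument (Lemma~\ref{lem2.9}) exploiting the vanishing-at-infinity hypothesis $u_0\in\mathring F^2_{\al,\la}$ is required.
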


\emph{Comments on Theorem \ref{mainthm}:}

\EN{
\item It will follow from Lemma \ref{vanishing.initial.data} that for $\|u_0\|^2_{F^{2}_{\al,\lambda}(R=1)}=A$,
\[
\rho^{\al(\la-2)} \|u_0\|^2_{F^{2}_{\al,\lambda}(\rho)} \lec \rho^{\al(\la-2)}  \rho ^{\al(1-\al)(3-\la)}A
= (\rho^\al)^{1-(3-\la)\al}A.
\]
Hence \eqref{vanishing} is automatic if $(3-\la)\al>1$ (right region of Figure \ref{alla-cond-png}). It can be proved when $(3-\la)\al=1$ using $u_0\in \mathring F^2_{\al,\la}$, see Lemma \ref{lem2.9}. 
When $(3-\la)\al<1$ (left region of Figure \ref{alla-cond-png}), Example \ref{exampleS2b} gives functions $u_0\in \mathring F^2_{\al,\la}$ that fail \eqref{vanishing}, and hence \eqref{vanishing}
has to be assumed for global existence.

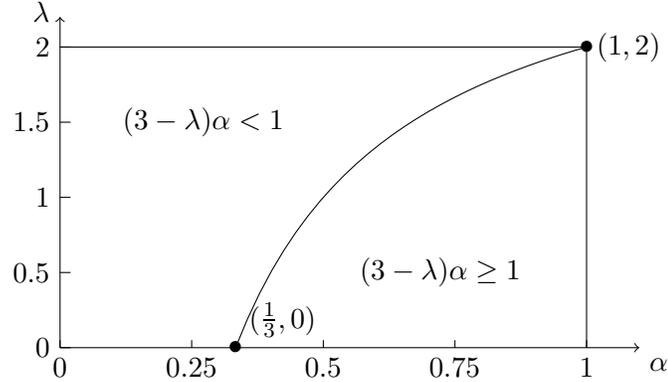
\begin{figure}[h]
\begin{center}
\begin{tikzpicture}[xscale=7, yscale=2]
  \draw[->] (0,0) -- (0,2.2);
  \draw[->] (0,0) -- (1.1,0);
    \draw[-] (1,0) -- (1,2);
  \draw[-] (0,2) -- (1,2);
  \node[above left] at (0,2.1) {$\lambda$};
  \node[below right] at (1.1,0) {$\alpha$};
  \foreach \x in {0,0.25,0.5,0.75,1} {
    \draw (\x,0) -- (\x,0.05);
    \node[below] at (\x,0) {\x};
  }
\foreach \x in {0,0.5,1,1.5,2} {
     \draw (0,\x) -- (0.02,\x);
     \node[left] at (0,\x) {\x};
  }

\node [black] at (1,2) {\textbullet};
\node[right] at (1,2) {$(1,2)$};
\node [black] at (1/3,0) {\textbullet};
\node[above right] at (.34,0) {$(\frac13,0)$};

  \clip (-0.25,0) rectangle (1.1,2.1);

\draw plot [
        samples=30,
        domain=.333:1
        ] (\x,{3-1/(\x)});

 \node[right] at (0.55,0.5) {$(3-\la)\al\ge1$};
 \node[right] at (0.1,1.5) {$(3-\la)\al < 1$};
\end{tikzpicture}

\end{center}
\caption{The $(\al,\la)$ parameter region for global existence: For $(\al,\la)$ in the left part we need to assume \eqref{vanishing}. For $(\al,\la)$ in the right part we don't.}
\label{alla-cond-png}
\end{figure}

\item
Suppose $u_0$ satisfies \eqref{vanishing}.
This implies
\[
\frac {\rho^{\al(\la - 2)}} {\rho^{\al \la }}\int_{B(0,\rho^\al)} |u_0|^2 \,dx  = \frac 1 {\rho^{2\al}}\int_{B(0,\rho^\al)} |u_0|^2 \,dx \to 0.
\]
Using \cite[Lemma 2.1]{BKT}, it implies $u_0\in \mathring M^{2,2}(\R^3)$. Time-global existence is already known in this space in  \cite{BKT}. The main merit of Theorem \ref{mainthm} is to construct global weak solutions in the more restrictive class $F^2_{\al,\la}$.

\item Our construction is based on a priori bounds, whose proof essentially uses the condition $\la \le 2$. See Lemma \ref{a.priori} and Remark \ref{la<2}.

\item Our proof works uniformly for $0<\al\le 1$ and $0 \le \la \le 2$. When $\al=0$ and $F^2_{0,\la}= L^2_\uloc$, our proof only gives a local in time solution and not the global existence, as the time upper limit $T_n$ defined \eqref{Tn.def} cannot go to infinity as $n \to \infty$ when $\al=0$.

\item If $\la=0<\al$, the space $F_{\al,0}^2$ is strictly contained in $L^2_\uloc$ as its norm involves bigger balls. Hence solutions in $F_{\al,0}^2$ have more control.

}

The proof is based on
\emph{a priori} estimates of the solution in $F^p_{\al,\la}(R)$ for a sequence $R_n=2^n$, $n \in \NN_0$. This type of estimate extends ideas in \cite{JiaSverak-minimal}.
We denote the set of balls
\EQ{\label{cCn.def}
\cB_n = \bket{ B(z,r_z): \ r_z=\frac18\max(2^n,|x|)^\al}.
}
Define
\EQ{\label{aln.def}
a_{n}(t) =   \| u(t)\|_{ F^2 _{\al,\la}(R=2^n)}^2
= \sup_{B\in \cB_n} \frac 1 {|B|^{\la/3}}\int_B |u(x,t)|^2\,dx
}
and
\EQ{\label{ben.def}
b_{n}(t)=\sup_{B\in \cB_n} \frac 1 {|B|^{\la/3}}\int_0^t\int_{B}|\nb u(x,s)|^2\,dx\,ds.
}
Our goal is to show a family of  a priori estimates
\EQ{
\esssup_{0<t<T_n} a_{n}(t) + b_{n}(T_n)  \le C a_{n}(0), \quad n \in \NN,
}
with $C$ independent of $n$, for some $T_n \to \oo$.

The strategy of the proof is following: we establish pressure estimates in spaces $F^p_{\al,\la}$ first. To do that we use the pressure decomposition formula \eqref{p.dec} for any ball $B$.

Next we separate pressure in several parts $p= p_\text{near}(x,t)+ p_\text{far}(x,t)$ and apply Calderon-Zygmund inequality combined with estimates of the solution over annulus $B_{2^n}\setminus B_{2^{n-1}}$. We use local energy inequality combined with Gronwall lemma and pressure estimate to get a priori bound for solutions in $F^p_{\al,\la}$. The first step would be to obtain the following inequality for norm $a_n(t)$,
 \EQN{
a_n (t) \lesssim a_n(0) +C_1(n)	 \int_0^t a_n(s)\,ds +C_2(n)	\int_0^t a_n^3 \,ds.
}
After that we apply Gronwall lemma to get uniform bound up to some time $T$ which will depend on norm $\|u_0\|_{F^p_{\al,\la}(R)}$ and $R$. This allows us to prove local in time existence for all parameters $\al,\la$ and under conditions \eqref{alla-cond} we get the global existence. The construction of local energy solution is done by approximation, where we first approximate the initial data with $L^2$ sequence $\{u^n_0\}$, for every $n$ we have global energy solutions $u^n$ corresponding to initial data $u_0^n$ and after applying a priori bounds in $F^p_{\al,\la}$ we can construct a subsequence of $\{(u^n,p^n)\}$ that will converge to local energy solution $(u,p)$.

\subsection{Eventual regularity}

The weighted local spaces considered in \cite{BT8,BKT,BKO} have a natural application to the problem of eventual regularity. It is classical that solutions in the Leray class eventually regularize in the sense that there exists a time $T$ so that the solution in view is smooth on $\R^3\times (T,\I)$. This follows  from the fact that, for all $T>0$,
\EQ{\label{ineq.Leray}
\int_0^T\int  |\nb u|^2\,dx\,dt \leq  \|u_0\|_2^2.
}
Since this holds as $T\to \I$, by Sobolev embedding the $L^6$ norm must become small at some large times. Since $u$ is also bounded in $L^2$, interpolation implies $u$ is small in $L^3$ at certain large times. The small-data global well-posedness theory in $L^3$ and weak-strong uniqueness together guarantee the solution becomes smooth for all large times.
On the other hand, eventual regularity  is \textit{not} known for local energy solutions. This is due to the lack of an estimate that extends to arbitrarily large times. Indeed, Lemari\'e-Rieusset's solutions, which have data in $L^2_\uloc$ have only a short time estimate \cite{LR,LR2}.  The smaller $M^{2,q}$ and $\mathring F^{2}_{\al,\la}$ spaces, on the other hand, allow for an estimate in Theorem \ref{uniform.bound} which extends up to arbitrarily large times for certain choices of parameters.
These \textit{a priori} bounds can be used as a substitute for \eqref{ineq.Leray} in proving eventual regularity, as has been explored in \cite{BT8,BKT,BKO}.  In \cite{BKT} eventual regularity is proven above a paraboloid for data in $\mathring M^{2,1}(\R^3)$. In \cite{BKO} corresponding estimates are carried out in $\mathring M^{2,q}(\R^3_+)$ where $q\leq 1$. In this case, the paraboloid region can be replaced by a larger algebraic region with exponents depending on $q$. As was known to the authors of \cite{BKO}, the theorem in \cite{BKO} can be improved in  $\mathring M^{2,q}(\R^3)$ yielding an algebraic region of the form
\[
\{(x,t):  |x|^{2q} \lesssim t \text{ and } t>M  \},
\]
for a non-universal constant $M=M(u_0)$.\footnote{A proof is not written down but it follows directly from the argument in \cite{BKO}.}  Note that  the classical eventual regularity result for Leray weak solutions would correspond to $q=0$.

\begin{figure}
\begin{center}
 \begin{tikzpicture}[scale=4]

  \draw[->] (0,0) -- (0,1.1);
  \draw[->] (0,0) -- (1.1,0);

  \node[above left] at (0,1.1) {$\lambda$};
  \node[below right] at (1.1,0) {$\alpha$};

  \foreach \x in {0,0.25,0.5,0.75,1} {
    \draw (\x,0) -- (\x,0.05);
    \draw (0,\x) -- (0.05,\x);
    \node[below] at (\x,0) {\x};
    \node[left] at (0,\x) {\x};
  }

  \clip (0,0) rectangle (1.1,1.1);

\draw plot [
        samples=30,
        domain=.333:1
        ] (\x,{.33/(\x)});

 \node[right] at (0.333,1) {$\mathring {\mathfrak F}^{2}_{\al\la,1}$};  %
\node [black] at (.333,1) {\textbullet};

\node at (.8,.8) {$\boxed{\al \la = 1/3}$};

 \node[below] at (1,.333) {$\mathring {\mathfrak F}^{2}_{1,\al\la}$};%
\node [black] at (1,.333) {\textbullet};

 \node at (.45,.45) {$\mathring {\mathfrak F}^{2}_{\al\neq 1,\la\neq1}$};

\end{tikzpicture}

\caption{The spaces $\mathring {\mathfrak F}^{2}_{\al,\la} $ when $\al\la =1/3$, which is fixed for illustrative purposes.  Note that $\mathring M^{2,\al\la}=\mathring {\mathfrak F}^{2}_{1,\al\la}$ by Lemma \ref{Mpq-centered}. These spaces imply eventual regularity in regions with the same algebraic bounds, up to  constants. Spaces are strictly increasing in $\la$. The conclusion of Theorem \ref{thm.ER} is new in all but the points  $(\al=1,\la=1/3)$.}
\label{figure.ER}
\end{center}
\end{figure}
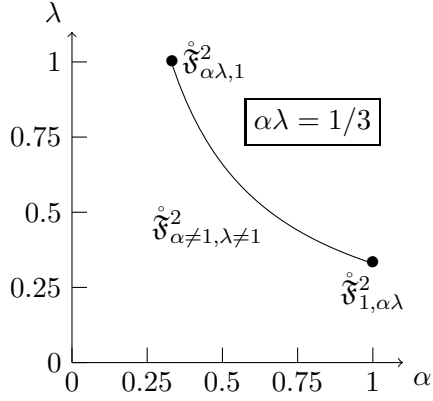

It is natural to ask what happens if we seek eventual regularity within the finer $F^2_{\al,\la}$ scale of spaces, noting that $M^{2,q}$ corresponds to $F^2_{1,q}$. For this we need to introduce new sub-classes of  our $F^2_{\al,\la}$ spaces. Let
\EQ{\label{Ffrak.def}
\mathfrak F^{2}_{\al,\la} := \{f \in F^{2}_{\al,\la}: \sup_{R\geq 1} \|f\|_{  F^{2}_{\al,\la}(R)  }<\I  \},
}
and
\EQ{\label{Ffrak0.def}
\mathring {\mathfrak F}^{2}_{\al,\la} := \{f \in \mathfrak F^{2}_{\al,\la}: \lim_{R\to \I}  \|f\|_{  F^{2}_{\al,\la}(R)  }=0  \}.
}
The latter is necessarily a subspace of $\mathring M^{2,\la}$, which can be easily seen using an alternative definition of the $M^{2,q}$ norm given in \cite{BKT}.

\begin{thm}[Eventual regularity]\label{thm.ER}
Assume that $ u_0 \in \mathring {\mathfrak F}^{2}_{\al,\la}$ where   $\al\in (0,1]$, $\la\in (0,1]$.   Let $(u,p)$ be a local energy solution with initial data $u_0$ on $\R^3\times(0,\I)$, satisfying%
\EQ{\label{thm.ERcond}
\esssup_{0<s<t}a_n(s)+b_n(t)< \infty, \quad \text{for all } t<\I,
}   for one fixed $n\in\NN_0$.
Then, there exists $M = M(u_0)$ so that $u$ is regular within a region of the form
\[
\{ (x,t)  :  |x|^{2 \al \la} \leq t, \, t\geq M \},
\]
where the suppressed constants depend on $\la$ and $\al$,  while $M$ is determined by the length scale $\rho$ at which $\|u_0\|_{F^2_{\al,\la}(\rho)}$ becomes small compared to a universal small constant. {Additionally, for $(x,t)$ in the preceding region, $|u(x,t)|\lesssim t^{-1/2}$.}
\end{thm}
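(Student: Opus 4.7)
The plan is to adapt the eventual-regularity strategy of \cite{BKT,BKO} to the $F^2_{\al,\la}$ scale by verifying, at every point $(x_0,t_0)$ in the claimed region, the scale-invariant smallness hypothesis of a Caffarelli-Kohn-Nirenberg (CKN) $\epsilon$-regularity theorem: there is an absolute $\epsilon_*>0$ such that if
\[
\frac{1}{r}\esssup_{t_0-r^2 < t < t_0} \int_{B(x_0, r)} |u|^2\, dx \;+\; \frac{1}{r} \int_{Q_r(x_0, t_0)} |\nabla u|^2 \, dx\, dt \;<\; \epsilon_*,
\]
then $|u|\lec 1/r$ on $Q_{r/2}(x_0, t_0)$. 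Once this smallness is produced with $r\sim\sqrt{t_0}$, the bound $|u(x_0,t_0)|\lec t_0^{-1/2}$ is immediate and full regularity follows by standard parabolic bootstrapping.

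Smallness is extracted by first fixing a small $\epsilon$ (to be chosen in terms of $\epsilon_*,\al,\la$) and using the vanishing hypothesis $u_0\in\mathring{\mathfrak F}^2_{\al,\la}$ to select a large length scale $R_0\geq\max(2^n, 8^{1/\al})$ with $\|u_0\|_{F^2_{\al,\la}(R)}<\epsilon$ for every $R\geq R_0$. The global a priori estimate of Theorem~\ref{uniform.bound}, applied at initialization scale $R_0$, then delivers
\[
\esssup_{0<s<\infty}\|u(s)\|^2_{F^2_{\al,\la}(R_0)} + \int_0^{\infty}\|\nabla u(\tau)\|^2_{F^2_{\al,\la}(R_0)}\,d\tau \;\lec\; \epsilon^2.
\]
Introduce a large constant $C_1=C_1(\al,\la)$ absorbing the suppressed constants in the theorem's region, set $M = C_1 R_0^{2\al\la}$, and consider $(x_0,t_0)$ with $C_1|x_0|^{2\al\la}\leq t_0$ and $t_0\geq M$. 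Put $r_* = \max(R_0, |x_0|)^\al / 8$ (the Herz radius at $x_0$ in the $F^2_{\al,\la}(R_0)$-norm) and $r = \min(r_*, \sqrt{t_0}/2)$; note $B(x_0,r)\subset B(x_0,r_*)$ and $Q_r(x_0,t_0)\subset\R^3\times(0,t_0)$.

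The key bookkeeping identity is $r_*^\la\leq\sqrt{t_0}/(\sqrt{C_1}\cdot 8^\la)$, which holds because $\max(R_0,|x_0|)^{2\al\la}\leq t_0/C_1$: directly from the region condition when $|x_0|\geq R_0$, and from $t_0\geq M=C_1 R_0^{2\al\la}$ when $|x_0|<R_0$. Since $B(x_0,r)\subset B(x_0,r_*)$, the Herz bound produces
\[
\frac{1}{r}\int_{B(x_0, r)}|u(t)|^2\,dx \;\leq\; \frac{r_*^\la}{r}\,\|u(t)\|^2_{F^2_{\al,\la}(R_0)},
\]
and analogously for $|\nabla u|^2$ integrated over $Q_r(x_0,t_0)$. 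The prefactor $r_*^\la/r$ is $r_*^{\la-1}\leq 1$ when $r=r_*$ (using $r_*\geq 1$) and at most $2/(\sqrt{C_1}\cdot 8^\la)$ when $r=\sqrt{t_0}/2$. Combined with the displayed a priori bound, the full CKN quantity is $\lec\epsilon^2$; choosing $\epsilon$ small in terms of $\epsilon_*,\al,\la,C_1$ forces it below $\epsilon_*$ uniformly over the region, so CKN yields $|u(x_0,t_0)|\lec 1/r\lec t_0^{-1/2}$.

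The main obstacle is the case $\la=1$, where no geometric gain is available from the ratio $r_*^\la/r$ and all smallness must come from the smallness of $\|u_0\|_{F^2_{\al,\la}(R_0)}$ propagated uniformly in time by Theorem~\ref{uniform.bound}; this is precisely the paraboloid regime treated in \cite{BKT} for $\mathring M^{2,1}=\mathring{\mathfrak F}^2_{1,1}$. For $\la<1$ the decay factor $r_*^{\la-1}$ (or the bound $2/(\sqrt{C_1}\cdot 8^\la)$) supplies additional slack, enlarging the region to the algebraic set $|x|^{2\al\la}\leq t$ claimed in the statement. A secondary, more technical point is to ensure that the a priori bound at scale $R_0$ truly is uniform in time with a constant $\lec\epsilon^2$; this is a consequence of the construction in Theorem~\ref{mainthm}, together with the fact that the global bound of Theorem~\ref{uniform.bound} has constants linear in the initial $F^2_{\al,\la}(R_0)$-norm for $R_0$ large.
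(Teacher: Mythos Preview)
Your approach has a genuine gap where you assert that Theorem~\ref{uniform.bound} delivers a time-global bound at the single scale $R_0$. That theorem only controls $a_n$ and $b_n$ on $(0,T_n)$, and for $\la\le 1$ the formula~\eqref{Tn.def} gives $T_n\sim 2^{2n\al}$ (the first term dominates once $\|u_0\|_{F^2_{\al,\la}(2^n)}$ is small). Writing $R_0=2^n$, the bound you invoke is therefore only available for $t\lesssim R_0^{2\al}$, not for all $t<\infty$. Your closing remark that this is a ``secondary technical point'' settled by Theorem~\ref{mainthm} is not correct either: the a priori estimate~\eqref{apriori} in that theorem is again valid only up to $T_n$ at each scale $n$ separately, and nothing in the paper propagates smallness of $\|u(t)\|_{F^2_{\al,\la}(R_0)}$ past $t\sim R_0^{2\al}$ at a fixed $R_0$. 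A visible symptom of the same defect is your decay claim: when $|x_0|<R_0$ you have $r\le r_*=R_0^\al/8$ independent of $t_0$, so $|u|\lesssim 1/r$ cannot give $|u(x_0,t_0)|\lesssim t_0^{-1/2}$ once $t_0\gg R_0^{2\al}$.

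The paper's proof avoids this by letting the scale move with time: for each $t$ one picks $n$ so that $t\sim 2^{2n\al}$, which simultaneously guarantees $t\le T_n$ (so Theorem~\ref{uniform.bound} applies) and $\sqrt t\le r_x$ (so the parabolic cylinder fits inside a ball of $\mathcal B_n$). Smallness then comes from $a_n(0)=\|u_0\|_{F^2_{\al,\la}(2^n)}^2\to 0$ as $n\to\infty$, which is precisely the hypothesis $u_0\in\mathring{\mathfrak F}^2_{\al,\la}$. The regular sets obtained for successive $n$ are patched together by choosing $\sigma$ in Lemma~\ref{thrm.epsilonreg} so that consecutive time slabs overlap. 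Note also that the paper uses the $L^3$--pressure form of $\epsilon$-regularity (Lemma~\ref{thrm.epsilonreg}) together with the pressure estimate Lemma~\ref{lemma.pressure}; the pressure-free energy criterion you quote is not established in the paper, so even the local step of your argument would need additional justification.
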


The condition \eqref{thm.ERcond} is \eqref{eq4.6} and is needed to ensure that the \textit{a priori} bound \eqref{apriori} holds. This is satisfied by the solutions constructed in Theorem \ref{mainthm}.

The region of regularity here is \textit{already established} for $\mathring M^{2,\al \la}$-solutions. Our result identifies larger classes which have the same regularity region, insofar as the degree of the algebraic boundary is concerned, as the $\mathring M^{2,\al \la}$ case. To support this, in Lemmas \ref{Mpq-centered} and \ref{Remark.ER} we show that if $\al \la = q\in (0,1]$, then
\[
\mathring {\mathfrak F}^{2}_{1,q} = \mathring M^{2,q }\subsetneq \mathring {\mathfrak F}^{2}_{\al\neq 1,\la\neq 1} \subsetneq \mathring {\mathfrak F}^{2}_{q,1}.
\]
The largest space in Theorem \ref{thm.ER} which admits a regularity region with $ |x|^{2 \al \la} \lesssim t$ for large $t$ is, therefore, $\mathring {\mathfrak F}^{2}_{q,1}$.
This is illustrated in Figure \ref{figure.ER}.

\bigskip
The rest of the paper is structured as follows:
In Section \ref{sec2} we study properties of the intermediate spaces $F^{p}_{\al,\lambda}$.
In Section \ref{sec3} we show the pressure estimate.
In Section \ref{sec4} we show the a priori bounds.
In Section \ref{sec5} we prove the main theorem on the local and global existence of weak solutions.
Finally in Section \ref{sec6} we study eventual regularity.

\section{The intermediate spaces}\label{sec2}
In this section we explore
properties of spaces $F^{p}_{\al,\lambda}$ defined in Definition \ref{Fpalla.def}.
For $\rho>0$, we denote the ball and annulus
\[
B_\rho=B(0,\rho),\quad
A_\rho = B_{2\rho} \setminus B_\rho.
\]

\begin{lem}[annulas covering]\label{annulas-covering}
Fix $\al \in [0,1]$, $R \ge 1$, and let $r_x = \frac18 \max(R,|x|)^{\al}$ for $x \in \R^3$.  For any $\rho\ge R$, the annulus $A_\rho$ can be covered by $N(\rho)$ balls $B(x_k,r_{x_k})$, $x_k \in A_\rho$, $k=1,\ldots,N(\rho)$,
\[
A_\rho \subset \cup_{k=1}^{N(\rho)}  B(x_k,r_{x_k}),
\quad
\text{with}
\quad
N(\rho) \le C \rho^{3-3\al},
\]
where $C$ is independent of $R$ and $\rho$. Similarly, the ball $B_R$ can be covered by $N(R)$ balls $B(x_k,r_{x_k})$, $x_k \in B_R$, $k=1,\ldots,N(R)$, with $N(R) \le C R^{3-3\al}$.
\end{lem}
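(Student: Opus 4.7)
The plan is a straightforward volume/grid covering argument, with the key simplification being that throughout $A_\rho$ (resp.\ throughout $B_R$) all of the radii $r_x$ are comparable to $\rho^\al$ (resp.\ equal to $\tfrac18 R^\al$). Concretely, for $x\in A_\rho$ the assumption $\rho\ge R$ gives $|x|\ge\rho\ge R$, so $\max(R,|x|)=|x|$ and
\[
\tfrac18\rho^\al \ \le\ r_x=\tfrac18|x|^\al\ \le\ \tfrac14\rho^\al,
\]
while for $x\in B_R$ one has $r_x=\tfrac18 R^\al$ identically. The task therefore reduces to covering a set of volume $\lesssim \rho^3$ (resp.\ $\lesssim R^3$) by balls of a single characteristic radius $\sim \rho^\al$ (resp.\ $R^\al$) with centers in the set.

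For the annulus, I would tile $\R^3$ by axis-aligned closed cubes of side $s=c\rho^\al$ with $c=\tfrac{1}{8\sqrt3}$, chosen so that the cube diameter $s\sqrt3$ does not exceed the smallest admissible radius $\tfrac18\rho^\al$ available in $A_\rho$. For every cube $Q$ with $Q\cap A_\rho\neq \emptyset$ I select any $x_Q\in Q\cap A_\rho$; then $Q\subset B(x_Q,s\sqrt3)\subset B(x_Q,r_{x_Q})$, and the union of these balls covers $A_\rho$. To bound the number of selected cubes, observe each is contained in the thickened annulus $\{x:\rho-s\sqrt3\le|x|\le 2\rho+s\sqrt3\}$, whose volume is $\lesssim \rho^3$ uniformly in $\rho\ge R\ge 1$ since $s\sqrt3\le\tfrac18\rho^\al\le\tfrac18\rho$. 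Dividing by the cube volume $s^3\sim \rho^{3\al}$ yields $N(\rho)\lesssim \rho^{3-3\al}$ with a constant independent of $R$ and $\rho$.

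For $B_R$ the argument is identical with $R$ in place of $\rho$: tile by cubes of side $\tfrac{1}{8\sqrt3}R^\al$, pick one center $x_Q\in Q\cap B_R$ per meeting cube, note $Q\subset B(x_Q,r_{x_Q})$ by the same diameter comparison, and count at most $C\, R^3/R^{3\al}=CR^{3-3\al}$ such cubes.

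I do not foresee a substantive obstacle — this is genuinely a routine volume estimate once one notices the uniform comparability of $r_x$ on the region being covered. The only points that warrant a moment's care are (i) picking the cube side small enough to be swallowed by the smallest $r_x$, which determines the constant $c$ above, and (ii) keeping the final constant independent of $R$ and $\rho$, which is automatic from the two-sided bound $r_x/\rho^\al\in[\tfrac18,\tfrac14]$ on $A_\rho$ and the corresponding equality on $B_R$.
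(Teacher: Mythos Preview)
Your proposal is correct and follows essentially the same grid-covering-plus-volume-count argument as the paper: both tile by cubes of side comparable to $\rho^\al$ (the paper uses the lattice $\tfrac1{16}\rho^\al\,\ZZ^3$, you use cubes of side $\tfrac1{8\sqrt3}\rho^\al$), pick centers in $A_\rho$, and bound the number of cubes by $C\rho^3/\rho^{3\al}$. The only cosmetic difference is that the paper selects lattice vertices as centers while you select an arbitrary point of $Q\cap A_\rho$; your choice is slightly cleaner since it sidesteps having to check a cube vertex lands in the annulus.
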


\begin{proof} Denote the lattice
\[
\La=r\ZZ^3, \quad r := \tfrac1{16} \rho^{\al}\le \tfrac12 \inf_{x \in A_\rho} r_x.
\]
Any point $y \in A_\rho$ belongs to at least one cube of side length $r$ and vertices in $\La$, and at least one of these vertices, denoted as $x$, is in $A_\rho$. Thus $|y-x| \le  \sqrt 3 r$ and hence
\[
y\in \overline B(x, \sqrt 3 r) \subset B(x,r_x).
\]
We choose $x_k$, $k=1,\ldots,N(\rho)$, to be all points in $\La \cap A_\rho$.
Let $m = \lceil 2\rho/r \rceil \le C\rho^{1-\al}$. We have $\La \cap A_\rho \subset \La_\rho$, where $\La_\rho = \{ r(z_1,z_2,z_3): z_i \in \ZZ, |z_i| \le m, i=1,2,3\}$. Hence
\[
N(\rho) \le \# \La_\rho=(2m+1)^3 \le C \rho^{3-3\al}.
\]
The proof for the ball $B_R$ is similar.
\end{proof}

\begin{lem}\label{vanishing.initial.data}
 Let $1\le p<\infty$, $0 \le \al \le 1$ and $0 \le \la < 3$.
 Let $1\le R < \rho<\infty$. Then $\norm{f}_{F^{p}_{\al,\lambda}(R)}<\infty$ if and only if $\norm{f}_{F^{p}_{\al,\lambda}(\rho)}<\infty$, and
\EQ{\label{lem2.1}
C_1 \norm{f}^p_{F^{p}_{\al,\lambda}(R)} \le \norm{f}^p_{F^{p}_{\al,\lambda}(\rho)}
 \le C_2 \norm{f}^p_{F^{p}_{\al,\lambda}(R)}
}
for $C_1=\bke{\frac \rho R}^{-{\al \la}}$ and $C_2 = C \bke{\frac \rho{R+\rho^\al}}^{\al(3-\la)}\lec \rho ^{\al(1-\al)(3-\la)}$.
\end{lem}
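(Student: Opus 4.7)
I establish the two inequalities in \eqref{lem2.1} separately. For the \emph{lower bound}, the key pointwise fact is $r_{x,R}\le r_{x,\rho}$ (since $R\le\rho$), so $B(x,r_{x,R})\subset B(x,r_{x,\rho})$ and
\[
\frac{1}{r_{x,R}^\la}\int_{B(x,r_{x,R})}|f|^p \le \bke{\frac{r_{x,\rho}}{r_{x,R}}}^\la \frac{1}{r_{x,\rho}^\la}\int_{B(x,r_{x,\rho})}|f|^p.
\]
The ratio $r_{x,\rho}/r_{x,R}$ attains its maximum $(\rho/R)^\al$ on $\{|x|\le R\}$, so taking the supremum in $x$ gives $\norm{f}^p_{F^p_{\al,\la}(R)}\le(\rho/R)^{\al\la}\norm{f}^p_{F^p_{\al,\la}(\rho)}$, i.e., $C_1=(\rho/R)^{-\al\la}$.

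For the \emph{upper bound}, fix $x$. The case $|x|\ge\rho$ is immediate since $r_{x,\rho}=r_{x,R}$, so assume $|x|<\rho$, giving $r_{x,\rho}=\rho^\al/8$. Set $E:=B(x,\rho^\al/8)$. The plan is to cover $E$ by $R$-scale balls $B(y_j,r_{y_j,R})$ with $y_j\in E$ and use
\[
\int_E|f|^p\le\norm{f}^p_{F^p_{\al,\la}(R)}\sum_j r_{y_j,R}^\la;
\]
the target is a covering estimate $\sum_j r_{y_j,R}^\la\lec\rho^{3\al}(R+\rho^\al)^{\al(\la-3)}$, which after dividing by $r_{x,\rho}^\la=(\rho^\al/8)^\la$ delivers $C_2=C(\rho/(R+\rho^\al))^{\al(3-\la)}$.

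To obtain this covering estimate I split on the relative sizes of $|x|$, $R$, and $\rho^\al$. When $|x|\gec R+\rho^\al$, every $y\in E$ has $|y|\sim|x|>R$, so $r_{y,R}\sim|x|^\al/8$ and a lattice packing uses $\lec(\rho/|x|)^{3\al}$ such balls, yielding $(\rho/|x|)^{\al(3-\la)}\norm{f}^p_{F^p_{\al,\la}(R)}\lec(\rho/(R+\rho^\al))^{\al(3-\la)}\norm{f}^p_{F^p_{\al,\la}(R)}$. When $|x|\lec R+\rho^\al$, we have $E\subset B(0,c(R+\rho^\al))$, and I further split on $R$ versus $\rho^\al$. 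For $R\le\rho^\al$, apply Lemma~\ref{annulas-covering} to $B_R$ and the dyadic annuli $A_{2^jR}$ up to $2^KR\sim\rho^\al$, summing the geometric series $\sum_j(2^jR)^{3-\al(3-\la)}\lec(R+\rho^\al)^{3-\al(3-\la)}$; division by $\rho^{\al\la}$ yields $\rho^{\al(1-\al)(3-\la)}\norm{f}^p_{F^p_{\al,\la}(R)}\sim(\rho/(R+\rho^\al))^{\al(3-\la)}\norm{f}^p_{F^p_{\al,\la}(R)}$. For $R>\rho^\al$, pack $E$ directly by $\lec(\rho/R)^{3\al}$ balls of uniform radius $R^\al/8$, producing $(\rho/R)^{\al(3-\la)}\norm{f}^p_{F^p_{\al,\la}(R)}\sim(\rho/(R+\rho^\al))^{\al(3-\la)}\norm{f}^p_{F^p_{\al,\la}(R)}$. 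Finally $R+\rho^\al\ge\rho^\al$ yields $C_2\lec\rho^{\al(1-\al)(3-\la)}$.

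The main obstacle is achieving the sharp denominator $R+\rho^\al$ in $C_2$ uniformly across regimes: no single covering strategy works in both sub-cases. A uniform small-ball cover gives only $(\rho/R)^{\al(3-\la)}$, too loose when $R\ll\rho^\al$, and only the annular decomposition from Lemma~\ref{annulas-covering} recovers the correct $\rho^\al$ in the denominator. The geometric-series step uses $3-\al(3-\la)>0$, which holds strictly except at the corner $(\al,\la)=(1,0)$; that case can be handled separately with a logarithmic correction.
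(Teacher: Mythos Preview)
Your proof is correct and follows essentially the same strategy as the paper: the lower bound via the pointwise inequality $r_{x,R}\le r_{x,\rho}$, and the upper bound via covering $B(x,\rho^\al/8)$ by $R$-scale balls, using a uniform-radius cover when the $R$-radii are all comparable and the annular decomposition of Lemma~\ref{annulas-covering} when $R<\rho^\al$. Your case thresholds differ slightly from the paper's ($|x|\gec R+\rho^\al$ versus the paper's $|x|>2\rho_x=\rho^\al/4$), but the arguments are interchangeable; your closing remark about the corner $(\al,\la)=(1,0)$ is in fact more careful than the paper, which tacitly assumes $3-\al(3-\la)>0$ when summing its geometric series.
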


\emph{Remark.}\ This lemma shows that the space $F^{p}_{\al,\lambda}$ defined by the $F^{p}_{\al,\lambda}(R)$-norm does not depend on the choice of $R$.
Similar bound \eqref{lem2.1} hold for $\la\ge3$, but the formula for $C_2$ will change.%

\begin{proof} Denote $R_x=\frac18\max(R,|x|)^\al$ and $\rho_x=\frac18\max(\rho,|x|)^\al$. Note $R_x\le \rho_x \le (\rho/R)^\al R_x$.

Suppose $\norm{f}_{F^{p}_{\al,\lambda}(\rho)}<\infty$. For any $x\in\R^3$,
\[
\frac{1}{R_x^{\la}}\int_{B(x,R_x)}|f|^p\,dy\le \frac{\rho_x^{\la}}{R_x^{\la}}\frac{1}{\rho_x^{\la}}\int_{B(x,\rho_x)}|f|^p\,dy \le \bke{\frac \rho R}^{\al \la} \norm{f}_{F^{p}_{\al,\lambda}(\rho)}^p.
\]
This shows the first inequality of \eqref{lem2.1} with $C_1 = \bke{\frac \rho R}^{-{\al \la}}$.

Suppose now $\norm{f}_{F^{p}_{\al,\lambda}(R)}<\infty$. We may assume $\norm{f}_{F^{p}_{\al,\lambda}(R)}=1$.

If $|x|>\rho$, $\rho_x=R_x$, and
$
\frac{1}{\rho_x^{\la}}\int_{B(x,\rho_x)}|f|^p\,dy \le \norm{f}_{F^{p}_{\al,\lambda}(R)}^p=1$.

It remains to consider $|x|<\rho$ with $\rho_x= \frac 18 \rho^\al$.
If $2\rho_x < |x| < \rho$, all $y \in B(x,\rho_x)$ has $|y| \sim |x|$ and hence $R_y \sim R_x$. Hence $B(x,\rho_x)$ can be covered by $K$ balls $B(y_k,R_{y_k})$ with $K \lec (\rho_x/R_x)^3$, and
\EQS{\label{0920}
\frac{1}{\rho_x^{\la}}\int_{B(x,\rho_x)}|f|^p\,dy
&\le \frac{1}{\rho_x^{\la}}\sum_{k=1}^K \int_{B(y_k, R_{y_k})}|f|^p\,dy
\le \frac{1}{\rho_x^{\la}}\sum_{k=1}^K R_{y_k}^\la
\lec (\rho_x/R_x)^{3-\la}
\\
&\approx \bke{\frac \rho{R+|x|}}^{\al(3-\la)}
\lec \bke{\frac \rho{R+\rho^\al}}^{\al(3-\la)}.
}

If $|x|<2\rho_x =  \frac 14 \rho^\al$, then $B(x,\rho_x)\subset B(0,\rho^\al)$. In the case $R>\rho^\al$,
$R_y = \frac 18 R^\al$ in $B(0,\rho^\al)$, and $B(x,\rho_x)$ can be covered by $K$ balls $B(y_k,R_{y_k})$ with $K \lec (\rho_x/R_x)^3$. We have the same bound \eqref{0920}.

In the case $R<\rho^\al$, we cover $B(0,\rho^\al)$ by $B_R(0)$ and $K$ annulus
\[
\mathcal A_k = \{ y \in \R^3: 2^{k-1} R \le |y|< 2^{k}R\}, \quad k=1,\ldots,K,
\]
with $2^{K-1} R \le \rho^\al< 2^{K}R$. Each $\mathcal A_k$ can be covered by $N_k$ balls $B_{k,j}=B(y_{k,j}, R_{y_{k,j}})$ with $y_{k,j} \in \mathcal A_k$, $R_{y_{k,j}} \sim ( 2^k R)^\al $, and $N_k \lec ( 2^k R)^{3(1-\al)} $. Similarly, $B_R(0)$ can be covered by $N_0$ balls $B_{0,j}$ with $j \le N_0 \sim N_1$.
Thus
\EQN{
\frac{1}{\rho_x^{\la}}\int_{B(x,\rho_x)}|f|^p\,dy
&\le \frac{1}{\rho_x^{\la}}\sum_{k=0}^K \sum_{j=1}^{N_k} \int_{B_{k,j}} |f|^p\,dy
\le \frac{1}{\rho_x^{\la}}\sum_{k=0}^K \sum_{j=1}^{N_k}  ( 2^k R)^{\al \la}
\\
&\lec \frac{1}{\rho^{\al \la}}\sum_{k=0}^K (2^k R )^{\al \la+3 -3\al}
\approx \frac{1}{\rho^{\al \la}}(\rho^{\al} )^{\al \la+3 -3\al}\approx \rho ^{\al(1-\al)(3-\la)}.
}
As $R+\rho^\al \sim \rho^\al$, this bound is the same as \eqref{0920}.

We have shown the second inequality of \eqref{lem2.1} with
$C_2 = C \bke{\frac \rho{R+\rho^\al}}^{\al(3-\la)}$.
\end{proof}

\emph{Remark.} In particular, $C_2 \lec \rho ^{\al(1-\al)(3-\la)}$.
If $\al=1$, we have $C_2 \lec 1$. It is consistent with \cite[Lemma 2.1]{BKT}.
As a result, for $\al \in [0,1]$,
\[
\rho^{\al(\la-2)} \|u_0\|^2_{F^{2}_{\al,\lambda}(\rho)} \lec \rho^{\al(\la-2)}  \rho ^{\al(1-\al)(3-\la)}
= (\rho^\al)^{1-3\al+\al\la}. %
\]
Hence the vanishing condition
\eqref{vanishing} is automatic when $1-3\al+\al\la<0$,
 and has to be assumed otherwise, for global existence.

\begin{lem}[Inclusion]\label{inclusion}
Let $1\le p<\infty$, $0 \le \al \le 1$ and $0 \le \la,\mu < 3$.%
We have

(a)
$ L^p(\R^3)\subset F^p_{\al,\la}$, and $ L^p(\R^3)\subset F^q_{\al,\mu}$ if $1-\frac \mu 3 \le \frac qp \le 1$,

(b) $F^p_{\al,\la}\subset F^p_{\al,\mu}$ if $\la<\mu$, and

(c) $F^p_{\al,\la}\subset F^q_{\al,\mu}$ if {$q<p$} and $\frac{3-\mu}q = \frac{3-\la}p$.
\end{lem}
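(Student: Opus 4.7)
The plan is to derive all three inclusions from a single ingredient: H\"older's inequality applied locally on each ball $B(x,r_x)$, combined with the uniform lower bound $r_x=\tfrac18\max(R,|x|)^\al\ge \tfrac18$, which is available because $R\ge 1$. Throughout, the supremum in the $F^{\cdot}_{\al,\cdot}(R)$-norm is tested against two regimes of $r_x$: the uniformly bounded region $r_x\in[1/8,\,\text{const}]$, and the unbounded tail $r_x\to\infty$ as $|x|\to\infty$. The lower bound absorbs negative powers of $r_x$, while the tail controls the admissible positive powers.

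For (a), I would first observe that $\int_{B(x,r_x)}|f|^p\le\|f\|_{L^p}^p$ together with $r_x^{-\la}\le 8^\la$ yields $L^p(\R^3)\subset F^p_{\al,\la}$ immediately. For the second claim, applying H\"older with exponents $p/q$ and $p/(p-q)$ gives
\[
\int_{B(x,r_x)}|f|^q \le \|f\|_{L^p}^q\,|B(x,r_x)|^{1-q/p}\lesssim r_x^{3(1-q/p)}\,\|f\|_{L^p}^q,
\]
so the quantity appearing in the $F^q_{\al,\mu}$-norm is bounded by a constant multiple of $r_x^{3(1-q/p)-\mu}\|f\|_{L^p}^q$. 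Since $r_x\to\infty$ as $|x|\to\infty$, finiteness of the supremum forces $3(1-q/p)-\mu\le 0$, which is exactly $q/p\ge 1-\mu/3$; the small-$r_x$ regime is then handled by $r_x\ge 1/8$.

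Part (b) should be the shortest: writing $r_x^{-\mu}=r_x^{\la-\mu}\,r_x^{-\la}$ and using $r_x\ge 1/8$ to bound the first factor by $8^{\mu-\la}$ gives $\|f\|_{F^p_{\al,\mu}}\le 8^{(\mu-\la)/p}\|f\|_{F^p_{\al,\la}}$. For (c), I would apply H\"older in the form
\[
\int_{B(x,r_x)}|f|^q \le \Big(\int_{B(x,r_x)}|f|^p\Big)^{q/p}|B(x,r_x)|^{1-q/p}\lesssim r_x^{\la q/p+3(1-q/p)}\|f\|_{F^p_{\al,\la}}^q,
\]
so that $r_x^{-\mu}\int_{B(x,r_x)}|f|^q$ carries the factor $r_x^{\,\la q/p+3(1-q/p)-\mu}$. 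This exponent vanishes precisely when $(3-\mu)/q=(3-\la)/p$, which is the scaling hypothesis, so no $r_x$-dependence remains and the supremum is finite.

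The only bookkeeping to check is that the exponents of $r_x$ line up with the stated conditions and that the uniform bound $r_x\ge 1/8$ absorbs any residual negative powers in (a) and (b); I do not anticipate any genuine obstacle, the content being a clean dimensional/scaling analysis on each ball.
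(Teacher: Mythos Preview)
Your argument is correct and matches the paper's approach: local H\"older on each $B(x,r_x)$ together with the uniform lower bound $r_x\ge 1/8$. One cosmetic point: in (a) your phrase ``finiteness of the supremum forces\dots'' reads as a necessity argument, but what you need (and in fact supply) is sufficiency---under the hypothesis $q/p\ge 1-\mu/3$ the exponent $3(1-q/p)-\mu$ is nonpositive, so $r_x\ge 1/8$ bounds the prefactor uniformly; the paper instead deduces this second inclusion from (c) with $\la=0$ combined with (b).
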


\begin{proof}
First, we prove part (a). Let $u\in L^p(\R^3)$. For any $x\in \R^3$ and $R\ge 1$ we have
\[
\frac{1}{R_x^{\la}}\int_{B(x,R_x)}|u(y)|^p~dy\le \|u\|^p_{p,\R^3}
\]
where $R_x=\frac18\{R,|x|\}^\al$.
Therefore, $u\in F^p_{\al,\la}$. The second half of (a) follows from (c) by taking $\la=0$.
Part (b) follows from basic algebraic inequality, for all $0\le\la<\mu$ and $x\in\R^3,~R\ge 1$:
\[
\frac{1}{R_x^{\mu}}\int_{B(x,R_x)}|u(y)|^p~dy\le \frac{1}{R_x^{\la}}\int_{B(x,R_x)}|u(y)|^p~dy.
\]
For part (c) we use H\"older inequality: take any $R\ge 1,~x\in\R^3$ and $\la,\mu,p,q$ satisfying $\frac{3-\mu}q = \frac{3-\la}p$, then
\EQN{
\Big(\frac{1}{R_x^{\mu}}\int_{B(x,R_x)}|u(y)|^q~dy\Big)^{\frac{1}{q}}
&\le \frac{|B_{R_x}(x)|^{\frac{p-q}{pq}}R_x^{\la/q}}{R_x^{\mu/q}}\Big(\frac{1}{R_x^{\la}}\int_{B(x,R_x)}|u(y)|^p~dy\Big)^{\frac{1}{p}}
\\
&=C\Big(\frac{1}{R_x^{\la}}\int_{B(x,R_x)}|u(y)|^p~dy\Big)^{\frac{1}{p}},
}
where $C$ is independent of $R_x$.
\end{proof}

The following lemma is useful in localizing a divergence free vector field in a ball.

\begin{lem}\label{2.4} Fix a radial cut-off function $\chi \in C^\infty_c(\R^3)$ with $\chi(x)=1$ for $|x|<\frac 34$ and $\chi(x)=0$ for $|x|>\frac 78$. Let $\chi_R(x)=\chi(x/R)$.
For any $v \in L^p(B_R)$ with $\div v=0$, $1\le p<\infty$, $R>4$,
we have $w\in W^{1,p}(\R^3;\R^3)$ such that $w(x)=0$ for $|x|>R$,
\EQ{\label{wR.eq1}
\div w= f:= v \cdot \nb \chi_R, \qquad
R\norm{\nb w}_{L^p} +\norm{ w}_{L^p} \le C_p \norm{v\cdot \mathbbm 1_{R}}_{L^p},
}
where $\mathbbm 1_{R}$ is the characteristic function of the support of $\nb\chi_R$.
Moreover, for any $0 \le \al \le 1$, $0 \le \la \le 2$, we have
\EQ{\label{wR.eq2}
\norm{w}_{F^p_{\al,\la}} \le C_{p,\al,\la} \norm{v\cdot \mathbbm 1_{R}}_{F^p_{\al,\la}} .
}
Above, the constants $C_p$ and $C_{p,\al,\la}$ are independent of $R$ and $v$.
\end{lem}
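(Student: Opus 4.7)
\emph{Proof plan.} The starting point is the mean-zero property of $f := v\cdot \nb \chi_R$: since $\chi_R v$ is compactly supported inside $B_R$ and $\div v = 0$ in $B_R$,
\[
\int_{\R^3} f\,dx = \int_{\R^3} \div(\chi_R v)\,dx = 0.
\]
Moreover $\supp f \subset S_R := \{3R/4 \le |x| \le 7R/8\}$ and $\|f\|_{L^p(\R^3)} \lec R^{-1}\|v\cdot \mathbbm 1_R\|_{L^p}$ since $|\nb\chi_R|\lec R^{-1}$. This places us in the standard setting for Bogovskii's operator.

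For \eqref{wR.eq1}, I would apply Bogovskii's operator on the annulus $A := \{R/2 < |x| < R\}$, a bounded Lipschitz domain that is a finite union of subdomains star-shaped with respect to balls. This produces $w \in W^{1,p}_0(A) \subset W^{1,p}(\R^3)$ (extended by zero) with $\div w = f$, $w \equiv 0$ for $|x|>R$, and $\|\nb w\|_{L^p} \le C_p\|f\|_{L^p} \le C_p R^{-1}\|v\cdot \mathbbm 1_R\|_{L^p}$, the Bogovskii constant being scale-invariant. Since $w$ vanishes on $\partial A$, Poincar\'e gives $\|w\|_{L^p} \le C R\|\nb w\|_{L^p}$, so the two estimates combine into \eqref{wR.eq1}.

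For \eqref{wR.eq2}, the $w$ from the previous step is too global (Bogovskii on $A$ has a global kernel), so I would construct a refined $w$ by a localized Bogovskii at the scale $\rho := \tfrac18 R^\al$, matching $r_x$ for $x \in S_R$. Cover $S_R$ by balls $\{B_k = B(x_k,\rho)\}$ of bounded overlap with a smooth subordinate partition of unity $\{\eta_k\}$, $\sum_k\eta_k = 1$ on $S_R$ and $\supp \eta_k \subset 2B_k \subset B_R$. The split $f = \sum_k\eta_k f$ does not have zero-mean pieces, so I would redistribute the masses $c_k := \int \eta_k f$ (which sum to zero) along a spanning structure on the adjacency graph of $\{B_k\}$, using unit-mass transfer bumps $\psi_{k,k'} \in C_c^\infty(2B_k \cap 2B_{k'})$. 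The outcome is $f = \sum_k \tilde f_k$ with $\int \tilde f_k = 0$, $\supp \tilde f_k \subset 2B_k$, and $\|\tilde f_k\|_{L^p} \lec R^{-1}\|v\cdot\mathbbm 1_R\cdot\mathbbm 1_{cB_k}\|_{L^p}$. Local Bogovskii on each $2B_k$ gives $w_k \in W^{1,p}_0(2B_k)$ with $\div w_k = \tilde f_k$ and $\|w_k\|_{L^p} \lec \rho\|\tilde f_k\|_{L^p} \le \|v\cdot\mathbbm 1_R\cdot\mathbbm 1_{cB_k}\|_{L^p}$ (using $\rho \le R$). Summing $w := \sum_k w_k$ yields $\div w = f$, $\supp w \subset B_R$, and the bounded overlap of $\{cB_k\}$ gives $\int_{B(y,r_y)}|w|^p \lec \int_{CB(y,r_y)}|v\cdot\mathbbm 1_R|^p$ for every $y \in \R^3$. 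Dividing by $r_y^\la$ and taking the supremum proves \eqref{wR.eq2}.

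The main obstacle is choosing the spanning structure so that each $\tilde f_k$ depends only on $v$ in a neighborhood of $B_k$ of comparable size, rather than on $v$ along a chain across the shell $S_R$. A naive spanning tree can produce paths of length $\sim R^{1-\al}/\rho$, which is too long for the desired locality. I would resolve this by a multi-scale redistribution that balances masses first within small clusters and then across progressively larger groups, keeping all transfers short at every scale. Once this is secured, the passage from local $L^p$ bounds to the $F^p_{\al,\la}$ norm is a routine bounded-overlap summation.
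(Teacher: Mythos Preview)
Your treatment of \eqref{wR.eq1} is fine; using Bogovskii on the annulus $\{R/2<|x|<R\}$ is a harmless variant of the paper's choice, which applies the Bogovskii formula on the full ball $B_R$ with a bump $\phi_R$ supported near the origin.

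For \eqref{wR.eq2} there is a genuine gap. You correctly isolate the difficulty: after writing $f=\sum_k\eta_kf$, the local means $c_k=\int\eta_kf$ do not vanish, and redistribution along a spanning tree creates nonlocal dependence across chains of length $\sim R^{1-\al}$. But your fix, a ``multi-scale redistribution,'' is only a slogan; the key claim $\|\tilde f_k\|_{L^p}\lec R^{-1}\|v\,\mathbbm 1_R\|_{L^p(cB_k)}$ is never verified. Absent cancellation, the net mass of a level-$\ell$ cluster can be of order $8^{\ell}\max_k|c_k|$, and the transfer bump at scale $2^{\ell}\rho$ then contributes a factor $\sim 2^{3\ell/p}$ beyond the baseline, which grows over the $\sim(1-\al)\log_2R$ levels. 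The divergence structure $f=\div(\chi_Rv)$ does give boundary-type control on cluster masses, but turning that into an $L^p$ bound without trace regularity of $v$ is not straightforward. In short, the step you yourself flag as ``the main obstacle'' is left unresolved. The paper in fact contains a remark noting that exactly this local-Bogovskii strategy was attempted in earlier work and has a gap at precisely this point.

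The paper's route to \eqref{wR.eq2} is entirely different and avoids redistribution altogether: it keeps the \emph{single} global Bogovskii operator on $B_R$ from part one and works with its explicit kernel,
\[
w(x)=\int_{B_R}N(x,y)f(y)\,dy,\qquad |N(x,y)|\lec |x-y|^{-2}.
\]
For each test ball $B(x_0,r)$ with $r=r_{x_0}$, split $w=w_1+w_2$ according to whether $y\in B(x_0,2r)$ or not. The near piece $w_1$ is a Riesz potential, giving $r^{-\la/p}\|w_1\|_{L^p(B(x_0,r))}\lec (r/R)\,\de$ with $\de=\|v\,\mathbbm 1_R\|_{F^p_{\al,\la}}$. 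The far piece $w_2$ is bounded \emph{pointwise}: summing the bound $\int_{B(k,r_k)}|f|\lec(\de/R)\,r^{3-(3-\la)/p}$ over lattice points $k\in r\ZZ^3\cap A_R$ against the kernel weight $(r^2+|k-x_0|^2)^{-1}$ yields $|w_2(x)|\lec \de\,(R^\al)^{-(3-\la)/p}$, hence $r^{-\la/p}\|w_2\|_{L^p(B(x_0,r))}\lec\de$. The nonlocality is absorbed by the kernel decay, and the lattice sum $\sum_{|l|\le R/r}(1+|l|^2)^{-1}\sim R/r$ exactly compensates the accumulation you were worried about. This is the idea your proposal is missing.
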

\begin{proof} Let $\Phi\in C^\infty_c(\R^3)$ be supported in $|x|\le \frac 12$ with
$\int_{\R^3} \Phi=1$. Let $\phi_R(x) =R^{-3} \Phi(\frac xR)$.
We use the Bogovskii formula for the ball $B_{R}$:
\EQ{
w(x)=\int_{B_R}  N(x,y)f(y)\,dy, \quad N(x,y)=\frac {x-y}{|x-y|^3} \int_{|x-y|}^\infty \phi_R\bke{y+r \frac {x-y}{|x-y|}}r^2 dr.
}
By the properties of the Bogovskii formula, we have $w \in W^{1,p}_0(B_R)$, $\div w= f$ and $\norm{\nb w}_{L^p(B_R)} \le C \norm{f}_{L^p(B_R)}$ with $C$ independent of $R$. It shows \eqref{wR.eq1}.

We now consider
$ \norm{w}_{F^p_{\al,\la}}$. For $x,y\in B_{cR}$, we have
\[
|N(x,y)| \lec \frac {1}{|x-y|^2} \int_{|x-y|}^{cR} R^{-3} r^2 dr\lec \frac {1}{|x-y|^2}.
\]
Also note the support of $f$ is in $A_R=B_R \setminus B_{R/2}$, and for $|k| \sim R$, we have $r_k \sim R^\al$ and
\EQS{
\int_{B(k,r_k)}|f(y)|\,dy &\le \bke{ r_k^{-\la} \int_{B(k,r_k)}|f(y)|^p\,dy}^{\frac 1p} r_k^{\la/p}   \bke{\int_{B(k,r_k)}1\,dz}^{1-\frac 1p}
\\
&
\lec \frac {\de}R  r_k ^{3-\frac{3-\la}p}, \quad \de:=  \norm{v\cdot \mathbbm 1_R}_{F^p_{\al,\la}} .
}

Fix $x_0 \in \R^3$ and $r=r_{x_0}$.
If $B(x_0,2r)$ intersects $A_R$, for any $x \in B(x_0,r)$, decompose
\EQ{\label{w.dec}
w(x) = \int_{A_R\cap B(x_0,2r)}  N(x,y)f(y)\,dy+\int_{A_R\setminus  B(x_0,2r) }  N(x,y)f(y)\,dy=: w_1+w_2.
}
By potential estimate,
\[
r^{-\la/p} \norm{w_1}_{L^p(B(x_0,r))} \lec r^{-\la/p}\, r \norm{f}_{L^p(B(x_0,2r))}
\lec \frac {r\de}R.
\]
For $w_2$, choosing one $l_0\in \ZZ^3$ closest to $x_0/r$, we have
\begin{align}
\nonumber
|w_2(x)|
&\lec \int_{A_R\setminus  B(x_0,2r) }  \frac  {|f(y)|}{|x-y|^2}\,dy
\lec \sum_{k \in r \ZZ^3 \cap A_R}\int_{B(k,r_k)} \frac  {|f(y)|}{r^2+|k-x_0|^2}\,dy
\\
&\lec  \sum_{l \in  \ZZ^3 \cap A_{R/r}}\frac  {1}{r^2(1+|l-l_0|^2)} \int_{B(k,r_k)}|f(y)|\,dy \qquad (k=rl)
\nonumber
\\
&\lec \sum_{l \in  \ZZ^3 \cap A_{R/r}}\frac  {1}{r^2(1+|l-l_0|^2)} \frac {\de}R  r ^{3-\frac{3-\la}p}
\lec \sum_{l \in  \ZZ^3 \cap A_{2R/r}}\frac  {1}{r^2(1+|l|^2)} \frac {\de}R  r ^{3-\frac{3-\la}p}
\nonumber
\\
&\lec \frac R{r^3} \,\frac {\de}R  r ^{3-\frac{3-\la}p} = \de (R^\al) ^{-\frac{3-\la}p}.
\label{w2.est}
\end{align}
Thus
\[
\bke{r^{-\la} \int_{B(x_0,r)} |w_2|^p }^{1/p} \lec (R^\al) ^{\frac{3-\la}p}\de (R^\al) ^{-\frac{3-\la}p}
= \de .
\]
Summing the estimates for $w_1$ and $w_2$, we get
\[
r^{-\la/p} \norm{w}_{L^p(B(x_0,r))} \lec \frac {r\de}R+ \de \lec \de.
\]

For $B(x_0,2r)$ not intersecting $A_R$, $w(x)$ is nonzero only if $|x_0|<R/2$. In this case, in the decomposition $w=w_1+w_2$ in \eqref{w.dec}, we have $w_1=0$, and the same upper bound \eqref{w2.est} for $w_2$ holds.  Thus
\[
\bke{r^{-\la} \int_{B(x_0,r)} |w|^p }^{1/p} \lec r ^{\frac{3-\la}p}\de (R^\al) ^{-\frac{3-\la}p}
\lec \de.
\]
Taking sup in $x_0\in \R^3$, we get \eqref{wR.eq2}.
\end{proof}

In Theorem \ref{mainthm} we have used the spaces $\mathring{F}^p_{\al,\la}$ and $\mathring{F}^p_{\al,\la,\si}$, defined in Definition \ref{Fpalla.def}, as
the closure of $C^\infty_c(\R^3)$ and $C^\infty_{c,\si}(\R^3)=\{ \zeta\in C^\infty_c(\R^3;\R^3), \div \zeta=0\}$ in $F^p_{\al,\la}$-norm. They are needed for the initial data to get local existence. The following lemma will clarify the relation between them and the subspaces
$\hat{F}^p_{\al,\la}$ and $\hat{F}^p_{\al,\la,\si}$ for functions and vector fields vanishing at spatial infinity:
\EQN{
\hat{F}^p_{\al,\la}(R)&=\bigg\{f\in F^p_{\al,\la},\quad \lim_{|x|\to \infty} \frac{1}{r_x^\la}\int_{B(x,r_x)}|f|^p=0\bigg\},
\\
\hat{F}^p_{\al,\la,\sigma}(R)&=\bigg\{f\in F^p_{\al,\la,\si},\ \lim_{|x|\to \infty} \frac{1}{r_x^\la}\int_{B(x,r_x)}|f|^p=0\bigg\},
}
where $r_x=\frac18\max\{|x|,R\}^{\al}$.
By Lemma \ref
{vanishing.initial.data},
 we may assume $R=1$ and omit the dependence on $R$, using the notations $\hat{F}^p_{\al,\la}=\hat{F}^p_{\al,\la}(1)$ and $\hat{F}^p_{\al,\la,\sigma}=\hat{F}^p_{\al,\la,\sigma}(1)$.%

Now we can prove the following lemma that connects spaces $\mathring{F}$ and $\hat{F}$:

\begin{lem}\label{F_hatspaces}
Let $1\le p<\infty$, $0 \le \al \le 1$ and $0 \le \la < 3$.
We have

\textup{(a)}\   $\mathring{F}^p_{\al,\la}=\hat{F}^p_{\al,\la}$.

\textup{(b)}\ $\mathring{F}^p_{\al,\la,\sigma}=\hat{F}^p_{\al,\la,\sigma}$.

\end{lem}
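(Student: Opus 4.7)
The plan is to prove both inclusions of (a) and (b) by combining a triangle inequality for the ``easy'' direction with a cut-off-plus-mollification scheme for the ``hard'' direction; in (b) the cut-off must be corrected via Lemma~\ref{2.4} to preserve $\div=0$. Throughout I take $R=1$ by Lemma~\ref{vanishing.initial.data}. For the easy direction $\mathring F^p_{\al,\la}\subset\hat F^p_{\al,\la}$, given $f\in\mathring F^p_{\al,\la}$ and $\ep>0$, pick $\varphi\in C^\infty_c(\R^3)$ (resp.\ $C^\infty_{c,\si}(\R^3)$ in (b)) with $\|f-\varphi\|_{F^p_{\al,\la}}<\ep$. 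Because $(r_x^{-\la}\int_{B(x,r_x)}|\cdot|^p\,dy)^{1/p}$ is a genuine $L^p$ seminorm on the integrand, Minkowski's inequality gives pointwise in $x$ the bound $(r_x^{-\la}\int_{B(x,r_x)}|f|^p)^{1/p}\le \|f-\varphi\|_{F^p_{\al,\la}}+(r_x^{-\la}\int_{B(x,r_x)}|\varphi|^p)^{1/p}$. For $|x|$ large enough (depending on $\supp\varphi$), $r_x\le |x|/8$ forces $B(x,r_x)$ to miss $\supp\varphi$ and the second term vanishes, so $\limsup_{|x|\to\infty}\le \ep$; the arbitrariness of $\ep$ gives $f\in\hat F^p_{\al,\la}$. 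In part (b) divergence-freeness passes to $F^p_{\al,\la}$-limits distributionally, so the same argument works.

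For the hard direction in (a), let $\chi_R$ be the radial cut-off from Lemma~\ref{2.4} and set $f_R:=f\chi_R$. The error $\|f-f_R\|_{F^p_{\al,\la}}$ can only see balls $B(x,r_x)$ meeting $\{|y|\ge 3R/4\}$; combining $|x|+r_x\ge 3R/4$ with $r_x\le |x|/8$ forces $|x|\gtrsim R$, so the vanishing hypothesis yields $\|f-f_R\|_{F^p_{\al,\la}}\to 0$ as $R\to\infty$. Since $f_R\in L^p(\R^3)$ is compactly supported and $r_x\ge 1/8$ gives $\|g\|_{F^p_{\al,\la}}\le 8^{\la/p}\|g\|_{L^p(\R^3)}$ for any compactly supported $g\in L^p$, a standard mollification $f_R*\phi_\de\to f_R$ in $L^p$ automatically converges in $F^p_{\al,\la}$. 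A diagonal extraction produces the desired $C^\infty_c$ approximation.

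For (b), the cut-off $f_R$ fails to be divergence-free because $\div f_R=f\cdot\nb\chi_R$ is supported in the annulus $\{3R/4<|y|<7R/8\}$. Applying Lemma~\ref{2.4} with $v=f|_{B_R}\in L^p(B_R)$ (divergence-free since $f$ is) yields a corrector $w_R\in W^{1,p}(\R^3)$, supported in $B_R$, with $\div w_R=f\cdot\nb\chi_R$ and $\|w_R\|_{F^p_{\al,\la}}\lec \|f\,\mathbbm 1_R\|_{F^p_{\al,\la}}$. Then $g_R:=f_R-w_R$ is divergence-free, compactly supported in $B_R$, and $\|f-g_R\|_{F^p_{\al,\la}}\le\|f-f_R\|_{F^p_{\al,\la}}+\|w_R\|_{F^p_{\al,\la}}$. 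The first term was handled above; for the second, $\mathbbm 1_R$ is supported in the annulus, so only balls $B(x,r_x)$ with $|x|\gtrsim R$ contribute to its $F^p_{\al,\la}$ norm, and vanishing of $f$ at infinity supplies the decay. Mollifying $g_R$ by a symmetric mollifier preserves $\div=0$, sits in $C^\infty_{c,\si}$, and converges in $L^p$ hence in $F^p_{\al,\la}$ by the elementary bound noted above; diagonalizing completes the proof.

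The main obstacle is the divergence repair in (b). Its crux is showing $\|w_R\|_{F^p_{\al,\la}}\to 0$, which via Lemma~\ref{2.4} reduces to the annulus-restricted norm $\|f\,\mathbbm 1_R\|_{F^p_{\al,\la}}\to 0$ as $R\to\infty$. The geometric observation that $\supp\nb\chi_R$ lies at distance $\sim R$ from the origin forces only balls with $|x|\gtrsim R$ to contribute to this norm, at which point the qualitative vanishing of $f$ at infinity transfers to the quantitative smallness needed to close the approximation. Everything else is routine cut-off and mollification.
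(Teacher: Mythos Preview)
Your proof is correct and follows essentially the same strategy as the paper's: the easy inclusion via approximation by compactly supported functions, and the hard inclusion via the cutoff $\chi_R$ corrected (in the divergence-free case) by the Bogovskii-type map of Lemma~\ref{2.4}, with smallness of the corrector reduced to $\|f\,\mathbbm 1_R\|_{F^p_{\al,\la}}$ and then to the vanishing hypothesis through the observation that only balls with $|x|\gtrsim R$ can meet $\supp\nb\chi_R$. The paper's write-up is slightly more terse (fixing $\ep$ first and choosing $R=2\rho$ from the $\ep$-threshold rather than sending $R\to\infty$), but the content is identical.
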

\begin{proof}
  First we will prove that $\mathring{F}^p_{\al,\la}\subset\hat{F}^p_{\al,\la}$. Take $u\in \mathring{F}^p_{\al,\la}$ and let $\varepsilon>0$, then there exists $u_{\varepsilon}\in C^{\infty}_c$ such that $\|u-u_{\varepsilon}\|_{F^p_{\al,\la}}\le\varepsilon$. Since $u_{\varepsilon}$ is compactly supported there exists $\rho$ and $\supp\{u_{\varepsilon}\}\subset B_{\rho/2}$, then we have the following bound
  \EQ{
  \underset{|x|\ge\rho}\sup\bke{\frac{1}{R_x^\la}\int_{B(x,R_x)}|u|^p~dx}^{1/p}\le  \|u-u_{\varepsilon}\|_{F^p_{\al,\la}}\le\varepsilon
  }
Therefore $\mathring{F}^p_{\al,\la}\subset\hat{F}^p_{\al,\la}$. Proof of inclusion $\mathring{F}^p_{\al,\la,\sigma}\subset\hat{F}^p_{\al,\la,\sigma}$ follows the same argument with additional assumptions that $u,u_{\varepsilon}$ are divergence free.

Next we will prove the other inclusion $\hat{F}^p_{\al,\la,\sigma}\subset\mathring{F}^p_{\al,\la,\sigma}$.
  Take $v\in \hat{F}^p_{\al,\la,\sigma}$ then for any $\varepsilon>0$ there exists $\rho$ such that
   \EQ{\label{2.13}
  \underset{|x|\ge \rho}\sup\bke{\frac{1}{R_x^\la}\int_{B(x,R_x)}|v|^p~dx}^{1/p}\le\varepsilon.
  }
  We apply Lemma \ref{2.4} with $R=2\rho$ to find $w$ satisfying \eqref{wR.eq1} and \eqref{wR.eq2}.
Extend $w$ by zero to the whole space $\R^3$. We choose $v_{\varepsilon}=v\chi_R-w$, where $\chi_R$ is the same cut-off function as in Lemma \ref{2.4}. From \eqref{wR.eq1} we get that $\div v_{\varepsilon}=0$ and
\[
\|v-v_{\varepsilon}\|_{F^p_{\al,\la}}\le  \|v(1-\chi_R)\|_{F^p_{\al,\la}} +   \|w\|_{F^p_{\al,\la}}
\le \e + C_{p,\al,\la}\varepsilon.
\]
Here we used \eqref{2.13} and \eqref{wR.eq2} for the last inequality. Take a suitable mollification $v_\e * \eta_t \in C^\infty_{c,\si}$ with $\norm{v_\e -v_\e * \eta_t }_{L^p} \le \e$, we have $\|v-v_{\varepsilon}* \eta_t \|_{F^p_{\al,\la}}\le C\e$. The inclusion
$\hat{F}^p_{\al,\la}\subset\mathring{F}^p_{\al,\la}$ is similar but without the need of $w$, and is skipped.
\end{proof}

\begin{remark}
In case $\al=1$, part (b) of Lemma \ref{F_hatspaces} has an alternative proof, similar to \cite{BK, BKT}, using a Bogovskii map in the annulus $A_R$ instead of the ball $B_R$. Specifically, we have
\[
w\in W^{1,p}_0(A_R),\quad \div w=v\cdot \nb \chi_R,\quad
\norm{\nb w}_{L^p(A_R)}  \le C_p \norm{v\cdot \nb \chi_R}_{L^p}.
\]
As $A(R)$ can be covered by $N$ balls $B_k=B(x_k,r_{x_k})$, $1\le k\le N$, with $N$ independent of $R$,
if $B(x,r_x)$ intersects the support of $w$, then $r_x \sim R$ and
\EQN{
\bke{\frac1{r_x^\la} \int_{B(x,r_x)} |w|^p}^{\frac1p}
&\lec R^{-\frac\la p} R  \bke{\int_{A(R)} |v\cdot \nb \chi_R|^p}^{\frac1p}
\lec R^{-\frac\la p} \bke{ \sum_{k}\int_{B_k} |v|^p}^{\frac1p}
\\
&\lec R^{-\frac\la p} \bke{ \sum_{k} R^\la o(1)}^{\frac1p} = o(1)
}
where $o(1) \to 0$ as $R\to \infty$. We then define $u^R= f \chi_R - w$ and smoothrize it.

This argument
does not work for $\al<1$ mainly due to the differences in annulus covering Lemma \ref{annulas-covering} for $\al=1$ and $\al<1$. Specifically, we will need $N(R) =O(R^{3-3\al})$ balls $B_k= B(x_k, r_{x_k})$ to cover $A_R$, with $N(R) \to \infty$ as $R \to \infty$ if $\al<1$.\hfill $\square$
\end{remark}

\medskip
\begin{remark}
When $\al=0$, $F^p_{0,\la} = L^p_{\uloc}$. In this case, part (b) of Lemma \ref{F_hatspaces} is assumed in \cite{LR} and proved in \cite[Appendix]{KiSe}. However, the proof of \cite{KiSe} has a gap: It claims Lemma \ref{2.4} by applying Bogovskii map to the unit ball $B_k=B(k,1)$ for each $k \in \ZZ^3$ and get
$v^k \in W^{1,p}_0(B_k;\R^3)$ for each $k\in \NN$ that is a solution of the equation%
\[
\div v^k = (f\cdot \nb \chi_R)\ph_k - c_k\quad \text{in }B_k, \quad c_k:=\frac 1{|B_k|}\int_{B_k} (f\cdot \nb \chi_R)\ph_k\,dx,
\]
where $\ph_k$ is a partition of unity such that
\[
\ph_k \in C^\infty_c(\R^3), \quad \text{spt } \ph_k \subset B_k, \quad \ph_k \ge 0, \quad \textstyle \sum_k \ph_k=1.
\]
Extending $v^k$ by zero to the whole space $\R^3$ and setting $v^R = \sum_k v^k$, we get
\[
\div v^R = \sum_k (f\cdot \nb \chi_R)\ph_k -  \sum_k c_k \mathbbm 1_{B_k}= f\cdot \nb \chi_R -\sum_k c_k\mathbbm 1_{B_k}.
\]
The proof of \cite{KiSe} overlooked the factor $\mathbbm 1_{B_k}$ and thought the last sum equaled $\sum_k c_k=0$ so that $v^R$ was our desired $w$. It is unclear how to fix this gap, and Lemma \ref{F_hatspaces} provides a proof.
\hfill $\square$
\end{remark}

\begin{example}\label{exampleS2a}
Let $\chi\in C^1_c(B_1)$ be a bump function.
For $k \in \NN$, let $x_k = 2^ke_1$, $1\le R_k\le 2^{k\al}$, $|a_k|\le 1$, and
\[
f(x) =\sum_{k\in \NN} a_k 2^{k \al \la} R_k^{-3}\,\chi\bke{\frac{x-x_k}{R_k}}.
\]
Then $f\in F^1_{\al, \la}$. If $a_k=o(1)$ as $k \to \infty$, then $f\in \mathring F^1_{\al, \la}$. If we take $a_k=R_k=1$, then $f \not \in L^2_\uloc$ if $\al>0$.
\end{example}

\begin{example}\label{exampleS2b}
Let
\[
f(x)= \frac 1{(1+|x|)^{m}} , \quad 0\le m<3.
\]
(If $m\ge3$, then $f $ is localized and not so interesting.)
If  $|x|>R$, then
\[
\frac1{r_x^{\la}} \int_{B(x,r_x)} f \approx |x|^{\al(3-\la)-m} .
\]
Its sup over $|x|>R$ is finite if $\al(3-\la) \le m$.
If  $|x|<R$, then
\[
\frac1{r_x^{\la}} \int_{B(x,r_x)} f \lec R^{-\al\la} (R^{\al})^{3-m}=R^{\al(3-\la-m)} .
\]
Thus
\[
\norm{f}_{F^1_{\al,\la}(R)} \sim R^{\al(3-\la-m)} , \quad  \text{if }\al(3-\la)\le m ,
\]
and $f\in \mathring F^1_{\al,\la}$ if $\al(3-\la)< m$. However, if $\al(3-\la)< m<3-\la$, (for which we need $\al<1$), then
$f\in \mathring F^1_{\al,\la}$, $f\not \in F^1_{\al=1,\la}$, and $\lim _{R \to \infty} \norm{f}_{F^1_{\al,\la}(R)}=\infty$. Moreover,
\[
R^{\al(\la-2)} \norm{f}_{F^1_{\al,\la}(R)}\sim R^{\al(1-m)}
\]
still goes to infinity as $R \to \infty$ if $m  <1$. This shows that, if $f=|u_0|^2$, then $T_n$ defined by \eqref{Tn.def} may not go to $\infty$ if
$u_0\in \mathring F^2_{\al,\la}$ with $\al(3-\la)< 1$ by choosing $m \in (\al(3-\la), 1)$.
This example does not exist when $\al=1$ since $m$ cannot be chosen.\hfill $\square$
\end{example}

The following lemma is concerned with the borderline case $(3-\la)\al=1$ of \eqref{vanishing}.
We will let $f=|u_0|^2$, so $\norm{f}_{F^{1}_{\al,\lambda}(\rho)} =\norm{u_0}_{F^{2}_{\al,\lambda}(\rho)}^2$, to show that $u_0 \in \mathring F^2_{\al,\la}$ implies \eqref{vanishing}. This result is already known for $(\al,\la)=(1,2)$ by \cite[Lemma 2.1]{BKT}.

\begin{lem}\label{lem2.9}
Let $0 \le \al \le 1$, $0 \le \la \le 2$, and $(3-\la)\al=1$. Suppose $f \in \mathring F^1_{\al,\la}$, then
\[
\lim_{\rho\to \infty} \rho^{\al(\la-2)} \|f\|_{F^{1}_{\al,\lambda}(\rho)} =0.
\]
\end{lem}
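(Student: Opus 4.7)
By Lemma \ref{F_hatspaces}(a), the hypothesis $f\in \mathring F^1_{\al,\la}$ is equivalent to the pointwise vanishing
\[
\lim_{|y|\to\infty}\frac{1}{r_y^\la}\int_{B(y,r_y)}|f|=0,\qquad r_y=\tfrac18\max\{|y|,1\}^\al,
\]
so given $\varepsilon>0$ we may fix $K=K(\varepsilon)$ such that this ratio is $<\varepsilon$ for all $|y|>K$; set $A:=\|f\|_{F^1_{\al,\la}(1)}$. My plan is to bound the supremum defining $\|f\|_{F^1_{\al,\la}(\rho)}$ at large scale $\rho$ by covering each ball $B(x,\rho_x)$ (with $\rho_x=\frac18\max\{|x|,\rho\}^\al$) by unit-scale balls $B(y,r_y)$, applying the vanishing bound on those pieces with $|y|>K$, and using the identity $\al(\la-2)=\al-1\le 0$ (forced by the hypothesis $(3-\la)\al=1$) to balance the resulting powers of $\rho$.

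\medskip

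I split the sup over $x$ into three regimes. \emph{(i)} For $|x|\ge \rho$ one has $\rho_x=r_x$, and the vanishing immediately gives $\rho_x^{-\la}\int_{B(x,\rho_x)}|f|<\varepsilon$ once $\rho>K$. \emph{(ii)} For $\rho^\al/4<|x|<\rho$ the ball $B(x,\rho_x)$ is well separated from the origin, so I cover it by $\lec(\rho_x/|x|^\al)^3$ unit-scale balls $B(y_k,r_{y_k})$ with $|y_k|\sim|x|$ (which all lie outside $\{|y|\le K\}$ once $\rho$ is large enough that $\rho^\al/8>K$), and vanishing gives $\rho_x^{-\la}\int_{B(x,\rho_x)}|f|\lec\varepsilon\,\rho/|x|$; the prefactor $\rho^{\al-1}$ then yields $\lec\varepsilon\rho^\al/|x|\lec\varepsilon$. \emph{(iii)} For $|x|\le \rho^\al/4$ the ball $B(x,\rho_x)$ is contained in $B(0,\rho^\al/2)$ and surrounds the origin; I decompose it into the compact core $B_1$ together with dyadic annuli $A_{2^j}$, $0\le j\le J$ with $2^J\sim\rho^\al$, and use the annulus covering of Lemma \ref{annulas-covering} to cover each $A_{2^j}$ by $\lec (2^j)^{3-3\al}$ unit-scale balls of radius $\sim 2^{j\al}$.

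\medskip

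The crucial computation lies in regime \emph{(iii)}. For each $j$ with $2^j>K$, vanishing bounds the integral on each sub-ball by $\varepsilon\,(2^{j\al})^\la$, so the $j$-th annulus contributes $\lec\varepsilon\,2^{j(3-3\al+\al\la)}=\varepsilon\,2^{2j}$, the exponent collapsing to $2$ exactly because $3-\al(3-\la)=2$. Summing the geometric series yields $\lec\varepsilon\,\rho^{2\al}$, so after the weight $\rho_x^{-\la}\sim\rho^{-\al\la}$ and the prefactor $\rho^{\al(\la-2)}=\rho^{\al-1}$ the net contribution is $O(\varepsilon)$. The remaining finitely many annuli with $2^j\le K$ and the core $B_1$ are controlled only by $A$, producing after the same bookkeeping a remainder of size $\lec AK^2\rho^{-2\al}$, which tends to zero because $\al\ge 1/3$ on the curve $(3-\la)\al=1$. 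Combining the three regimes yields
\[
\rho^{\al(\la-2)}\|f\|_{F^1_{\al,\la}(\rho)}\lec\varepsilon+AK^2\rho^{-2\al},
\]
whence $\limsup_{\rho\to\infty}\le C\varepsilon$, and sending $\varepsilon\to 0$ concludes the proof. The main delicate point is the exponent cancellation just described, which uses the borderline identity $(3-\la)\al=1$ in an essential way; at the endpoint $(\al,\la)=(1,2)$ the argument reduces to \cite[Lemma 2.1]{BKT}.
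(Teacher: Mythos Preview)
Your proof is correct and follows essentially the same approach as the paper's (first) proof: the same three-regime split at $|x|=\rho$ and $|x|\sim\rho^\al$, the same covering of $B(x,\rho_x)$ by small-scale balls in regime~(ii), and the same dyadic-annulus decomposition of $B(0,\rho^\al)$ in regime~(iii), with the key exponent collapse $3-3\al+\al\la=2$ coming from $(3-\la)\al=1$. The only cosmetic difference is that the paper covers with balls at reference scale $R=R(\varepsilon)$ chosen from the vanishing condition, whereas you cover at scale $1$ and separately isolate the finitely many annuli with $2^j\le K$; both produce the $O(\rho^{-2\al})$ remainder.
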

\begin{proof}
We may assume $f\ge 0$. Note
\[
\rho^{\al(\la-2)} \|f\|_{F^{1}_{\al,\lambda}(\rho)} = 8^\la \max(I,J),
\]
where
\[
I= \sup_{|x|<\rho} \rho^{-2\al} \int_{B(x,\frac18 \rho^\al)} f,\qquad
J= \sup_{|x|\ge\rho} \rho^{\al(\la-2)} |x|^{-\al \la} \int_{B(x,\frac18 |x|^\al)} f.
\]

Since $f \in \mathring F^1_{\al,\la}$,
for any $\e>0$, there is $R>1$ such that
\EQ{\label{1018}
\sup_{|x|>R/2} |x|^{-\al \la} \int_{B(x,\frac18 |x|^\al)} f \le \e.
}
Consider $\rho \gg R$ so that $\rho^\al > 8R$. As $ \rho^{\al(\la-2)}\le1$, we have $J \le \e$.

We next focus on $I$. Denote $R_x = \frac18\max(R,|x|)^\al$ and $\rho_x = \frac18\rho^\al$ for $|x|<\rho$.

If $2\rho_x < |x| < \rho$, all $y \in B(x,\rho_x)$ has $|y| \sim |x|$ and hence $R_y \sim R_x=\frac 18|x|^\al$ since $R< \rho_x<|x|$. Hence $B(x,\rho_x)$ can be covered by $K$ balls $B(y_k,R_{y_k})$ with $K \lec (\rho_x/R_x)^3\approx (\rho/|x|)^{3\al}$, and, using \eqref{1018},
\EQN{
\frac{1}{\rho^{2\al}}\int_{B(x,\rho_x)}f
&\le \frac{1}{\rho^{2\al}}\sum_{k=1}^K \int_{B(y_k, R_{y_k})}f
\\
&\lec \frac{1}{\rho^{2\al}}\sum_{k=1}^K R_{y_k}^\la \e
\lec \frac{1}{\rho^{2\al}} (\rho/|x|)^{3\al} |x|^{\al\la} \e
\\
& = \rho ^\al |x| ^{-(3-\la)\al} \e
= \rho ^\al |x| ^{-1} \e \le C \e.
}
The second equality uses $\al(3-\la)=1$. The last inequality uses $2\rho_x < |x|$.

\medskip
If $|x|<2\rho_x$, then $B(x,\rho_x)\subset B(0,\rho^\al)$. We cover $B(0,\rho^\al)$ by $B_R(0)$ and $K$ annulus
\[
\mathcal A_k = \{ y \in \R^3: 2^{k-1} R \le |y|< 2^{k}R\}, \quad k=1,\ldots,K,
\]
with $2^{K-1} R \le \rho^\al< 2^{K}R$. Each $\mathcal A_k$ can be covered by $N_k$ balls $B_{k,j}=B(y_{k,j}, R_{y_{k,j}})$ with $y_{k,j} \in \mathcal A_k$, $R_{y_{k,j}} \sim ( 2^k R)^\al $, and $N_k \lec ( 2^k R)^{3(1-\al)} $. Denoting $C_R=\int_{B_R(0)} f$ and using \eqref{1018},
\EQN{
\frac{1}{\rho^{2\al}}\int_{B(x,\rho_x)} f
&\le \frac{1}{\rho^{2\al }}\bigg( C_R
+ \sum_{k=1}^K \sum_{j=1}^{N_k} \int_{B_{k,j}} f \bigg)
\\
&\le \frac{C_R}{\rho^{2\al }}
+ \frac{C}{\rho^{2\al }}\sum_{k=1}^K  ( 2^k R)^{3(1-\al)+\al \la} \e
\\
&
\le  \frac{C_R}{\rho^{2\al }}+
 \frac{C}{\rho^{2\al }}(\rho^{\al} )^{3 -3\al+\al \la}\e
 = \frac{C_R}{\rho^{2\al }} +C
 \e.
}
The last equality uses $\al(3-\la)=1$. The first term is bounded by $\e$ when $\rho$ is sufficiently large, using $\al >0$.

Combining the 2 cases we have
shown $I \le C \e$ for $\rho$ sufficiently large. Since $\e>0$ is arbitrary, we have shown the lemma.
\end{proof}

We include a second proof of Lemma \ref{lem2.9} which uses an auxiliary function $g$ to avoid the series appearing in the preceding proof. 

\begin{proof}[Second proof of Lemma \ref{lem2.9}]
Let $g(x)= (1+|x|^{3\al -\al \la})^{-1}$
and $r_x = \frac18\max(\rho,|x|)^\al$. {For the moment we do not assume
$3\al -\al \la=1$.}
We claim there exists a constant $\td c>0$ independent of $\rho$ so that
\EQ{\label{gintinf}
\td c^{-1}\leq \inf_{x\in \R^3}%
\frac 1 {r_x^\la}\int_{B(x,r_x)} g.
}
Indeed, %
\[
\frac 1 {r_x^\la}\int_{B(x,r_x)} g 
\ge
 r_x^{3-\la} \min_{B(x,r_x)} g =\frac{
 r_x^{3-\la}}{ 1+(|x|+r_x)^{(3-\la)\al}  }\gec
 \frac{
 r_x^{3-\la}}{ 1+\max(\rho,|x|)^{(3-\la)\al}  }\approx 1.
\]
It is bounded from below by a constant independent of $x$ and $\rho$.
Observe also that, for $\rho\geq 1$, %
and under the requirement that $3\al - \al\la=1$,%
\[
\frac 1 {\rho^{2\al}} \int_{B(0,3\rho^\al)} g< \td C,
\]
where $\td C$ is independent of $\rho$.

For  $|x|\leq \rho$, let $\mathcal C$ be a covering of  {$B(x,\rho^\al/8)$}
by balls of the form $B(y,r_y)$  which are taken so that there is a uniform bound on the number of cover elements that intersect any single cover element.  The following bound is the key tool in this proof,%
\EQ{\label{ineq.sum}
\sum_{x_k\in \mathcal C} \frac {r_{x_k}^\la} {\rho^{2\al}} \leq  \sum_{x_k\in \mathcal C} \frac {\td c} {\rho^{2\al}} \int_{B(x_k,r_{x_k})} g \lesssim   \frac c{\rho^{2\al}} \int_{B(x,3\rho^{\al})}  g \lesssim   \frac c{\rho^{2\al}} \int_{B(0,3\rho^{\al})}  g   <C,
}
where, abusing notation, we are indexing cover elements by their centers $x_k$.
Note that in the second last inequality we were able to pass from $B(x,3\rho^\al)$ to $B(0,3\rho^\al)$ because $g$ is radial and non-increasing in $|x|$.
The uniform bound above is independent of $\rho$ and $x$.%

Suppose $f\in \mathring F^1_{\al,\la}$ and, without loss of generality,
$\|f\|_{F^{1}_{\al,\la}}=1$.
 Let
\[
\phi(r) = \sup_{|x|\geq r} \frac 1 {r_{x}^\la} \int_{B(x,r_x)} f.
\]
Then $\phi$ is decreasing in $r$ and bounded by $1$. Suppose $|x|\leq \rho$. 
Let $\mathcal C$ be a covering of $B(x,\rho^\al/8)$ as above.
Then%
\EQ{
\frac 1 {\rho^{2\al}} \int_{B(x,\rho^\al/8)} f 
\lesssim \sum_{x_k\in \mathcal C} \frac 1 {\rho^{2\al}}\int_{B(x_k,r_{x_k})} f
\le \sum_{x_k\in \mathcal C} \phi(|x_k|)\frac {r_{x_k}^\la} {\rho^{2\al}} .
}
 Let $\e>0$ be given.  The above is equal to %
\EQN{
&=  \sum_{x_k\in \mathcal C;|x_k|\leq \e\rho^{\al}} \phi(|x_k|)\frac {r_{x_k}^\la} {\rho^{2\al}} + \sum_{x_k\in \mathcal C;|x_k|> \e\rho^{\al}} \phi(|x_k|) \frac {r_{x_k}^\la} {\rho^{2\al}}
\\&\leq    \sum_{x_k\in \mathcal C;|x_k|\leq \e\rho^{\al}}  \e^2  \frac {r_{x_k}^\la} {\e^2 \rho^{2\al }}   + \phi(\e\rho^\al)
\lesssim \e^2 	+\phi(\e\rho^\al),
}
where we have used \eqref{ineq.sum} applied to the ball $B(0,2\e \rho^\al)$ to get the $\e^2$ term on the right-hand side, noting that the collection of balls with $|x_k|\leq \e\rho^\al$ is contained in a suitable cover of $B(0,2\e \rho^\al)$. With $\e$ fixed, by taking $\rho$ large $\phi(\e\rho^\al)$ can be made small.

To conclude the proof note that
\[
\sup_{|x|\geq \rho}  \rho^{\al\la -2\al} \frac 1 {r_x^\la}\int_{B(x,r_x)}f\leq  \rho^{\al\la-2\al} \phi( \rho)  \to 0,
\]
as $\rho\to \I$.
\end{proof}

{The following lemma will be useful to Lemma \ref{Remark.ER} and Remark \ref{rem2.11}. It is
given in \cite[Lemma 2.1]{BKT} for $M^{2,2}$, but the result for general $M^{p,\la}$, $\la>0$, is proven identically.} 
 
\begin{lem}[Centered definition]\label{Mpq-centered} 
Let $p ,R\ge 1$ and $\la >0$.
The Herz space $M^{p,\la}=F^p_{1,\la}$ defined in \eqref{Mpq} has three equivalent norms: 
\[
\|f\|_{M^{p,\la}(R)}^p =\frac1{R^\la}\!\int_{B_R(0)}|f|^p +
\sup_{|x|>R}  \frac1{|x|^\la}\!\int_{B(x,  \frac{|x|}2)}|f|^p,
\quad
\|f\|_{ M_*^{p,\la}(R)}^p = \sup_{r\ge R}\, \frac 1 {r^\la}\! \int_{B(0,r)}|f|^p\,dx,
\]
and
$\|f\|_{F^{p}_{1,\la}(R)}$.
The constant $c_1\in (0,1)$ with 
$\|f\|_{M_*^{p,\la}(R)}, \|f\|_{F^{p}_{1,\la}(R)}  \in (c_1, c_1^{-1}) \|f\|_{M^{p,\la}(R)}$ 
is independent of $R \ge 1$.
For $f\in M^{p,\la}$, the following 3 statements are equivalent:\smallskip

\textup{(a)} $f\in \mathring M^{p,\la}$ defined in \eqref{Mpqring}.

\textup{(b)} $\lim_{R\to \infty} \|f\|_{M^{p,\la}(R)}=0$.

\textup{(c)} $\lim_{r\to \infty} \frac 1 {r^\la}\!\int_{B(0,r)}|f|^p\,dx=0$.

\end{lem}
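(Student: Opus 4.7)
The argument has two main components: establishing the equivalence of the three norms, then deducing the equivalence of the vanishing conditions (a), (b), (c). Throughout, the key technical ingredient is a dyadic-annulus decomposition whose summation relies essentially on $\la>0$.

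First I would compare $\|f\|_{M^{p,\la}(R)}$ with $\|f\|_{M_*^{p,\la}(R)}$. The direction $\|f\|_{M^{p,\la}(R)}\lec \|f\|_{M_*^{p,\la}(R)}$ is immediate from $\int_{B_R(0)}|f|^p\le R^\la\|f\|_{M_*^{p,\la}(R)}^p$ together with the inclusion $B(x,|x|/2)\subset B(0,3|x|/2)$ applied to the off-center term. For the converse, cover $B(0,r)\setminus B(0,R)$ by dyadic annuli $\mathcal{A}_k=\{2^{k-1}R\le|y|<2^k R\}$ for $k=1,\dots,K$ with $2^{K-1}R\le r<2^K R$; each $\mathcal{A}_k$ is covered by a universal number (independent of $k$ and $R$) of balls $B(x_j,|x_j|/2)$ with $|x_j|\sim 2^{k-1}R$, so $\int_{\mathcal{A}_k}|f|^p\lec (2^k R)^\la\|f\|_{M^{p,\la}(R)}^p$. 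Summing the geometric series in $k$ produces $(2^K R)^\la\sim r^\la$ precisely because $\la>0$, which is where positivity is essential. The equivalence $\|f\|_{M^{p,\la}(R)}\sim\|f\|_{F^p_{1,\la}(R)}$ is then a direct ball-comparison exercise: for $|x|\le R$ one has $r_x=R/8$ with $B(x,r_x)\subset B(0,9R/8)$, while for $|x|>R$ one has $r_x=|x|/8$ with $B(x,r_x)\subset B(x,|x|/2)$; conversely $B_R(0)$ and each ball $B(x,|x|/2)$ with $|x|>R$ are covered by a universal number of $F^p_{1,\la}(R)$-balls of comparable radius, each of which is controlled by $\|f\|_{F^p_{1,\la}(R)}^p$. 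All constants are manifestly independent of $R\ge 1$.

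For the vanishing equivalences, the one nontrivial implication is (a)$\Rightarrow$(b). The supremum term in $\|f\|_{M^{p,\la}(R)}$ vanishes as $R\to\infty$ directly by (a); for the centered piece $R^{-\la}\int_{B_R(0)}|f|^p$, given $\e>0$ I would choose $R_0$ so that $|y|^{-\la}\int_{B(y,|y|/2)}|f|^p<\e$ for $|y|\ge R_0$, split $B_R=B_{R_0}\cup(B_R\setminus B_{R_0})$, let the first piece go to $0$ by fixing $R_0$ and sending $R\to\infty$, and control the second piece by the same dyadic covering as in the norm equivalence, bounding each annular integral by $C(2^k R_0)^\la\e$ and summing the resulting geometric series to $CR^\la\e$. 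Dividing by $R^\la$ yields a uniform $C\e$ bound. Then (b)$\Rightarrow$(c) is immediate from the norm equivalence since (c) is exactly the statement $\|f\|_{M_*^{p,\la}(R)}\to 0$, and (c)$\Rightarrow$(a) follows from $B(x,|x|/2)\subset B(0,3|x|/2)$ combined with $|x|^{-\la}\lec(3|x|/2)^{-\la}$. The main obstacle is the dyadic-annulus geometric-series bookkeeping shared by the norm equivalence and (a)$\Rightarrow$(b); this is precisely where positivity of $\la$ enters, and the argument parallels [BKT, Lemma 2.1] (the $\la=2$ case).
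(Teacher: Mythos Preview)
Your proposal is correct and follows essentially the same approach as the paper: both rely on a dyadic-annulus covering (finitely many balls per shell when $\al=1$) and the geometric-series summation that requires $\la>0$, and the nontrivial vanishing implication (your (a)$\Rightarrow$(b), the paper's (a)$\Rightarrow$(c)) is proved by the identical splitting-plus-$\e$-annulus argument. The only cosmetic difference is that the paper exhibits $\|f\|_{F^p_{1,\la}(R)}\sim\|f\|_{M_*^{p,\la}(R)}$ as its worked example while you first compare $M^{p,\la}$ with $M_*^{p,\la}$; the underlying computations are the same.
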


\begin{proof} [Sketch of proof]  {For the equivalence of norms, since the proof is similar to that in  \cite[Lemma 2.1]{BKT}, we only show as an example that}
\[
\| f\|_{{ F}^{p}_{1,\lambda}(R)}^p\sim  \sup_{r>R} \frac 1 {r^\lambda}\int_{B(0,r)}|f|^p\,dx.
\]
By Lemma \ref{annulas-covering} when $\al=1$, shells of the form $B(0,2r)\setminus B(0,r)$ and  the ball $B(0,R)$, can be covered by a  bounded number {(independent of $r,R$)} of elements of $\{ B(x,r_x)\}$ where $r_x = \frac 1 8 \max\{R,|x| \}$ and $R \le r$. 
This implies, for instance, that, taking $R\leq r $ and  {$k\ge0$ so that $2^kR\leq r <2^{k+1}R$,}
\[
\frac 1 {r^\lambda} \int_{B(0,r)} |f|^p \,dx\lesssim  \frac 1 {r^\lambda} \sum_{i=0}^{k} (2^{i}R)^\lambda \frac {1}  {(2^{i}R)^\lambda}\! \int_{2^i R \leq |x|\leq 2^{i+1}R}|f|^p\,dx				+  \frac 1 {R^\lambda}\!\int_{B(0,R)} |f|^p\,dx\lesssim \| f\|_{{ F}^{p}_{1,\lambda}(R)}^p,
\]
where we used a finite cover of each shell or ball to pass from the centered integrals to those over elements of $\{ B(x,r_x)\}$. The reverse direction of the norm equivalence is straightforward.

For the equivalence of vanishing statements, (b) and (c) are clearly equivalent and imply (a) by the equivalence of 3 norms. Suppose (a) holds. For any $\e>0$ there is $R>1$ so that  $\sup_{|x|>R}  \frac1{|x|^\la}\!\int_{B(x,  \frac{|x|}2)}|f|^p< \e$. For $r\gg R$, $2^kR\leq r <2^{k+1}R$, we have
\EQN{
\frac 1 {r^\lambda} \int_{B(0,r)} |f|^p \,dx &\le  \frac 1 {r^\lambda} \sum_{i=0}^{k} (2^{i}R)^\lambda \frac {1}  {(2^{i}R)^\lambda}\! \int_{2^i R \leq |x|\leq 2^{i+1}R}|f|^p\,dx				+  \frac 1 {r^\lambda}\!\int_{B(0,R)} |f|^p\,dx
\\
&\le \frac 1 {r^\lambda} \sum_{i=0}^{k-1} (2^{i}R)^\lambda C\e
+  \frac 1 {r^\lambda}C_R
\le C\e +  \frac 1 {r^\lambda}C_R
}
which is less than $C\e$ for $r$ sufficiently large. Hence (a) implies (c).
\end{proof}

The following lemma confirms the embeddings illustrated in Figure \ref{figure.ER}.
{Recall $\mathring {\mathfrak F}^{2}_{\al,\la}$ is defined in \eqref{Ffrak0.def}.}
\begin{lem}\label{Remark.ER}
If $\al_1\la_1=\al_2\la_2=q\in (0,1]$ with $\la_1 < \la_2$ and $0<\al_1,\al_2\leq 1$,
then
\[
\mathring {\mathfrak F}^{2}_{\al_1,\la_1}\subsetneq \mathring {\mathfrak F}^{2}_{\al_2,\la_2}.
\]
\end{lem}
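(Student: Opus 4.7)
The plan is to establish the containment and its strictness separately; both hinge on the constraint $\al_1\la_1=\al_2\la_2=q$, which together with $\la_1<\la_2$ forces $\al_1>\al_2$.

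For the containment I would argue directly at the level of the $F^{2}_{\al,\la}(R)$ norms. With $r_x^{(i)}=\tfrac18\max(R,|x|)^{\al_i}$ for $R\ge 1$, the ball inclusion $B(x,r_x^{(2)})\subset B(x,r_x^{(1)})$ holds, while the normalizing weights satisfy the identity $(r_x^{(i)})^{\la_i}=8^{-\la_i}\max(R,|x|)^q$, so their ratio is a constant independent of $x$ and $R$. This yields the pointwise bound
\[
\frac{1}{(r_x^{(2)})^{\la_2}}\int_{B(x,r_x^{(2)})}|f|^2 \ \le\ 8^{\la_2-\la_1}\,\frac{1}{(r_x^{(1)})^{\la_1}}\int_{B(x,r_x^{(1)})}|f|^2,
\]
and taking $\sup_x$ gives $\norm{f}_{F^2_{\al_2,\la_2}(R)}\lec\norm{f}_{F^2_{\al_1,\la_1}(R)}$ uniformly in $R\ge 1$. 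Containment of both the $\mathfrak F^2$ and the $\mathring{\mathfrak F}^2$ spaces then follows from the definitions \eqref{Ffrak.def}--\eqref{Ffrak0.def}.

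For strictness I would exhibit an explicit separator in the spirit of Example~\ref{exampleS2a}. Fix $\chi\in C^\infty_c(B_1)$ with $\chi\equiv 1$ on $B_{1/2}$, and set
\[
f(x)=\sum_{k\ge 1} a_k\,\chi\bke{\tfrac{x-x_k}{R_k}},\qquad x_k=2^{3k}e_1,\ \ R_k=2^{3k\al_1},\ \ a_k=2^{3k\de}\,2^{-3k(3\al_1-q)/2},
\]
with $\de>0$ small. The spacing $2^{3k}$ keeps the bumps disjoint even when $\al_1=1$. At the center $x_k$, the $\al_1$-ball of radius $R_k/8$ sits inside the bump and, after the $\la_1$-normalization, contributes $\sim a_k^2\cdot 2^{3k\al_1(3-\la_1)}=2^{6k\de}$, which is unbounded in $k$; hence $\norm{f}_{F^2_{\al_1,\la_1}(1)}=\infty$ and in particular $f\notin\mathring{\mathfrak F}^2_{\al_1,\la_1}$. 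The analogous $\al_2$-ball at $x_k$ still lies inside the bump (since $2^{3k\al_2}<R_k$) and contributes $\sim a_k^2\cdot 2^{3k\al_2(3-\la_2)}=2^{-3k[3(\al_1-\al_2)-2\de]}$, which decays in $k$ provided $\de<\tfrac32(\al_1-\al_2)$.

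The delicate point, and what I expect to be the main obstacle, is verifying $\lim_{R\to\infty}\norm{f}_{F^2_{\al_2,\la_2}(R)}=0$ rather than merely $\norm{f}_{F^2_{\al_2,\la_2}}<\infty$. The supremum over $|x|\ge R$ is controlled by the per-bump estimate above. For $|x|<R$ the relevant ball has radius $\tfrac18 R^{\al_2}$ and may intersect several bumps at various scales, so I would split into the regimes $R_k<r_x$ (bump inside ball, per-bump contribution $a_k^2 R_k^3$) and $R_k\ge r_x$ (ball inside bump, per-bump contribution $a_k^2 r_x^3$), noting that at most $\lec \log R$ bumps can intersect the ball thanks to the geometric spacing $|x_{k+1}|/|x_k|=8$. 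Summing the resulting geometric series in $k$ in each regime should produce a polynomial decay $\lec R^{-c}\log R$ with exponent $c=[q(\al_1-\al_2)-2\al_2\de]/\al_1$; the resulting constraint $\de<q(\al_1-\al_2)/(2\al_2)$ is nonempty because $\la_2\le 1$ gives $q\le\al_2$, and is automatically weaker than $\de<\tfrac32(\al_1-\al_2)$, so any sufficiently small $\de>0$ produces the desired $f\in\mathring{\mathfrak F}^2_{\al_2,\la_2}\setminus\mathring{\mathfrak F}^2_{\al_1,\la_1}$.
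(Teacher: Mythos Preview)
Your containment argument coincides with the paper's. For strictness the paper takes a much simpler route: it uses the single radially decreasing function
\[
f(x)=\bigl(|x|^{\,3-(\la_1+\la_2)/2}+1\bigr)^{-1/2}.
\]
Because $|f|^2$ is radially nonincreasing, $\int_{B(x,r)}|f|^2\le\int_{B(0,r)}|f|^2\sim r^{(\la_1+\la_2)/2}$ for every $x$, so for $|x|\le R$ the $F^2_{\al_i,\la_i}(R)$ quantities reduce to the centered one $R^{-q}\int_{B(0,R^{\al_i})}|f|^2\sim R^{\al_i(\la_{3-i}-\la_i)/2}$, which blows up for $i=1$ and vanishes for $i=2$; the region $|x|>R$ is handled by a direct pointwise upper bound on $|f|^2$. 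No regime--splitting, no geometric series, and no side condition on $\la_2$ are required.

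Your bump construction can also be made to work, but two points need repair. First, the disjointness claim fails at $\al_1=1$: with $R_k=2^{3k}$ one has $R_k+R_{k+1}=9\cdot 2^{3k}>7\cdot 2^{3k}=|x_{k+1}|-|x_k|$. Replacing $R_k$ by $c\,2^{3k\al_1}$ with any $c\in[\tfrac14,\tfrac79)$ restores disjointness for all $\al_1\le 1$ while still keeping the $\al_1$--ball $B(x_k,|x_k|^{\al_1}/8)$ inside the plateau $\{\chi=1\}$ of the $k$-th bump. Second, the assertion ``$\la_2\le 1$'' is not a hypothesis of the lemma; fortunately it is unnecessary, since both constraints $\de<\tfrac32(\al_1-\al_2)$ and $\de<q(\al_1-\al_2)/(2\al_2)$ have strictly positive right-hand sides, so any sufficiently small $\de>0$ satisfies both, irrespective of which bound is the smaller. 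The paper's radial example sidesteps all of this bookkeeping; the trade-off is that your separator is built from compactly supported pieces, which could be convenient if one also wanted the example to lie in $\mathring F^2_{\al_2,\la_2}$ rather than only in $\mathring{\mathfrak F}^2_{\al_2,\la_2}$.
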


\begin{proof}

Let $\al_1\la_1=\al_2\la_2=q$ with $\la_1 < \la_2$. Then
\EQN{
&\sup_{x\in \R^3} \frac 1 {\max(R ,|x| )^{\al_2\la_2 }}  \int_{B(x, \max(R,|x|)^{\al_2}/8  )} |f|^2\,dx \\\quad &\leq \sup_{x\in \R^3} \frac 1 {\max(R ,|x| )^{\al_1\la_1 }} \int_{B(x, \max(R,|x|)^{\al_1}/8)} |f|^2\,dx.
}
It implies
\[
\| f\|_{F^{2}_{\al_2,\la_2} (R)}\leq \| f\|_{F^{2}_{\al_1,\la_1} (R)},
\]
and so
\[
\mathring {\mathfrak F}^{2}_{\al_1,\la_1}\subset \mathring {\mathfrak F}^{2}_{\al_2,\la_2}.
\]

We now show this inclusion is strict. Let
\[
f(x) = \frac 1 { \sqrt{   |x|^{3- \frac {\la_1} 2 -\frac {\la_2}2 } +1  }   }.
\]
We have that
\[
\frac 1 {R^{\al_1\la_1}} \int_{B(0,R^{\al_1})} |f|^2\,dx \sim R^{\al_1(\la_2-\la_1)/2},
\]
which is not bounded in $R$. Hence, $f\notin   {\mathfrak F}^{2}_{\al_1,\la_1}$.

On the other hand, since $f$ is decreasing in $|x|$,   we have
\[
\sup_{|x|\leq R} \int_{B(x,R^{\al_2})}|f|^2\,dx \leq \int_{B(0,R^{\al_2})} |f|^2\,dx,
\]
and
\[
\frac 1 {R^{\al_2\la_2}} \int_{B(0,R^{\al_2})} |f|^2\,dx \sim R^{\al_2(\la_1-\la_2)/2}.
\]
This vanishes as $R\to \I$.
Additionally note that \EQN{
\frac 1 {|x|^{\al_2 \la_2 }} \int_{B(x,|x|^{\al_2}/8)}|f|^2 \,dx&\lesssim |x|^{3\al_2       -\al_2\la_2 } \bigg(\frac 1 { |x|^{\al_2} (|x|^{1-\al_2} - \frac 1 8)  } \bigg)^{3- \frac {\la_1} 2 - \frac {\la_2} 2 }
\\&=|x|^{  \al_2( \la_1- \la_2)/2}\bigg(	\frac 1 {|x|^{1-\al_2} -\frac 1 8}\bigg)^{3- \frac {\la_1} 2 - \frac {\la_2} 2}.
}
After applying  $\sup_{|x|>R}$ to the right-hand side above we get something that vanishes as $R\to \I$. It follows that $f\in \mathring {\mathfrak F}^{2}_{\al_2,\la_2}$.
\end{proof}

\begin{remark} \label{rem2.11}
In the preceding proof a centered definition of the norm of  $M^{2,q}$, {as in Lemma \ref{Mpq-centered}}, is used.  In this remark we show that there is no  centered definition of $\|\cdot\|_{F^{p}_{\al,\la}}$ when $\al< 1$. We will not use this observation anywhere, but hope that it is illuminating to the reader.
By this we mean that there exists no function $\Phi$ so that
\EQ{\label{eq2.16}
\|f\|_{F^p_{\al,\la}{(R=1)}}^p \sim \sup_{r\geq 1}\frac 1 {\Phi(r)} \int_{B(0,r)}|f|^p\,dx.
}
Without loss of generality we only consider $F^1_{\al,\la}$ {with $p=1$.}
Fix $\al$ and $\la$.
Let $\mathscr C_r$ denote the collection of all covers of $B(0,r)$ by elements of {$\cB_1$}. 
Let
\[
\Psi(r) = \inf_{\bar C\in \mathscr C_r} \sum_{B(x,r_x)\in \bar C} r_x^\la, \quad
{r_x = \frac18 \max(1,|x|)^\al.}
\]
Then, 
\EQ{\label{eq2.17}
\sup_{r\geq 1} \frac 1 {\Psi(r)} \int_{B(0,r)} |f| \lesssim \|f\|_{F^1_{\al,\la}}.
}
We may approximate $\Psi(r)$  using the function $g$ in the second proof of Lemma \ref{lem2.9}, {defined above \eqref{gintinf}.} Note that we may choose $\td c$ {in \eqref{gintinf}} so that
\EQ{\label{1216}
\td c^{-1}\leq \sup_{x\in \R^3}\frac 1 {r_x^\la} \int_{B(x,r_x)}g(y)\,dy\leq \td c.
}
The upper bound is new compared to what is written in the second  proof of Lemma \ref{lem2.9}, but the supporting estimate is similar to the lower bound, {using $R=1$}.
Choosing a particular cover $\mathscr C$ of $B(0,r)$ in which each cover element is overlapped by a bounded number $\bar N$ of other cover elements, we obtain
\[
\Psi(r) \leq \sum_{B(y,r_y) \in \mathscr C} r_{y}^\la \leq \td c \sum_{B(y,r_y) \in \mathscr C}\int_{B(y,r_y)}g(x)\,dx  \leq \td c \bar N\int_{B(0,2r)} g(x)\,dx \sim r^{3-3\al +\al \la}. 
\]
On the other hand, for any given cover $\mathscr C'$ of $B(0,2r)$, we have 
\[
r^{3-3\al +\al \la} \sim \int_{B(0,2r)} g(x)\,dx\leq \sum_{B(y,r_y)\in\mathscr C'}\int_{B(y ,r_y)}  g \,dx \leq 
\td c \sum_{B(y,r_y)\in\mathscr C'}r_y^\la,
\]
{using \eqref{1216} in the last inequality.}
This implies 
\[
\Psi(r) \sim  r^{3-3\al +\al \la}.
\]

\newcommand{\barPhi}{\Phi}
 
We now argue that if $\barPhi(r)$ satisfies 
\EQ{\label{ineq.needed}
\sup_{r\geq 1} \frac 1 {\barPhi(r)} \int_{B(0,r)} |f| \leq \bar c \|f\|_{F^1_{\al,\la}},
} 
for some $\bar c>0$ and all $f\in F^1_{\al,\la}$, 
then $ \Psi(r) \lesssim \barPhi(r)$.   Using the function $g$ again, we have 
\[
 {\Psi(r) \|g\|_{F^1_{\al,\la}}\lesssim }\int_{B(0,r)} g \,dx.
\]
Since $g\in F^1_{\al,\la}$ we have 
\[
\frac 1 {\barPhi(r)}\int_{B(0,r)}g \,dx\leq \bar c \|g\|_{F^1_{\al,\la}} \lesssim\frac 1 {\Psi(r)} \int_{B(0,r)} g \,dx,
\]
implying $\Psi(r)\lesssim \barPhi(r)$. This  {and \eqref{eq2.17} imply for all $f\in F^{1}_{\al,\la}$}
\EQ{\label{ineq.forContradiction}
\sup_{r\geq 1} \frac 1 {\barPhi (r)} \int_{B(0,r)} |f|   \lesssim \sup_{r\geq 1} \frac 1 {  \Psi(r)} \int_{B(0,r)} |f| \leq \|f\|_{F^1_{\al,\la}}.
}

We conclude with an argument by contradiction.  If there exists a weighted definition of the norm of $F^{1}_{\al,\la}$ with weight $\barPhi$ {so that \eqref{eq2.16} holds}, then 
{the left term of \eqref{ineq.forContradiction} is equivalent to $\|f\|_{F^1_{\al,\la}}$,}
and the middle term of \eqref{ineq.forContradiction} therefore provides an equivalent norm to that of $F^{1}_{\al,\la}$. But, the middle term is exactly the norm of $M^{1,3-3\al +\al\la}$ {because of $\Psi(r) \sim  r^{3-3\al +\al \la}$ and Lemma  \ref{Mpq-centered}.} So, if there exists a weighted definition of the norm of $F^{1}_{\al,\la}$, then $F^{1}_{\al,\la}= M^{1,3-3\al +\al\la}$ as normed spaces.
This leads to a contradiction if $\al<1$, as evidenced by the function
\[
h(x)= |x|^{2-3\al +\al\la} \chi_{\{|x - (x_1,0,0)|<1\}}(x) \in M^{1,3-3\al +\al\la}\setminus F^1_{\al,\la}.
\]
{Here the region $|x - (x_1,0,0)|<1$ is the cylinder $x^2_2+x_3^2<1$.}
Hence, there is no centered definition of $\|\cdot\|_{F^1_{\al,\la}}$ when $\al<1$.\hfill $\square$ 
\end{remark}

\section{Pressure estimate}\label{sec3}

In this section we prove the pressure estimate in terms of the global $F^p_{\al,\la}(\R^3)$ norm of $u$ with $p=2$,  %
defined in \eqref{Fal.def}.

We will use the following function (defined in \eqref{aln.def}) in the pressure estimate:
\EQ{\label{aln.def2}
a_n(t)=\norm{u(t)}_{F_{\al,\la}^{2}(R=2^n)}^2 =
\sup_{x\in\R^3} \frac1{r_x^\la}\int_{B(x, r_x)} |u(\cdot,t)|^2 ,\qquad r_x=\frac18\max(2^n,|x|)^{\al}.
}

\begin{lem}[Annulus integral]\label{annulas-integral}
Fix $n \in \NN_0$ and let $a_n(t)$ be defined as \eqref{aln.def2}. For integer $l\ge n$,
we have
\EQ{\label{th:annulas-integral}
 \int_{2^l< |y|<2^{l+1}}   |u_iu_j(y,t)|\,dy
  \le C  2^{l(\al \la +3-3\al)}   a_n(t) ,
}
where $C$ is independent of $n$ and $l$. Moreover, we have
\EQ{\label{th:ball-integral}
\int_{B(0,2^n)}|u_iu_j(y,t)|\,dy
  \le C  2^{n(\al \la +3-3\al)}   a_n(t).
}
\end{lem}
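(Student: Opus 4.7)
The strategy is to reduce both integrals to sums over a controlled number of balls from the family used to define $a_n(t)$, and then apply the defining bound on each ball.

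For the annulus estimate, I would fix an integer $l \ge n$ and note that the region $\{2^l < |y| < 2^{l+1}\}$ is contained in the annulus $A_{2^l} = B_{2^{l+1}}\setminus B_{2^l}$. Since $l \ge n$, we have $\rho := 2^l \ge R := 2^n$, so Lemma \ref{annulas-covering} applies with this choice of $R$ and $\rho$: there is a cover
\[
A_{2^l} \subset \bigcup_{k=1}^{N(2^l)} B(x_k, r_{x_k}), \qquad N(2^l) \le C\,2^{l(3-3\al)},
\]
with $x_k \in A_{2^l}$, so that $|x_k| \sim 2^l \ge 2^n$ and hence $r_{x_k} = \tfrac18 |x_k|^\al \sim 2^{l\al}$. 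On each such ball, the pointwise bound $|u_i u_j| \le |u|^2$ and the definition \eqref{aln.def2} of $a_n$ give
\[
\int_{B(x_k,r_{x_k})} |u_i u_j(y,t)|\,dy \le \int_{B(x_k,r_{x_k})} |u(y,t)|^2\,dy \le r_{x_k}^\la\, a_n(t) \lec 2^{l\al \la}\, a_n(t).
\]
Summing these bounds over the at most $C\,2^{l(3-3\al)}$ cover elements produces the claimed $C\,2^{l(\al\la + 3 - 3\al)} a_n(t)$.

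For the inequality on $B(0,2^n)$, I would apply the second part of Lemma \ref{annulas-covering} with $R = 2^n$: there is a cover of $B_R$ by $N(R) \le C\,2^{n(3-3\al)}$ balls $B(x_k, r_{x_k})$, $x_k \in B_R$, so $r_{x_k} = \tfrac18 \max(R,|x_k|)^\al = \tfrac18 R^\al \sim 2^{n\al}$. The same estimate as above on each ball, followed by summation, yields the second bound. The only subtle point is ensuring that one uses the full definition of $r_x$ from \eqref{aln.def2} (which has $R = 2^n$ as the floor) so that the balls produced by the covering lemma match exactly the balls controlled by $a_n(t)$; this is automatic since the same threshold $2^n$ appears in both places.

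I do not expect a significant obstacle here: the proof is essentially a bookkeeping exercise combining the covering lemma with the supremum definition of $a_n$. Minor attention is needed only to verify that the overlap of balls in the cover does not inflate the constant (which the covering lemma already guarantees, since the sum is over finitely many balls with a uniform multiplicity bound absorbed into $C$), and that the definition of $a_n$ applies uniformly over all balls in the family $\cB_n$ regardless of whether $|x_k| \le 2^n$ or $|x_k| \gg 2^n$.
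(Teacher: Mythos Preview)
Your proposal is correct and follows essentially the same approach as the paper: cover the annulus (respectively the ball $B_{2^n}$) using Lemma~\ref{annulas-covering}, bound the integral on each covering ball by $r_{x_k}^\la a_n(t)\lec 2^{l\al\la}a_n(t)$ via the definition of $a_n$, and sum over the $\lec 2^{l(3-3\al)}$ balls. The paper's proof is slightly terser but identical in substance.
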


\begin{proof}
By Lemma \ref{annulas-covering}, we can cover $A_{2^l}$ by $N(2^l)$ balls $B_{l,k}$, $k=1,\ldots,N(2^l)$, with center $x_{l,k}$ inside $A_{2^l}$ and radius $r_{x_{l,k}}\sim 2^{l\al}$. Hence the integral in \eqref{th:annulas-integral} is bounded by
\[
\sum_{k=1}^{N(2^l)}   \int_{B_{l,k}}   |u_iu_j(y,t)|\,dy
 \lec  \sum_{k=1}^{N(2^l)}   2^{l\al \la} a_n(t)
\lec    2^{l(3-3\al)}  2^{l\al \la} a_n(t) .
\]
The integral in \eqref{th:ball-integral} is bounded similarly.
\end{proof}
For a given ball $B$ of radius $r$, let $B^*$ and $B^{**}$ denote concentric balls of radii $\frac 98r$ and $\frac 54r$, respectively.
 Recall the set $\cB_n$ of balls defined for $n \in \NN_0$ in \eqref{cCn.def},
\Eq{
\cB_n = \bket{ B(z,r_z): \ z \in \R^3} ,\qquad r_x=\frac18\max(2^n,|x|)^{\al}.
}
\begin{lem}[Pressure estimate]\label{lemma.pressure}
Fix $n\in \NN_0$. %
 Assume $u$ is a local energy solution to \eqref{eq.NSE} on $\R^3\times [0,T)$
with an associated pressure $p$.
Then, for $t\in(0,T)$ and any ball $B\in \cB_n$,
\EQS{
\label{EB09}
&\frac 1 {|B|^{1/3}} \int_0^t \int_{B^*} |p-p_B(s) |^{3/2}\,dx\,ds
\\& \quad
\leq
C \sup_{B'\in \cB_n ; \, B'\cap B^{**}\neq \emptyset} \bke{\frac 1 {|B|^{1/3}} \int_0^t \int_{B'} |u|^3\,dx\,ds}
+ C
 |B|^{\frac 12\la -\frac 56}
\int_0^t  a_n(s)^{3/2} \,ds,
}
where $C$ is independent of $n$ and $B$.
\end{lem}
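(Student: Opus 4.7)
Write, for $x\in B^*$,
\[
p(x,t)-p_B(t) = -\tfrac13|u(x,t)|^2 + P_N(x,t) + P_F(x,t),
\]
using \eqref{p.dec}, where $P_N(x,t) = \mathrm{p.v.}\!\int_{B^{**}} K_{ij}(x-y)u_iu_j(y,t)\,dy$ and $P_F$ is the kernel-difference term over $y\notin B^{**}$. The local term is bounded pointwise by $|u|^2$, and $P_N$ is a Calder\'on--Zygmund operator applied to $(u_iu_j)\mathbbm{1}_{B^{**}}$, so $L^{3/2}$-boundedness gives $\||u|^2\|_{L^{3/2}(B^*)} + \|P_N\|_{L^{3/2}(\R^3)} \lec \|u\|_{L^3(B^{**})}^2$. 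Since $B^{**}$ can be covered by a bounded number of balls $B'\in \cB_n$ meeting $B^{**}$, each of radius comparable to $r_z$ by comment (3) after Definition \ref{Fpalla.def}, integrating in time and dividing by $|B|^{1/3}\sim r_z$ yields the first term on the right of \eqref{EB09}.

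The remaining task is to control $P_F$. For $x\in B^*$ and $y\notin B^{**}$ the mean value estimate applied to the Calder\'on--Zygmund kernel $K_{ij}$ gives
\[
|K_{ij}(x-y)-K_{ij}(x_B-y)|\lec \frac{|x-x_B|}{|x_B-y|^4}\lec \frac{r_z}{|x_B-y|^4},
\]
so it suffices to prove the pointwise bound
\[
|P_F(x,t)|\lec r_z \!\int_{y\notin B^{**}}\! \frac{|u(y,t)|^2}{|x_B-y|^4}\,dy \lec r_z^{\la-3}\, a_n(t),\qquad x\in B^*.
\]
Indeed, raising to the $3/2$ power and integrating over $B^*\times(0,t)$ with $|B^*|\sim r_z^3$, then dividing by $|B|^{1/3}\sim r_z$, gives $r_z^{\frac32\la-\frac52}\!\int_0^t a_n^{3/2}\,ds \sim |B|^{\frac12\la-\frac56}\!\int_0^t a_n^{3/2}\,ds$, exactly matching the second term on the right of \eqref{EB09}.

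The pointwise bound is obtained by a dyadic decomposition split into an inner regime $|y|\lec\max(2^n,|x_B|)$ and an outer regime $|y|\gg\max(2^n,|x_B|)$. In the inner regime, decompose into shells $S_k=\{y:\,2^k r_z \le |y-x_B| < 2^{k+1}r_z\}$; because $|y|$ stays within a fixed multiple of $\max(2^n,|x_B|)$, each covering ball $B(y,r_y)\in\cB_n$ has local scale $r_y\sim r_z$, so $O(2^{3k})$ such balls suffice and each contributes $\lec r_z^\la\, a_n(t)$. Multiplying by the weight $|x_B-y|^{-4}\sim (2^k r_z)^{-4}$ produces a convergent geometric series in $2^{-k}$ summing to $\lec r_z^{\la-4}\,a_n(t)$. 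In the outer regime we invoke Lemma \ref{annulas-integral} directly on shells $A_{2^l}$ about the origin, where $|x_B-y|\sim 2^l$: each shell contributes $\lec 2^{l(\al\la-1-3\al)}\,a_n(t)$, and the constraints $\la\le 2$ and $\al\le 1$ force the exponent to be at most $-1-\al<0$, so the sum is dominated by its smallest-$l$ term, which is $\lec \max(2^n,|x_B|)^{\al\la-1-3\al}\,a_n(t)\lec r_z^{\la-4}\,a_n(t)$ (using $\al\le 1$ at the last step).

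The main obstacle is the bookkeeping in these dyadic sums, which splits naturally into the subcases $|z|\le 2^n$ (with $r_z\sim 2^{n\al}$) and $|z|>2^n$ (with $r_z\sim |z|^\al$); the covering scale $r_y$ of $\cB_n$-balls changes qualitatively at the transition $|y|=2^n$, so one must verify that the geometric-series estimates assemble consistently across the transition. The uniform-in-$n,B$ constant in the statement then follows from the universal geometric-series bounds used throughout.
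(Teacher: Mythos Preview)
Your overall strategy---decompose $p-p_B$ via \eqref{p.dec}, control the near part by Calder\'on--Zygmund, and establish the pointwise bound $|P_F|\lec r_z^{\la-3}a_n(t)$---is exactly the paper's, and your outer-regime estimate (shells about the origin via Lemma~\ref{annulas-integral}) is correct. The gap is in your inner-regime argument when $|z|>2^n$. You assert that because $|y|$ stays within a fixed multiple of $\max(2^n,|z|)=|z|$, the local scale satisfies $r_y\sim r_z$; but that hypothesis gives only $r_y\lec r_z$. When the shell $S_k$ has $2^k r_z$ comparable to $|z|$ (so $k$ is near $(1-\al)\log_2|z|$), it contains points $y$ close to the origin where $r_y\sim 2^{n\al}$ can be arbitrarily smaller than $r_z\sim|z|^\al$. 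For such shells your $O(2^{3k})$-ball count is wrong, and covering naively with the smallest available $\cB_n$-balls introduces an extra factor $(r_z/r_y)^{3-\la}$ that the geometric series in $2^{-k}$ cannot absorb.

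The paper fixes this by a further split when $|z|>2^{n+1}$: region~I with $|y|<|z|/2$, region~II with $|y|\sim|z|$, and region~III with $|y|\gg|z|$ (your outer regime). In region~II one genuinely has $r_y\sim r_z$, and your shells-around-$z$ covering (the paper's $J_{2b}$) yields the dominant contribution $r_z^{\la-3}a_n$. In region~I the kernel weight is pinned at $\sim|z|^{-4}$ and the mass $\int_{|y|<|z|/2}|u|^2$ is bounded by Lemma~\ref{annulas-integral} using shells about the \emph{origin}, giving the subdominant term $|z|^{\al\la-2\al-1}a_n\lec r_z^{\la-3}a_n$ (the last inequality is equivalent to $\al\le 1$). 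Your final paragraph anticipates a case split but treats it as bookkeeping; in fact the shells-around-$z$ mechanism must be confined to the region $|y|\sim|z|$, and the portion of the inner regime near the origin requires the separate origin-centered estimate.
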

\emph{Remark.}\quad The factor of the last term in \cite[Lemma 3.1]{BKT} is $ |Q|^{\la/2-5/6} (\log \bka{|Q|^{1/3}/2^n})^{3/2 }$
which agrees with $ |B|^{\frac 12\la -\frac 56}$ when $\al=1$ up to a log factor.
In fact, the  log factor in \cite{BKT} can be avoided, as shown by our proof. The log factor in \cite{BKT}
appears because its proof replaces $|Q'|^{\la/3}$ by $|Q|^{\la/3}$ and count all cubes together. Our proof counts balls in each annulus and then takes the sum. The size of $|Q'|^{\la/3}$ in each annulus is different and hence the sum is a geometric series. The idea of using a convergence (non-geometric) series also helps to improve the $J_{2b}$ estimate \eqref{J2.est} below.

\begin{proof}
For a given ball $B=B(z,r_z)\in\cB_n$, we have the local pressure decomposition for $(x,t) \in B^* \times(0,T)$,
\EQS{
\label{EQ01}
p(x,t)-p_B(t) &=   - \frac 13 |u|^2(x,t) +\pv \int_{ y\in B^{**}}  K_{ij}(x-y)   u_i u_j(y,t)\,dy
\\&\quad
+  \int_{ y\notin B^{**}}  \bkt{ K_{ij}(x-y) - K_{ij}(z - y  )}  u_i u_j(y,t)\,dy
\\
& =: p_\text{near}(x,t)+ p_\text{far}(x,t).
}
For $p_\text{near}$, the Calder\'on-Zygmund inequality implies
\EQN{
\frac 1 {|B|^{1/3}} \int_0^t \int_{B^*} |p_\text{near}|^{3/2}\,dx\,ds
  &\leq
\frac C {|B|^{1/3}} \int_0^t \int_{B^{**}} |u|^3 \,dx\,ds.
}
Since $B^{**}$ can be covered by $N$ balls $B'\subset\cB_n$ with $N$ independent of $n$ and $B$, no matter whether
 $|z|>2^{n+1}$
or not, we have
  \[
       \frac 1 {|B|^{1/3}} \int_0^t \int_{B^{**}} |u|^3 \,dx\,ds
     \leq
      C \sup_{B'\in\cB_n,B'\cap B^{**}\neq \emptyset}\frac 1 {|B|^{1/3}}\int_0^t \int_{B'}|u|^3\,dx\,ds.
   \]

For $p_\text{far}$, we have
\Eq{
|p_\text{far}(x,t)| \lec
 \int_{ y\notin B^{**}}  \frac {r_z}{|z-y|^4}  |u_iu_j(y,t)|\,dy.
}
We first consider $B=B(z,r_z)$ with $|z|>2^{n+1}$. Choose integer $m > n$ so that $2^m < |z| \le 2^{m+1}$.
We have
\EQN{
|p_\text{far}(x,t)| &\lec
\bket{\int_{\textup{I} }+ \int_{ \textup{II} } + \int_{\textup{III} }} \frac {r_z}{|z-y|^4}  |u_iu_j(y,t)|\,dy = J_1+J_2+J_3.
}
where
\EQN{
\textup{I} &= \{ y: |y|<2^{m-1}\}
\\
\textup{II} &= \{ y: 2^{m-1}\le|y|\le 2^{m+2} ,\ y\notin B^{**}\}
\\
\textup{III} &= \{ y: |y|>2^{m+2}\}.
}

For $J_1$, we have
\[
J_1 \lec\frac {r_z}{|z|^4} \int_{\textup{I}}   |u_iu_j(y,t)|\,dy.
\]
If $m \ge n+2$,
\EQN{
J_1 &\lec\frac {r_z}{|z|^4} \bke{\int_{B(0,2^n)} +  \sum_{l=n}^{m-2}  \int_{2^l< |y|<2^{l+1}}}   |u_iu_j(y,t)|\,dy.
}
By Lemma \ref{annulas-integral},
\EQN{
J_1
& \lec 2^{m(\al-4)} \bke{2^{n(3-3\al+\al\la)}  a_n(t) +  \sum_{l=n}^{m-2} 2^{l(\al \la+3-3\al)}  a_n(t)   }
\\
& \lec  2^{m(\al \la -2\al-1)} a_n(t) .
}
If $m=n+1$, %
we do not need the sum and have the same bound.

For $J_3$, by Lemma \ref{annulas-integral},
\EQN{
J_3 &\lec\int_{\textup{III}}   \frac {r_z}{|y|^4}  |u_iu_j(y,t)|\,dy
\\
&\lec  \sum_{l=m+2}^{\infty}  \int_{2^l< |y|<2^{l+1}} \frac {2^{m\al}}{2^{4l}}  |u_iu_j(y,t)|\,dy
\\
& \lec  \sum_{l=m+2}^{\infty} \frac {2^{m\al}}{2^{4l}}   2^{l(\al \la+3-3\al)}    a_n(t) =  \sum_{l=m+2}^{\infty} 2^{m\al }  2^{l(\al \la-1-3\al)} a_n(t)
.
}
The sum converges since
$
\al \la < 1+ 3 \al$
(which is always valid if $\la \le 3$),
and we get
\[
J_3 \lec 2^{m(\al \la -2\al-1)} a_n(t) .
\]

For $J_2$, %
we further divide the region II to two parts:
\[
\II_a = \{ y \in \II, \ |y-z|\ge 2^{m-1}\}, \quad
\II_b = \{ y \in \II, \ r_z<|y-z|<2^{m-1}\},
\]
and decompose $J_2=J_{2a}+J_{2b}$ over these two subregions. The estimate of $J_{2a}$ is the same as the integral over the annulus $A_{2^{m+1}}$ which is part of $J_3$. For $J_{2b}$, we cover $\II_b$ by $K$ annuli
\[
\II_b \subset \cup_{k=1}^K \mathcal{A}_k, \quad  \mathcal{A}_k = \bket{ y: kr_z < |y-z| \le (k+1)r_z},
\]
with $K=  \lceil 2^{m-1}/ r_z \rceil \le C2^{m(1-\al)} $.  Each annulus $ \mathcal{A}_k$ can be covered by $N_k$ balls $B_{k,j}=B(x_{k,j}, r_{x_{k,j}})$, $j=1,\ldots,N_k$ with $N_k \lec k^2$
since $|x_{k,j}-z| \sim kr_z$ and $r_{x_{k,j}} \sim r_z$.

By the definition \eqref{aln.def2} of $a_n(t)$,
we have
\EQS{\label{J2.est}
J_{2b} &\lec \sum_{k=1}^K \sum_{j=1}^{N_k}  \frac {r_z}{(kr_z)^4}  \int_{B_{k,j}} |u_iu_j(y,t)|\,dy
\\
&\lec  \sum_{k=1}^Kk^2  \frac {r_z}{(kr_z)^4} r_z^\la a_n(t)
\lec  r_z^{\la-3} a_n(t).
}
Combining all together the final estimates for $J_1,J_{2a}, J_{2b}$, and $J_3$,
\[
J_1 + J_{2a}+J_3 \lec 2^{m(\al \la -2\al-1)} a_n(t),\quad
J_{2b} \lec r_z^{\la-3} a_n(t).
\]
We see that $J_{2b}$ is the dominant term, and
we conclude, when $|z|>2^{n+1}$,
\EQ{\label{eq2.5}
|p_\text{far}(x,t)| \lec  r_z^{\la-3} a_n(t).
}

\bigskip
For the case $B=B(z,r_z),~|z|\le 2^{n+1}$, we have
\EQN{
|p_\text{far}(x,t)| &\lec
\bket{\int_{\textup{I} }+ \int_{ \textup{II} } }  \frac {r_z}{|z-y|^4}  |u_iu_j(y,t)|\,dy = I_1+I_2,
}
where
\[
\textup{I} = \{ y:~y\notin B^{**}, |y|<2^{n+2}\},\qquad
\textup{II} = \{ y:~ y\notin B^{**},|y|>2^{n+2}\}.
\]
Term $I_2$ is estimated similarly to $J_3$ and has the same bound
\[
I_2 \lec  2^{n(\al \la-1-2\al)} a_n(t).
\]
Term $I_1$ is estimated similarly to $J_2$ and has the same bound \eqref{J2.est}. Thus for all cases, we have estimate \eqref{eq2.5} for $p_\text{far}(x,t)$, and
\EQS{
\frac 1 {|B|^{1/3}} \int_0^t \int_{B^*} |p_\far |^{3/2}\,dx\,ds & \lec
\frac 1 {|B|^{1/3}} \int_0^t |B| \cdot ( r_z ^{( \la -3)} a_n(s))^{3/2} \,ds
\\
&=  |B|^{2/3} |B| ^{\frac 12(\la-3)} \int_0^t a_n(s)^{3/2} \,ds .
}
Combining these estimates yields \eqref{EB09}, and the proof is concluded.
\end{proof}

\section{A priori bounds}\label{sec4}

The next is just \cite[Lemma 3.2]{BKT}.
\begin{lem}\label{lemma.cubic}
Let $u:\R^3\times (0,T)$ and $\e>0$ be given. Then, there exists a constant $C(\e)$ so that,  for any ball $B\subset \R^3$,
\EQN{
\frac 1 {|B|^{1/3}}\int_0^t \int_B |u|^3\,dx\,ds &\leq C(\e) |B|^{\la-4/3}\int_0^t \bigg(  \frac 1 {|B|^{\la/3}} \int_B |u|^2\,dx\bigg)^3 \,ds
\\&+ \e \int_0^t \int_B |\nb u|^2\,dx\,ds
\\&+ C |B|^{\la/2- 5/6} \int_0^t \bigg( \frac 1 {|B|^{\la/3}} \int_B |u|^2\bigg)^{3/2}\,ds.
}
\end{lem}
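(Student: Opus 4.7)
The plan is to reduce the cubic term to a combination of the energy-type quantity $\|\nabla u\|_{L^2(B)}^2$ and powers of the scaled $L^2$-mass $|B|^{-\la/3}\|u\|_{L^2(B)}^2$ via a Gagliardo--Nirenberg inequality on a ball, followed by Young's inequality. This is the standard ``cubic interpolation'' trick used in local energy theory, and since the paper says this is just \cite[Lemma 3.2]{BKT}, I expect the argument to be essentially routine.

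First, I would fix a ball $B\subset\R^3$ of radius $r$, so $|B|\sim r^3$, and recall the Gagliardo--Nirenberg inequality on $B$:
\[
\|u\|_{L^3(B)} \le C\bke{ \|\nb u\|_{L^2(B)}^{1/2}\|u\|_{L^2(B)}^{1/2} + r^{-1/2}\|u\|_{L^2(B)}}.
\]
Cubing and dividing by $|B|^{1/3}\sim r$ yields
\[
\frac{1}{|B|^{1/3}}\int_B |u|^3\,dx \;\lec\; r^{-1}\|\nb u\|_{L^2(B)}^{3/2}\|u\|_{L^2(B)}^{3/2} + r^{-5/2}\|u\|_{L^2(B)}^3.
\]

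Next I would apply Young's inequality with exponents $4/3$ and $4$ to the first term, writing $a=\|\nb u\|_{L^2(B)}^{3/2}$, $b=r^{-1}\|u\|_{L^2(B)}^{3/2}$, to obtain, for any $\e>0$,
\[
r^{-1}\|\nb u\|_{L^2(B)}^{3/2}\|u\|_{L^2(B)}^{3/2} \;\le\; \e\|\nb u\|_{L^2(B)}^2 + C(\e)\,r^{-4}\|u\|_{L^2(B)}^6.
\]
Combining the two estimates and using $r\sim |B|^{1/3}$ to rewrite $r^{-4}\sim |B|^{-4/3}$ and $r^{-5/2}\sim |B|^{-5/6}$ gives the pointwise-in-time bound
\[
\frac{1}{|B|^{1/3}}\int_B |u|^3\,dx \;\lec\; \e\|\nb u\|_{L^2(B)}^2 + C(\e)|B|^{-4/3}\|u\|_{L^2(B)}^6 + C|B|^{-5/6}\|u\|_{L^2(B)}^3.
\]

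Finally, to match the scale-invariant form in the statement, I would factor out the weight $|B|^{\la/3}$ by writing
\[
|B|^{-4/3}\|u\|_{L^2(B)}^6 = |B|^{\la - 4/3}\bke{\frac{1}{|B|^{\la/3}}\int_B |u|^2}^{3},
\qquad
|B|^{-5/6}\|u\|_{L^2(B)}^3 = |B|^{\la/2 - 5/6}\bke{\frac{1}{|B|^{\la/3}}\int_B|u|^2}^{3/2},
\]
and then integrate in $s\in(0,t)$. This produces the three displayed terms of the lemma.

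I do not anticipate a substantive obstacle: the only thing to verify carefully is that the Gagliardo--Nirenberg constant on $B$ is independent of the radius $r$, which follows by scaling (the $r^{-1/2}\|u\|_{L^2(B)}$ term in the interpolation is precisely the scaling correction), and that the Young exponents are chosen so that the $\|\nb u\|_{L^2}^{3/2}$ factor is promoted to the critical power $2$ absorbed on the left-hand side of the local energy inequality later.
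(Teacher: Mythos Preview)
Your argument is correct and is exactly the standard Gagliardo--Nirenberg plus Young's inequality computation that underlies this estimate; the paper itself gives no proof, simply citing \cite[Lemma~3.2]{BKT}, and your proposal reproduces that argument. The only cosmetic point is that the constants $C$ and $C(\e)$ absorb the $|B|\sim r^3$ comparability constants, which you have implicitly handled.
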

For a ball $B=B (x,r_x)$ we introduce a cut-off function $\phi_B$ which is supported on $B^*$ and identically $1$ on $B$. All such cut-off can be obtained by scaling and translating a single cut-off, in which case we have $|D^k \phi_B| \lesssim r_x^{-k} \sim |B|^{-k/3}$ where suppressed constants are independent of $x$.

Recall \eqref{aln.def} and \eqref{ben.def} that
\[
a_{n}(t)
= \sup_{B\in \cB_n} \frac 1 {|B|^{\la/3}}\int_B |u(x,t)|^2\,dx, \quad
b_{n}(t)=\sup_{B\in \cB_n} \frac 1 {|B|^{\la/3}}\int_0^t\int_{B}|\nb u(x,s)|^2\,dx\,ds.
\]

\begin{lem}[\textit{A priori} inequalities]\label{a.priori}
 Assume $u_0\in F^2_{\al,\la}(\R^n)$ is divergence free and let $(u,p)$ be a local energy solution with initial data $u_0$ on $\R^3\times(0,T)$. Let it also satisfy the following:
\EQ{
\esssup_{0<s<t}a_n(s)+b_n(t)< \infty, \quad \forall t<T.
}
Then for all $t\in(0,T)$ we have the following a priori estimate
\EQ{\label{grad.est.1}
a_n (t) +b_n(t)
\leq Ca_n(0) +C2^{-2n\al} \int_0^t a_n\,ds
+  C 2^{2n\al(\la-2)}\int_0^t a_n^3 \,ds .
}
\end{lem}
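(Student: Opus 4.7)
My plan is to apply the Caffarelli--Kohn--Nirenberg local energy inequality to a spatial cutoff $\phi_B^2$ for each $B = B(z, r_x) \in \cB_n$, divide the result by $|B|^{\la/3}$, and then take the supremum over $B$. Pick $\phi_B \in C^\infty_c(\R^3)$ with $\phi_B = 1$ on $B$, $\supp \phi_B \subset B^*$, $|\nb \phi_B| \lec r_x^{-1}$ and $|\De(\phi_B^2)| \lec r_x^{-2}$, noting $r_x \gec 2^{n\al}$. Using $\phi(x,s) = \phi_B^2(x)\zeta_\e(s)$ with a time cutoff $\zeta_\e$ approximating $\mathbbm 1_{[0,t]}$, and passing to the limit using the strong $L^2_\loc$-continuity at $s=0$ and the weak continuity in time built into the definition, CKN upgrades to
\EQN{
\int |u(t)|^2 \phi_B^2 + 2 \int_0^t\!\!\int |\nb u|^2 \phi_B^2 \le \int |u_0|^2 \phi_B^2 + \int_0^t\!\!\int |u|^2 \De(\phi_B^2) + \int_0^t\!\!\int (|u|^2 + 2p)\, u \cdot \nb(\phi_B^2).
}
After dividing by $|B|^{\la/3}$, the left side bounds $a_n(t) + 2 b_n(t)$ upon supping over $B \in \cB_n$. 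The initial datum term is $\lec a_n(0)$ because $B^*$ is covered by a bounded number of elements of $\cB_n$, and the Laplacian term contributes $\lec r_x^{-2} \int_0^t a_n \lec 2^{-2n\al} \int_0^t a_n$.

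For the flux term I replace $p$ by $p - p_B(s)$ (legitimate since $\div u = 0$ makes $\int u \cdot \nb(\phi_B^2)\,dx = 0$), then H\"older and Young with conjugate exponents $3/2, 3$ combined with $|\nb(\phi_B^2)| \lec r_x^{-1} = |B|^{-1/3}$ dominate the flux by
\EQN{
\frac{1}{|B|^{1/3}} \int_0^t\!\!\int_{B^*} |u|^3 + \frac{1}{|B|^{1/3}} \int_0^t\!\!\int_{B^*} |p - p_B|^{3/2}.
}
I apply Lemma \ref{lemma.pressure} to the second expression, producing finitely many cubic integrals over $B' \in \cB_n$ with $|B'| \sim |B|$, plus the term $C|B|^{\la/2 - 5/6}\int_0^t a_n^{3/2}$, and then invoke Lemma \ref{lemma.cubic} on every cubic integral. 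After dividing by $|B|^{\la/3}$ the total cubic-plus-pressure contribution becomes
\EQN{
C(\e)\,|B|^{2\la/3 - 4/3}\int_0^t a_n^3\,ds \; + \; \e\, b_n(t) \; + \; C |B|^{\la/6 - 5/6}\int_0^t a_n^{3/2}\,ds.
}

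The main obstacle is the intermediate $a_n^{3/2}$ term, which does not appear in \eqref{grad.est.1}; I will eliminate it by Young's inequality with exponents $4$ and $4/3$. The hypothesis $\la \le 2$ keeps both $|B|$-exponents non-positive, so from $|B| \gec 2^{3n\al}$ I obtain $|B|^{2\la/3-4/3} \lec 2^{2n\al(\la-2)}$ and $|B|^{\la/6-5/6} \lec 2^{n\al(\la/2-5/2)}$. The dimensional miracle is that $(2^{2n\al(\la-2)})^{1/4}(2^{-2n\al})^{3/4} = 2^{n\al(\la/2 - 5/2)}$, so Young with these weights gives
\EQN{
2^{n\al(\la/2-5/2)}\, a_n^{3/2} \le \tfrac14\,2^{2n\al(\la-2)}\,a_n^3 + \tfrac34\,2^{-2n\al}\,a_n.
}
Absorbing $\e\, b_n$ into the left side, taking the supremum over $B \in \cB_n$, and regrouping constants then produces exactly \eqref{grad.est.1}. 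The condition $\la \le 2$ enters precisely to keep the $|B|$-exponents non-positive; otherwise the bounds would blow up as $|B|$ enlarges and no uniform constant in $n$ could be extracted.
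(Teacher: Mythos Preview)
Your proof is correct and follows essentially the same approach as the paper: local energy inequality with a cutoff on $B^*$, then Lemmas~\ref{lemma.pressure} and~\ref{lemma.cubic}, then Young's inequality with exponents $4,4/3$ to remove the $a_n^{3/2}$ term, then $\la\le2$ to replace $|B|$-powers by $2^{n\al}$-powers. The only (cosmetic) differences are that the paper uses $\phi_B$ rather than $\phi_B^2$ and performs the Young splitting at the $|B|$-level before passing to $2^{n\al}$; also, in your last sentence the correct order is to take the supremum over $B$ first and \emph{then} absorb $\e\,b_n(t)$, since for a fixed $B$ the left side need not dominate $b_n(t)$.
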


\begin{proof}
Fix $B\in \mathcal B_n$. The local energy inequality reads
\EQN{
&\int |u(t)|^2 \phi_B + 2\int_0^t \int |\nb u|^2 \phi_B \leq  \int |u_0|^2\phi_B
\\&\qquad + \int_0^t\!\int |u|^2 \Delta \phi_B  + \int_0^t\! \int \big( |u|^2 u\cdot \nb \phi_B + 2(p-p_B) u\cdot \nb \phi_B   \big).
}
Observe that
\EQN{
 \int_0^t \int |u|^2\Delta \phi_B\lesssim |B|^{-2/3} \int_0^t \int_{B^*} |u|^2.
}
But, $B^*$ can be covered by $N$ balls from $\mathcal B_n$ with $N$ independent of $n,B$, and so
\EQN{
 \int_0^t \int |u|^2\Delta \phi_B\lesssim N |B|^{-2/3} \sup_{B'\in \mathcal B_n; \,B' \cap B^*\not= \emptyset}  \int_0^t \int_{B'} |u|^2 \lesssim |B|^{(\la-2)/3} \int_0^t a_n.
}
Combining this with Lemmas \ref{lemma.pressure} and \ref{lemma.cubic} we obtain
\EQS{\label{est3.2}
& |B|^{-\la/3} \int |u(t)|^2 \phi_B + 2 |B|^{-\la/3}\int_0^t \int |\nb u|^2 \phi_B
 \\
&\leq |B|^{-\la/3}  \int |u_0|^2\phi_B +C|B|^{-2/3} \int_0^t a_n\,ds
+  C_\e |B|^{2\la/3-4/3 }\int_0^t a_n^3 \,ds
\\&\quad +C |B|^{\la/6- 5/6 }\int_0^t a_n^{3/2}\,ds + C \e b_n(t).
}

By Young's inequality for products,
\EQ{
|B|^{\frac{\la-5}6 } a_n^{3/2}  =  (|B|^{-\frac12 } a_n^{3/4})\cdot (|B|^{\frac{\la-2}6 } a_n^{3/4}) \le
(|B|^{-\frac12 } a_n^{3/4})^{4/3}+ (|B|^{\frac{\la-2}6 } a_n^{3/4})^4.
}
Hence the second last term in  \eqref{est3.2} can be dropped.

Thanks to the assumption $\la \le 2$, all exponents of $|B|$ in \eqref{est3.2} are nonpositive.
Observe that the smallest ball in $\mathcal B_n$ has radius $\sim 2^{n\al}$. Hence,
\[
|B|^{-1}\lesssim 2^{-3n\al}.
\]
 Taking sup of
\eqref{est3.2} over $B \in \cB_n$, we obtain
\[
a_n(t)+2b_n(t)
\leq Ca_n(0) +C2^{-2n\al} \int_0^t a_n\,ds
+  C_\e 2^{2n\al(\la-2)}\int_0^t a_n^3 \,ds + C \e b_n(t).
\]
Choosing $\e=1/C$, we can absorb the last term by the left side and get \eqref{grad.est.1}.
\end{proof}

\begin{remark}\label{la<2}
The condition $\la \le 2$ is essential in taking the sup of \eqref{est3.2}. If it fails, we cannot not have the a priori bound \eqref{grad.est.1}.
\end{remark}
\smallskip
Next we will use the foollowing version of Gr\"onwall lemma.
\begin{lem}\label{Gronwall}
  Suppose $f(t)\in L^{\infty}([0,T];[0,\infty))$ satisfies, for some $m\ge 1$,
  \EQ{
  f(t)\le a+\int_{0}^{t}(c_1f(s)+c_2f(s)^m)~ds~~\text{for}~ 0<t<T,
  }
  where $a,c_1,c_2>0$ then for $T_0=\min(T,T_1)$, with
  $$
  T_1=\frac{a}{2ac_1+(2a)^mc_2}
  $$
  we have $f(t)\le 2a$ for $t\in(0,T_0).$
\end{lem}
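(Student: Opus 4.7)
The plan is a continuity (bootstrap) argument carried out on a continuous majorant of $f$. Set
\[
g(t) \;=\; a + \int_0^t \bigl( c_1 f(s) + c_2 f(s)^m \bigr) ds,
\]
which, since $f \in L^\infty([0,T])$, is absolutely continuous on $[0,T]$ with $g(0)=a$; the hypothesis then gives $f(t)\le g(t)$ for every $t\in(0,T)$. I would define
\[
T^* \;=\; \sup\{\, t \in [0,T_0] : g(s) \le 2a \text{ for all } s \in [0,t]\,\}.
\]
Continuity of $g$ together with $g(0)=a<2a$ yields $T^*>0$, and since $\{s: g(s)\le 2a\}$ is closed, one has $g(T^*)\le 2a$, with equality whenever $T^*<T_0$ (otherwise $g$ would stay strictly below $2a$ slightly past $T^*$, contradicting maximality).

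The bootstrap step is then immediate: on $[0,T^*]$ we have $f\le g\le 2a$, and because $x\mapsto c_1 x + c_2 x^m$ is increasing on $[0,\infty)$, monotonicity of the integrand yields
\[
g(t)\;\le\; a + \int_0^t\bigl(2ac_1 + (2a)^m c_2\bigr)ds \;=\; a + t\bigl(2ac_1 + (2a)^m c_2\bigr), \qquad t\in[0,T^*].
\]
I would then argue by contradiction: if $T^*<T_0$, the equality $g(T^*)=2a$ combined with the displayed bound at $t=T^*$ forces $a\le T^*\bigl(2ac_1+(2a)^m c_2\bigr)$, i.e.\ $T^*\ge T_1$. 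Since $T_0\le T_1$ by definition of $T_0$, this gives $T^*\ge T_0$, contradicting $T^*<T_0$. Hence $T^*=T_0$, and from $f\le g\le 2a$ on $[0,T^*)$ we conclude $f(t)\le 2a$ for all $t\in(0,T_0)$.

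No serious obstacle is expected. The only delicate point is that $f$ is only in $L^\infty$ and need not be pointwise defined or continuous, so the continuity/bootstrap argument must be executed on the continuous majorant $g$ rather than on $f$ itself; once this replacement is made, the proof reduces to a standard open-closed / maximality argument and closes cleanly.
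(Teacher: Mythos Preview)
Your proof is correct and matches the paper's approach: the paper does not give a detailed argument but defers to \cite[Lemma 2.2]{BT8}, noting only that ``$f(t)$ may be discontinuous,'' which is precisely the point you handle by passing to the continuous majorant $g$ before running the bootstrap.
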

The proof is essentially that of \cite[Lemma 2.2]{BT8} although we do not take $c_1=c_2$. Note that $f(t)$ may be discontinuous.

Our goal is to get uniform a priori estimates for local energy solutions in classes $F^2_{\al,\la}$:
\begin{thm}[\textit{A priori} estimates]\label{uniform.bound}
Assume the condition \eqref{alla-cond} for $\al$ and $\la$.
  Assume $u_0\in F^2_{\al,\la}(\R^n)$ is divergence free and let $(u,p)$ be a local energy solution with initial data $u_0$ on $\R^3\times(0,T)$, with
  \EQ{\label{eq4.6}
  \esssup_{0<s<t}a_n(s)+b_n(t)< \infty
  }
  for all $t<T$, for one fixed $n\in \NN_0$. Then there exist $C_0,C_1$ independent of $n$ such that
 \EQ{\label{eq3.6}
  \esssup_{0<s<T_n^*}a_n(s)+b_n(T_n^*)\le C_0\|u_0\|_{F^2_{\al,\la}(R=2^n)}^2,
  }
where $T_n^* = \min(T, T_n)$ and
\EQ{\label{Tn.def}
T_n=C_1\bke{ 2^{-2n\al} + 2^{2n\al(\la-2)} \|u_0\|_{F^2_{\al,\la}(R=2^n)}^4}^{-1}.
}
\end{thm}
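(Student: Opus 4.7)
The plan is a direct application of the Gronwall-type Lemma \ref{Gronwall} to the a priori differential inequality in Lemma \ref{a.priori}. Define the monotone nondecreasing quantity $f(t) := \esssup_{0<s\le t}a_n(s) + b_n(t)$, which is finite on $[0,T)$ by the hypothesis \eqref{eq4.6}. Since $a_n(\tau) \le f(\tau)$ for a.e.\ $\tau$, Lemma \ref{a.priori} yields, for a.e.\ $s \in (0,T)$,
\begin{equation*}
a_n(s) + b_n(s) \le C\,a_n(0) + C\, 2^{-2n\al}\!\int_0^s f(\tau)\,d\tau + C\, 2^{2n\al(\la-2)}\!\int_0^s f(\tau)^3\,d\tau.
\end{equation*}
Since the right-hand side is monotone nondecreasing in $s$, taking the essential sup of $a_n$ over $(0,t)$ and using the inequality at $s=t$ for $b_n(t)$ (and combining with at worst a factor of $2$) produces
\begin{equation*}
f(t) \le \td C\,a_n(0) + \td C\, 2^{-2n\al}\!\int_0^t f(s)\,ds + \td C\, 2^{2n\al(\la-2)}\!\int_0^t f(s)^3\,ds.
\end{equation*}

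Next I would invoke Lemma \ref{Gronwall} with $a = \td C\,a_n(0)$, $c_1 = \td C\, 2^{-2n\al}$, $c_2 = \td C\, 2^{2n\al(\la-2)}$, and $m = 3$. The associated time threshold
\begin{equation*}
T_1 = \frac{a}{2ac_1 + (2a)^3 c_2}
\end{equation*}
simplifies, after cancelling a factor of $a = \td C\,a_n(0)$ from numerator and denominator, to a quantity comparable to
\begin{equation*}
\bke{2^{-2n\al} + a_n(0)^2 \cdot 2^{2n\al(\la-2)}}^{-1} = \bke{2^{-2n\al} + 2^{2n\al(\la-2)}\,\|u_0\|_{F^2_{\al,\la}(R=2^n)}^4}^{-1},
\end{equation*}
using $a_n(0) = \|u_0\|_{F^2_{\al,\la}(R=2^n)}^2$. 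Choosing $C_1$ sufficiently small, this is bounded below by $T_n$ in \eqref{Tn.def}. Lemma \ref{Gronwall} then delivers $f(t) \le 2a = 2\td C\,a_n(0)$ for $t \in (0, T_n^*)$ with $T_n^* = \min(T, T_n)$, which is exactly \eqref{eq3.6} with $C_0 = 2\td C$.

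There is no serious obstacle once Lemmas \ref{a.priori} and \ref{Gronwall} are in place; the minor bookkeeping points are (i) the monotonicity used to upgrade the pointwise a.e.\ inequality to one for $f(t)$ (handled by the nondecreasing right-hand side and of $b_n$), and (ii) the constraint $\la \le 2$, which was essential already in Lemma \ref{a.priori} and for which no further work is required here. It is worth noting that the two terms in the definition of $T_n$ correspond respectively to the linear and cubic contributions in the Gronwall inequality: the linear term $2^{-2n\al}$ dominates when $u_0$ is small while the cubic term $2^{2n\al(\la-2)}\,\|u_0\|^{4}$ dominates when it is large. This dichotomy is precisely what ultimately forces $T_n \to \infty$ as $n\to \infty$ under the vanishing condition \eqref{vanishing}, driving the global existence conclusion in Theorem \ref{mainthm}.
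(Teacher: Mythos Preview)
Your proposal is correct and follows essentially the same approach as the paper: both feed the inequality of Lemma \ref{a.priori} into the Gr\"onwall-type Lemma \ref{Gronwall} with $c_1 \sim 2^{-2n\al}$ and $c_2 \sim 2^{2n\al(\la-2)}$, and then read off $T_n$ from the resulting time threshold. The only cosmetic difference is that the paper applies Lemma \ref{Gronwall} directly to $a_n(t)$ (since the right-hand side of \eqref{grad.est.1} involves only $a_n$, not $b_n$) and recovers the bound on $b_n$ afterward by substituting back into \eqref{grad.est.1}, whereas you package $f(t)=\esssup a_n + b_n$ first; this changes nothing of substance.
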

\begin{proof}
  We will apply Lemma \ref{Gronwall} to the apriori estimate we got in Lemma \ref{a.priori}:
  \EQN{
  a_n(t)\le C a_n(0)+\int_{0}^{t}(c_1(n)a_n(s)+c_2(n)a_n(s)^3)\,ds,
 }
where $c_1= C 2^{-2n\al}$ and $c_2=C 2^{2n\al(\la-2)}$.
  Therefore due to Lemma \ref{Gronwall} we get that
  \EQN{
   a_n(t)\le 2Ca_n(0),\qquad \forall 0<t<\min(T,T_n)
  }
 where $T_n=[2c_1+2(2Ca_n(0))^2c_2]^{-1}$,
 with $C$ independent of $n,u_0$.
The bound for $b_n(t)$ then follows from \eqref{grad.est.1} of Lemma \ref{a.priori}.
\end{proof}

\section{Local and global existence of weak solutions}
\label{sec5}
We now prove our main theorem,  Theorem \ref{mainthm}, on the local and global existence of weak solutions with initial data from $\mathring F^2_{\al,\la}$.

\begin{proof}[Proof of Theorem \ref{mainthm}]
If $\al  =0$, then we are back to local energy solutions in $E^2$ constructed by \cite{LR}, with details given by \cite{KiSe}.
In the following we assume $\al  >0$.

We first show local existence. Take $j\in \Bbb N$. Since $u_0\in \mathring F^2_{\al,\la,\si}$ there exists $u_0^j\in L^2_\si(\R^3)$ such that $\|u_0-u_0^j\|_{F^{2}_{\al,\la}(R=1)}\le\frac{1}{j}$. Let $(u^j,\bar{p}^j)$ be a global Leray solution for initial data $u_0^j$ in the energy class. By Theorem \ref{uniform.bound}, for $n \in \NN_0$, $u^j$ is bounded uniformly in $L^{\infty}(0,T_n;F^2_{\al,\la}(2^{n}))$ and satisfies
\EQ{\label{sequence.bound-loc}
\esssup_{0<s<T_n}\sup_{B\in \cB_n} \frac 1 {|B|^{\frac\la3}}\int_B |u^j(s)|^2 +\sup_{B\in \mathcal{B}_n}\frac{1}{|B|^{\frac\la3}}\int_0^{T_n}\!\!\!\int_{B}|\nabla u^j|^2\le C_0\|u_0\|_{F^2_{\al,\la}(2^{n})}^2
+ \e_{n,j},
  }
where $T_n$ given by \eqref{Tn.def}, and $\e_{n,j}=C_0\|u_0-u_0^j\|_{F^2_{\al,\la}(2^{n})}^2 \to0$ as $j\to\infty$ by Lemma \ref{vanishing.initial.data}.

Now let $T_n^* = \max_{j=1}^n T_j = T_{j(n)}$ for some $j(n) \le n$. Let $T_+=\sup_{n \in \NN} T_n$. We have $\lim_{n \to \infty} T_n^* =T_+$.
By the uniform bound \eqref{sequence.bound-loc}, the sequence $u^j$ is also bounded locally in energy space on $B \times (0,T_n^*)$ for every ball $B\subset \R^3$, since $B$ can be covered with finite number of balls $B_{r_x}(x)$ corresponding to the space $F^2_{\al,\la}(2^{j(n)})$. Denote $u^{0,k} = u^k$. By the uniform bound and induction, for any $n\in\Bbb N$
we can construct a subsequence $\{u^{n,k}\}_{k\in\Bbb N}$ from $\{u^{n-1,k}\}_{k\in\Bbb N}$ which converges to a vector field $u_n$ on $B_{2^{n}}\times(0,T)$ as $k\rightarrow\infty$ in the following sense:
\EQ{\gathered
u^{n,k}\overset{*}{\rightharpoonup} u_n~\textrm{in} ~L^{\infty}(0,T_n^*;L^2(B_{2^{n}})),
\\
u^{n,k}\rightharpoonup u_n~\textrm{in} ~L^2(0,T_n^*;H^1(B_{2^{n}})),
\\
u^{n,k}\rightarrow u_n~\textrm{in} ~L^{3}(0,T_n^*;L^3(B_{2^{n}})).
\endgathered}
Denote $\tilde{u}_n$ as a $0$ extension of $u_n$  to $\R^3\times (0,\infty)$. From our construction $\tilde{u}_n=\tilde{u}_{n-1}$ on $B_{2^{n-1}}\times(0,T)$. Let $u=\lim_{n\rightarrow\infty} \tilde{u}_n$. Then, $u=u_n$ on $B_{2^{n}}\times(0,T_n^*)$ for every $n\in\Bbb N$.
Let $u^{(k)}=u^{k,k}$ on $B_{2^{k}}\times(0,T_n^*)$ and $0$ elsewhere. Then for any $n\in\Bbb N$ we have the following convergence as $k\rightarrow\infty$:
\EQ{\label{ukconverge.loc}
\gathered
u^{(k)}\overset{*}{\rightharpoonup} u~\textrm{in} ~L^{\infty}(0,T_n^*;L^2(B_{2^{n}})),
\\
u^{(k)}\rightharpoonup u~\textrm{in} ~L^2(0,T_n^*;H^1(B_{2^{n}})),
\\
u^{(k)}\rightarrow u~\textrm{in} ~L^{3}(0,T_n^*;L^3(B_{2^{n}})).
\endgathered}
Notice that our sequence $\{u^{(k)}\}$ still satisfies \eqref{sequence.bound-loc} for all $n$ up to time $T_n$, therefore due to convergence \eqref{ukconverge.loc} the limit function $u$ also enjoys the same bound for every $n \in \NN_0$
 \EQ{\label{apriori}
\esssup_{0<s<T_n}\sup_{B\in \cB_n} \frac 1 {|B|^{\la/3}}\int_B |u(s)|^2 +\sup_{B\in \mathcal{B}_n}\frac{1}{|B|^{\la/3}}\int_0^{T_n}\!\!\!\int_{B}|\nabla u|^2\le C_0\|u_0\|_{F^2_{\al,\la}(2^{n})}^2.
}
\medskip

The pressure is dealt similarly to \cite[Section 3]{KwTs} and \cite[Section 4]{BKT}. From pressure estimate Lemma \ref{lemma.pressure}, as well as \eqref{ukconverge.loc} we know that associated pressure sequence $p^{(k)}$ converge in the following sense for any $n$
\EQ{\label{pkconverge.loc}
p^{(k)}\rightharpoonup p~\textrm{in} ~L^{3/2}((0,T_n^*)\times B_{2^{n}}),
}
and the limit satisfies the Navier-Stokes equation \eqref{eq.NSE}. Next we will establish pressure decomposition and its convergence. We follow the same argument as in \cite{BKT}: For any $B\subset\R^3$, $T<T_+$, and any $k\in\NN$ sufficiently large, there exists $p^{(k)}_B(t)\in L^{3/2}(0,T)$ such that $\forall t<T$ we have the following:
\EQ{
p^{(k)}(x,t)-p^{(k)}_B(t)=G_{ij}^B(u_i^{(k)}u_j^{(k)}),
}
where
\EQ{\gathered
G_{ij}^B(u_i^{(k)}u_j^{(k)})=-\frac13|u^{(k)}(x)|^2+p.v.\int_{y\in B^{**}}K_{ij}(x-y)(u^{(k)}_i(y,s)u^{(k)}_j(y,s))~dy
\\
+\int_{y\notin B^{**}}(K_{ij}(x-y)-K_{ij}(x_B-y))(u^{(k)}_i(y,s)u^{(k)}_j(y,s))~dy.
\endgathered}
Here we apply the convergence of $u^{(k)}$ in energy space, properties of singular integrals, and the a priori bounds \eqref{sequence.bound-loc} and \eqref{apriori}
 to prove that
\EQ{\label{G.convergence}
G_{ij}^B(u_i^{(k)}u_j^{(k)})\rightarrow G^B_{ij}(u_iu_j) ~\textrm{in}~ L^{\frac{3}{2}}(0,T;L^{\frac{3}{2}}(B)).
}
Indeed, choose $n$ so  {that $T\le T_n$, choose $N \ge n$
large enough so that $B^{**}\subset B(0,2^n)$,} and split $G_{ij}^B(u_i^{(k)}u_j^{(k)})=M_1^{(k)}+M_2^{(k)}+M_3^{(k)}$, where
\EQN{
M_1^{(k)}&=-\frac13|u^{(k)}(x)|^2+p.v.\int_{y\in B^{**}}K_{ij}(x-y)(u^{(k)}_iu^{(k)}_j(y,s))
\\
M_2^{(k)}&=\int_{y\in B(0,2^N)\setminus B^{**}}(K_{ij}(x-y)-K_{ij}(x_B-y))(u^{(k)}_iu^{(k)}_j(y,s))~dy
\\
M_3^{(k)}&=\sum_{m=N}^{\infty} \int_{y\in B(0,2^{m+1})\setminus B(0,2^{m})}(K_{ij}(x-y)-K_{ij}(x_B-y))(u^{(k)}_iu^{(k)}_j(y,s))~dy,
} 
where $N\in \NN$ and $N\ge n$.
We will use similar notations $M_1,M_2,M_3$ but for function $u$ instead of $u^{(k)}$. First we will prove smallness of $|M_3^{(k)}-M_3|$. After applying annulus covering Lemma \ref{annulas-integral} using balls from $\cB_n$, we get that
\EQN{\label{5.10}
&\sum_{m=N}^{\infty} \int_{y\in B(0,2^{m+1})\setminus B(0,2^{m})}(K_{ij}(x-y)-K_{ij}(x_B-y))\big |u^{(k)}_iu^{(k)}_j(y,s)-u_iu_j(y,s)\big|~dy
\\
&\lec \sum_{m=N}^{\infty}\int_{y\in B(0,2^{m+1})\setminus B(0,2^{m})}\frac{|B|^{1/3}}{2^{4m}}\big|u^{(k)}_iu^{(k)}_j(y,s)-u_iu_j(y,s)\big|~dy
\\
&\lec \sum_{m=N}^{\infty}\frac{|B|^{1/3}}{2^{4m}}2^{m(\al\la+3-3\al)}(\|u\|^2_{F^2_{\al,\la}(2^n)}+\|u^{(k)}\|^2_{F^2_{\al,\la}(2^n)})\lec |B|^{1/3}2^{N(\al\la-1-3\al)}\|u_0\|^2_{F^2_{\al,\la}(2^n)}.
} 
In the last inequality we have used $s\le T\le T_n$, the a priori bounds \eqref{sequence.bound-loc} and \eqref{apriori}, and taking $k$ sufficiently large (independent of $\e$) so that $\e_{n,k}$ in \eqref{sequence.bound-loc} is bounded using Lemma \ref{vanishing.initial.data},
\EQN{
\e_{n,k}&=C_0\|u_0-u_0^k\|^2_{F^2_{\al,\la}(2^n)} \le C_0 \|u_0-u_0^k\|^2_{F^2_{\al,\la}(1)} 2^{n(1-\al)\al(3-\la)} 
\\&\le C_0 k^{-2}  2^{n(1-\al)\al(3-\la)}  \le \|u_0\|^2_{F^2_{\al,\la}(2^n)}.
}
Therefore, for any $\varepsilon>0$ we can find large enough $n$ so that
\EQS{
\|M_3^{(k)}&-M_3\|_{L^{3/2}((0,T)\times {B})}\le C(B,T)2^{N(\al\la-1-3\al)}\|u_0\|_{F^2_{\al,\la}(2^n)}^2
\\
& \le C(B,T)2^{N(\al\la-1-3\al)}\|u_0\|_{F^2_{\al,\la}(1)}^2 C2^{n(1-\al)\al(3-\la)}  \le\varepsilon
}
using Lemma \ref{vanishing.initial.data} again, $N \ge n$, and $(\al\la-1-3\al)+(1-\al)\al(3-\la) <0$.
After choosing $N$ we apply \eqref{ukconverge.loc} for $M_1^{(k)},M_2^{(k)}$ to find $k$ large enough so that
\EQ{
\|M_1^{(k)}-M_1\|_{L^{3/2}((0,T)\times {B})}+\|M_2^{(k)}-M_2\|_{L^{3/2}((0,T)\times{B})}\le\varepsilon.
}
This proves convergence \eqref{G.convergence}. Lastly, in any fixed domain $B\times[0,T]$ pair $(u,G^B_{ij}(u_iu_j))$ solves \eqref{eq.NSE}. Therefore, $\nabla p=\nabla G_{ij}^B(u_iu_j)$ in $\mathcal{D}'(\R^3)$ at every time $t\le T$ and so there exists a function of time $p_B(t)$ such that
$$
p(x,t)= G_{ij}^B(u_iu_j)(x)+p_B(t), ~\text{for} ~(x,t)\in B\times [0,T].
$$
Also from this identity we get $p_B(t)\in L^{\frac{3}{2}}(0,T)$. This finishes proof of pressure decomposition and convergence.

\medskip

Next we need to establish local energy estimate using the above convergence  \eqref{ukconverge.loc} for $u^{(k)}\rightarrow u$ and  \eqref{pkconverge.loc} for $p^{(k)}\rightarrow p$ in $L^{3/2}_\loc$. Since $\{u^{(k)}\}$ is a subsequence of $\{u^j\}$ which are global Leray solutions we have local energy inequalities for $(u^{(k)},p^{(k)})$. For any $\phi \in C^{\infty}_c(\R^3\times (0,T_+)),~\phi\ge0,$
\EQS{
\int& |u^{(k)}(x,t)|^2\phi(x,t)~dx+ 2\int_0^t\int|\nabla u^{(k)}(x,s)|^2\phi(x,s) ~dx~ds
\\
&\le \int_0^t\int |u^{(k)}(x,s)|^2(\partial_t\phi(x,s)~dx~ds+\Delta \phi(x,s))~dx~ds
\\
&+\int_0^t\int(|u^{(k)}(x,s)|^2+2p^{(k)}(x,s))(u^{(k)}(x,s)\cdot \nabla\phi(x,s))~dx~ds.
}
We can find sufficiently large $n$ so that $\supp\{\phi\}\subset B_{2^n}\times[0,T_n^*]$ and applying the convergence of $u^{(k)}$ in energy space and $p^{(k)}$ in $L^{3/2}$ on $B_{{2^n}}$ we get the local energy inequality for $(u,p)$ which finishes the proof of local existence.
\medskip

For global existence, we want to ensure that $T_+=\limsup_{n \to \infty} T_n = \infty$.
By the definition \eqref{Tn.def} of $T_n$, it is equivalent to
\[
\liminf_{n \to \infty}  2^{n\al(\la-2)} \|u_0\|_{F^2_{\al,\la}(R=2^n)}^2 =0.
\]
It suffices to have \eqref{vanishing},
\[
\lim_{\rho \to \infty}  \rho^{\al(\la-2)} \|u_0\|_{F^2_{\al,\la}(\rho)}^2 =0.
\]
This condition is assumed when $\al(3-\la)<1$ by Theorem \ref{mainthm}. When $\al(3-\la)>1$, by Lemma \ref{vanishing.initial.data}
with $C_2 \lec \rho ^{\al(1-\al)(3-\la)}$, for $u_0\in F^2_{\al,\la}$ with $\|u_0\|^2_{F^{2}_{\al,\lambda}(R=1)}=A$,
\[
\rho^{\al(\la-2)} \|u_0\|^2_{F^{2}_{\al,\lambda}(\rho)} \lec \rho^{\al(\la-2)}  \rho ^{\al(1-\al)(3-\la)}
A= (\rho^\al)^{1-3\al+\al\la}A,
\]
which vanishes as $\rho \to \infty$. When $\al(3-\la)=1$, This condition is more delicate, and is proved by Lemma \ref{lem2.9} for $u_0 \in \mathring F^2_{\al,\la}$. This finishes the proof of global existence.
\end{proof}

\section{Eventual regularity}\label{sec6}

In this section we consider eventual regularity of our weak solutions, and give
 a proof of Theorem \ref{thm.ER}.
The proof requires a variant of the Caffarelli-Kohn-Nirenberg regularity  criteria \cite{CKN} due to Lin \cite{L98}; see also \cite{LS99}.

\begin{lem}[$\e$-regularity criteria]\label{thrm.epsilonreg}
For any $\si\in (0,1)$,
there exists a universal constant $\e_*=\e_*(\si)>0$ such that, if a pair $(u,p)$ is a suitable weak solutions of \eqref{eq.NSE} in
$Z_r=Z_r(x_0,t_0)=B_r(x_0)\times (t_0-r^2,t_0)$,
and
\[
\e^3=\frac 1 {r^2} \int_{Z_r} (|u|^3 +|p|^{3/2})\,dx\,dt <\e_*,
\]
then $u\in L^\I(Z_{\si r})$.
Moreover,
\[
\|  \nabla^k u\|_{L^\I(Z_{\si r})} \leq C_k {\e}\, r^{-k-1}
   , k\in {\mathbb N}_0
\]
for universal constants $C_k=C_k(\si)$.
\end{lem}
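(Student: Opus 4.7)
The plan is to follow the now-standard Lin-style strategy, split into three stages: rescaling to unit scale, Lin's iteration for $L^\infty$ bound, and bootstrap for higher derivatives.

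First, reduce to $r=1$ by parabolic rescaling. For any suitable weak solution $(u,p)$ on $Z_r(x_0,t_0)$, the pair
\[
u_r(y,s) = r\,u(x_0+ry,t_0+r^2 s), \qquad p_r(y,s)= r^2\, p(x_0+ry,t_0+r^2 s)
\]
is a suitable weak solution of \eqref{eq.NSE} on $Z_1=B_1\times(-1,0)$, and a direct change of variables gives
\[
\int_{Z_1}\bke{|u_r|^3+|p_r|^{3/2}}\,dy\,ds = \frac{1}{r^2}\int_{Z_r}\bke{|u|^3+|p|^{3/2}}\,dx\,dt = \e^3.
\]
Every conclusion of the lemma is invariant under this rescaling modulo the claimed factor $r^{-k-1}$, so it suffices to prove: there is $\e_*=\e_*(\si)$ such that $\e^3<\e_*$ implies $\|\nabla^k u_r\|_{L^\infty(Z_\si)} \le C_k \e$ for all $k\in \NN_0$.

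Second, apply Lin's $\e$-regularity criterion \cite{L98} (see also \cite{LS99}) on the intermediate cylinder $Z_{\si'}$ with $\si'=(1+\si)/2$. The heart of the argument is the scale-invariant iteration: introduce the quantities
\[
A(\rho)=\esssup_{t_0-\rho^2<t<t_0}\frac 1\rho \int_{B_\rho} |u|^2, \quad E(\rho)=\frac 1\rho \int_{Z_\rho}|\nb u|^2, \quad C(\rho)= \frac 1 {\rho^2}\int_{Z_\rho}|u|^3,
\]
\[
D(\rho)=\frac 1 {\rho^2}\int_{Z_\rho}|p|^{3/2}.
\]
Using the local energy inequality, Sobolev embedding, H\"older, and Calder\'on–Zygmund applied to the pressure decomposition, one shows that for any $\th\in(0,1/2)$,
\[
C(\th\rho)+D(\th\rho) \le C_1 \th^{3}\bke{A(\rho)+E(\rho)}^{3/2} + C_2 \th^{-3}\bke{C(\rho)+D(\rho)}
\]
and a compatible bound controlling $A+E$ by $C+D$. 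Choosing $\th$ small and iterating the resulting estimate starting from $\rho=1$, the smallness of $C(1)+D(1)\le \e^3 <\e_*$ propagates geometrically, giving Morrey-type decay and hence H\"older continuity of $u_r$ on $Z_{\si'}$ with the quantitative bound $\|u_r\|_{L^\infty(Z_{\si'})}\le C\e$. This iteration is the main obstacle: both steps require sharp use of the local pressure estimate and are exactly what fails in the endpoint $\e\gec\e_*$ case.

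Third, bootstrap from the $L^\infty$ bound to derivative bounds on $Z_\si$. Once $u_r$ is bounded on $Z_{\si'}$, the nonlinear term $u_r\cdot\nb u_r$ is integrable to any order locally, and standard interior parabolic regularity for the (local) Stokes system --- obtained by cutting off, representing via the heat kernel and the Oseen tensor, and applying singular integral bounds --- gives $\|\nb^k u_r\|_{L^\infty(Z_{\si''})}\le C_k \e$ for any $\si<\si''<\si'$ and $k\in \NN_0$, as in the derivative estimates of \cite{CKN}. Unwinding the rescaling of Stage 1 turns $\|\nb^k u_r\|_{L^\infty(Z_\si)}\lec \e$ into $\|\nb^k u\|_{L^\infty(Z_{\si r})}\lec \e\, r^{-k-1}$, which is the stated bound.
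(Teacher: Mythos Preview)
The paper does not prove this lemma at all: it is quoted as a known result and attributed to Lin \cite{L98} (with the variant in Lady\v{z}enskaja--Seregin \cite{LS99}), without any argument reproduced. Your proposal is therefore not being compared against an original proof in the paper, but against the cited literature, and your three-stage outline (parabolic rescaling, Lin-type iteration of the dimensionless quantities $A,E,C,D$, then parabolic bootstrap for higher derivatives) is exactly the structure of Lin's argument. In that sense your approach and the paper's ``proof'' (a citation) agree.

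One small caution on Stage~2: the precise form of the iteration inequality you wrote is schematic rather than exact. In Lin's actual proof the decay step is not a single inequality of the shape $C(\th\rho)+D(\th\rho)\le C_1\th^3(A+E)^{3/2}+C_2\th^{-3}(C+D)$; rather one proves separate estimates (e.g.\ $C(\th\rho)\lesssim \th^3 A(\rho)^{3/2}+\th^{-3}A(\rho)^{3/4}E(\rho)^{3/4}$ and a pressure iteration $D(\th\rho)\lesssim \th D(\rho)+\th^{-2}C(\rho)$), combines them with the local energy bound $A(\rho/2)+E(\rho/2)\lesssim C(\rho)^{2/3}+C(\rho)+D(\rho)$, and only then closes the iteration on a suitable combination. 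Your version would not close as written because the $\th^{-3}(C+D)$ term has no smallness to absorb. This is a detail of execution, not of strategy; the overall plan is correct and matches what the paper cites.
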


There are many variants of Lemma \ref{thrm.epsilonreg}, which involve different local space-time norms. The following lemma demonstrates how to interpolate such space-time norms.

\begin{lem} \label{lem.interp}Fix $x$ and $t$ and suppose $B(x,\sqrt t)\subset B(x,r_x) \in \mathcal B_n$. Let $\th=3(1/2-1/p)\in [0,1]$  {and $\th q \le 2$}. 
Then,
\EQN{
\int_0^t \bigg( \int_{B(x,\sqrt t)} |u|^p\,dx\bigg)^{\frac q p} \,ds&\lesssim 
 {r_x^{ \la q/2} \,t^{1-\th q/2}  \bke{a_n(t)^{(1-\th)q/2} \, b_n(t)^{\th q/2} + a_n(t)^{q/2}}.}
}
\end{lem}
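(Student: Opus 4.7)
The plan is to apply a bounded-domain Gagliardo--Nirenberg inequality on $B(x,\sqrt t)$ at each time $s$, then use H\"older in time together with the definitions of $a_n$ and $b_n$ to convert the pointwise-in-time interpolation into the desired space-time bound.

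First, I would apply the Gagliardo--Nirenberg inequality on the ball $B:=B(x,\sqrt t)$ of radius $\sqrt t$. Since $\th = 3(1/2-1/p)\in[0,1]$ corresponds to the interpolation exponent matching $L^2$ and $\dot H^1$ into $L^p$ on $\R^3$, the bounded-domain version (with the scaling-correct lower-order term coming from a Sobolev extension at scale $\sqrt t$) gives
\[
\|u(s)\|_{L^p(B)} \lec \|\nabla u(s)\|_{L^2(B)}^\th \|u(s)\|_{L^2(B)}^{1-\th} + t^{-\th/2}\|u(s)\|_{L^2(B)}.
\]
Raising to the $q$-th power and using $(A+B)^q\lec A^q+B^q$ yields
\[
\|u(s)\|_{L^p(B)}^q \lec \|\nabla u(s)\|_{L^2(B)}^{\th q}\|u(s)\|_{L^2(B)}^{(1-\th)q}+t^{-\th q/2}\|u(s)\|_{L^2(B)}^q.
\]

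Second, the inclusion $B(x,\sqrt t)\subset B(x,r_x)\in \cB_n$ and the definition \eqref{aln.def} of $a_n$ give the pointwise-in-time control
\[
\|u(s)\|_{L^2(B(x,\sqrt t))}^2 \le \|u(s)\|_{L^2(B(x,r_x))}^2 \lec r_x^\la a_n(s)\le r_x^\la a_n(t),
\]
where I interpret $a_n(t)$ on the right as the essential supremum $\esssup_{0<s<t}a_n(s)$, which is permissible under hypothesis \eqref{eq4.6}.

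Third, I would integrate in $s\in(0,t)$. The lower-order part contributes
\[
t^{-\th q/2}\int_0^t \|u(s)\|_{L^2(B)}^q\,ds \lec t^{1-\th q/2}\, r_x^{\la q/2}\, a_n(t)^{q/2}.
\]
For the main term, pull $\|u(s)\|_{L^2(B)}^{(1-\th)q}$ out using the same bound, then apply H\"older in time with exponents $2/(\th q)$ and $2/(2-\th q)$---which is precisely where the hypothesis $\th q\le 2$ enters---and invoke \eqref{ben.def}:
\[
\int_0^t \|\nabla u(s)\|_{L^2(B)}^{\th q}\,ds \le \bke{\int_0^t \|\nabla u\|_{L^2(B)}^2\,ds}^{\th q/2} t^{1-\th q/2} \lec \bke{r_x^\la b_n(t)}^{\th q/2} t^{1-\th q/2}.
\]
Summing the two contributions produces exactly the stated right-hand side $r_x^{\la q/2}\, t^{1-\th q/2}\bke{a_n(t)^{(1-\th)q/2}b_n(t)^{\th q/2}+a_n(t)^{q/2}}$.

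The only step requiring any care is the Gagliardo--Nirenberg step on the ball with the scale-correct $t^{-\th/2}$ lower-order term; the rest of the argument is bookkeeping. No part of the proof should use any structure beyond the definitions of $a_n,b_n$, so the condition $\th q\le 2$ is the only essential quantitative restriction.
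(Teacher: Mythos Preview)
Your proof is correct and follows essentially the same route as the paper: Gagliardo--Nirenberg on the ball $B(x,\sqrt t)$, then H\"older in time (which is where $\th q\le 2$ is used), then substitution of the definitions of $a_n$ and $b_n$ via the inclusion $B(x,\sqrt t)\subset B(x,r_x)$. The only cosmetic difference is that the paper writes the lower-order Gagliardo--Nirenberg term as $|B|^{q(1/p-1/2)}\|u\|_{L^2(B)}^q$, which is equivalent to your $t^{-\th q/2}\|u\|_{L^2(B)}^q$ since $|B|\sim t^{3/2}$ and $q(1/p-1/2)=-\th q/3$.
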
 
\begin{proof}
Let $B=B(x,\sqrt t)$.
By the Gagliardo-Nirenberg and H\"older inequalities we have 
\EQN{
\int_0^t \bigg( \int_{B(x,\sqrt t)} |u|^p\,dx\bigg)^{\frac q p} \,ds &\lesssim t^{1- \th q/2} \sup_{0<\tau <t}\| u(\tau)\|_{L^2(B)}^{(1-\th)q/2}  \bigg(  \int_0^t \| \nb u\|_{L^2(B)}^2 \bigg)^{\th q /2}
\\&+|B|^{q(1/p-1/2)}\int_0^t  \| u \|_{L^2(B)}^q \,ds.
}
Inserting the correct powers of $r_x$  and noting $B(x,\sqrt t)\subset B(x,r_x) \in \mathcal B_n$ completes the proof. 
\end{proof}

Our strategy will be to divide the estimate in Lemma \ref{lem.interp} by $t^{1+q(3/p-1)/2}$, which makes the left-hand side dimensionless. On the right-hand side, this leads to the following pre-factor on the leading order term when $|x|\geq 2^n$,
\[
{|x|^{\al\la q/2} }\,t^{-q(3/p-1)/2-\th q/2}.
\]
To close our argument, this will need to be bounded by $1$. Working out the cancellations results in the requirement that
\[
|x|^{2\al \la} \leq t.
\]
This condition is independent of $p$ and $q$. We will therefore choose $p=q=3$ in what follows.

The remainder of this section is entirely devoted to the proof of Theorem \ref{thm.ER}.

\begin{proof}[Proof of Theorem \ref{thm.ER}] Fix $x$ and $t$ and suppose $B(x,\sqrt t) \subset B(x,r_x)\in \mathcal B_n$. This holds, e.g., if $\sqrt t \leq 2^{n\al-3}$, which we assume to be true.
We have by Lemma \ref{lem.interp}  and Theorem \ref{uniform.bound} that 
\EQS{\label{eq6.2}
\frac 1 {  t}\int_0^t \int_{B(x,\sqrt t)} |u|^3\,dx\,ds &\lesssim r_x^{3\la/2  }t^{-3/4}  a_n(t)^{ 3/4} b_n(t)^{ 3/4} +  r_x^{3\la /2}  t^{ -3/4 } a_n(t)^{3/2}
\\&\lesssim %
r_x^{3 \la /2}  t^{-3/4}  a_n(0)^{3/2}.
}
In the preceding application of Theorem \ref{uniform.bound}, we need to have  $T_n \geq t$.
This is true if $T_n\geq 2^{2n\al-6}$, at least for large $n$.
This holds provided $u_0\in \mathring{\mathfrak F}^{2}_{\al,\la}$, $\la\leq 1$, as can be seen by the estimate%
\EQN{
64  C_1 &\geq 2^{2n\al }(2^{-2n\al} + 2^{2n\al (\la-2)} \|u_0\|_{F^2_{\al,\la}({R=2^n})}^4)
=1+ 2^{2n\al (\la-1)}  \|u_0\|_{F^2_{\al,\la}({R=2^n})}^4.
}
For the vanishing of \eqref{eq6.2}, we require
\[
r_x^{\frac {3 \la} 2 }t^{-3/4}  \leq 1, \mbox{ i.e.,  }r_x^{2\la}\leq t.
\]
Note that $a_n(0)^{3/2}\to 0$ because $u_0\in \mathring{ \mathfrak F}^{2}_{\al,\la}$.  

Before we apply $\epsilon$-regularity we must similarly bound the pressure.
The pressure is controlled using Lemma \ref{lemma.pressure} noting that this implies, for $\bar B = B(x,r_x)$, that
\EQN{
&\frac 1 { t }\int_0^{t}\int_{B(x,\sqrt t)} |p -p_B(s)|^{3/2}\,dy\,ds \\&\qquad \lesssim \sup_{B'\in \mathcal B_n; B'\cap \bar B^{**}\neq \emptyset} \frac {1}{t}\int_0^{t} \int_{B'} |u|^3\,dy\,ds
 +\frac 1 t  |\bar B|^{\frac \la 2-\frac 1 2}  a_n(0)^{3/2}.
}
Note that we  replaced $p$ in Lemma \ref{thrm.epsilonreg}   with $p-p_B(s)$ because $\nb p$ in \eqref{eq.NSE} is equal to $\nb (p-p_B(s))$.
Observe that if $B'\in \mathcal B_n$, $B'\cap \bar B^{**}\neq \emptyset$, and $y$ is the center of such a ball,  then $|y|\lesssim |x|$ with the suppressed constant independent of $n$.  
Applying the Gagliardo-Nirenberg inequality to $B'$,    we have for $t<T_n$ that
\EQN{
 \int_0^t \int_{B'} |u|^3\,dx\,ds &\lesssim |B'|^{\frac \la 2 }t^{1/4} a_n(t)^{3/4}b_n(t)^{3/4} + |B'|^{\frac {\la-1} 2} t a_n(t) ^{3/2}.
}
Hence,
\[
 \frac {1}{t}\int_0^{t} \int_{B'} |u|^3\,dy\,ds \lesssim r_x^{\frac {3\la} 2 }t^{-3/4} a_n(0)^{3/2}+ r_x^{3\frac {\la-1} 2}  a_n(0) ^{3/2}.
\]
Because $r_x^{2\la}\leq t$, $\la\leq 1$ and $u_0\in \mathring {\mathfrak F}^{2}_{\al,\la}$, the preceding terms can be made small by taking $n$ large.

Combining these estimates we see that
\[
	\frac 1 {t} \int_0^{t} \int_{B(x,\sqrt t)} |u|^3\,dy\,ds	
 + \frac 1 {t}\int_0^{t}\int_{B(x,\sqrt t)} |p -p_B(s)|^{3/2}\,dy\,ds < C a_n(0)^{3/2}
< \e_*,
\]
provided   $n$ is large enough and $r_x^{2\la}\leq t\leq 2^{2n\al -6}$. Let $n_*\in \N$ be the smallest such value of $n$ so that the second inequality holds.
Then, applying Theorem \ref{thrm.epsilonreg}  in $B(x,\sqrt t) \times [0, t]$ for $ |x|^{\al \la} \leq \sqrt t$ implies $u$ is regular in
$B(x,\si \sqrt t )\times [ (1-\si^2)t   , t]$, where we will specify $\si$ momentarily{, with
\[ \|u(x,t)\|_{L^\I (B(x,\si \sqrt t )\times [ (1-\si^2)t   , t])} \lesssim (\si \sqrt t)^{-1}.\]
} 
This holds in particular for $t = 2^{2n_*\al -6}$.
Repeating this for $n\geq n_*$ we find that $u$ is regular in%
\[
\bigcup_{n\geq n_* }  \bigcup_{|x|^{2\la\al}\leq  2^{2n\al -6 }} B(x,\si 2^{n \al  - 3} )\times [(1-\si^2) 2^{2n\al-6}, 2^{2n\al-6}].
\]
Choose $\si$ so that%
\[
(1-\si^2)2^{2n\al  -6}= 2^{2(n-1)\al-6},
\]
which ensures there are no time-gaps in the preceding double union. 
If $(x,t)$ satisfies $|x|^{2\al\la}\leq  t$, $t>M$ and $n$ is chosen so that
$(1-\si^2)2^{2n\al  -6} \leq t \leq 2^{2n\al-6}$, then $(x,t) \in \cup_{|x|^{2\la\al}\leq  2^{2n\al -6}}  B(x,\si 2^{n \al  - 3} )\times [(1-\si^2) 2^{2n\al-6}, 2^{2n\al-6}]$. Hence $u$ is regular within the set
\[
\{ (x,t): |x|^{2\al\la} \leq t; t\geq M\},
\]
where  $M= (1-\si^2)2^{2n_*\al-6}$ { and additionally satisfies $|u(x,t)|\lesssim \sqrt t^{-1}$}. This finishes the proof of Theorem  \ref{thm.ER}. 
\end{proof}

\begin{remark}
When $1<\lambda \leq 2$, observe that in the argument above we required the following:
\EQ{
\lim _{n\to \infty} 2^{n\al (\la-1)}a_n(0)=0;
\quad
\lim _{n\to \infty} r_x^\la t^{-1/2}a_n(0)=0;
\quad
\lim _{n\to \infty} r_x^{\la-1}a_n(0)=0.
}
If
\[
a_n(0) \le o(1) 2^{n\al(1-\la)},
\]
then we have smallness when
\[
r_x \le 2^{n \al}, \quad r_x^\la t^{-1/2} \lec 2^{n \al(\la-1)}.
\]
This leads to an algebraic region of the form,  {for example,}%
\[
\{ (x,t); |x|^{2\al} \lesssim t;\ 1\lesssim t   \}.
\]
However, this does not give a  new result compared to  Theorem  \ref{thm.ER} because  the condition \[
\lim _{n\to \infty} 2^{n\al (\la-1)}a_n(0)=0,
\]
implies $u_0 \in\mathring{ \mathfrak F}^2_{\al,1}$, as we presently explain.
 Note that  $a_n$ is defined with respect to $F^{2}_{\al,\la}$. Suppose $f$ satisfies the preceding condition. If   $|x|\leq 2^n$, then  
\[
\frac 1 {2^{n\al}}\int_{B(x,2^{\al n}/8)} |f|^2 \,dx= \frac {2^{n\al (\la-1)}} {2^{n\al \la }}  \int_{B(x,2^{\al n}/8)} |f|^2 \,dx \leq 2^{n\al (\la-1)}a_n(0).
\]
On the other hand, if 
$|x|>2^n$, then let $k\ge n$ satisfy $2^k\leq |x| <2^{k+1}$. 
Then,
\[
\frac 1 {|x|^\al} \int_{B(x,|x|^\al/8)}|f|^2\,dx \lesssim \frac 1 {2^{\al k}} \int_{B(x,|x|^\al/8)}|f|^2\,dx.
\]
We know  $\sup_{k\geq n}2^{k\al (\la-1)}a_k(0)<\I$. So, 
\[
\frac 1 {2^{\al k}} \int_{B(x,|x|^\al/8)}|f|^2\,dx \lesssim  \frac {2^{k\al (\la-1)}} {2^{k\al \la }}\int_{B(x,2^{\al (k+1)}/8)}|f|^2\,dx \leq \sup_{k\geq n}2^{k\al (\la-1)}a_k(0)  <\I.
\]
Hence, $f\in F^{2}_{\al,1}(2^n)$ with 
\EQ{
\| f \|^2_{F^{2}_{\al,1}(2^n)} \leq \sup_{k\geq n}2^{k\al (\la-1)}a_k(0).
}
Observing that the right-hand side above vanishes as $n\to \I$, we conclude  {$f\in\mathring{ \mathfrak F}^2_{\al,1}$.}
\end{remark}

\section*{Acknowledgments}
The research of ZB was supported in part by NSF grant DMS-2307097.
The research of both MC and TT was supported in part by
NSERC grants RGPIN-2018-04137 and RGPIN-2023-04534.
MC was also supported in part by a UBC 4-year fellowship.

\addcontentsline{toc}{section}{\protect\numberline{}{References}}

\medskip
Zachary Bradshaw, Department of Mathematics, University of Arkansas, Fayetteville, AR 72701, USA;
e-mail: zb002@uark.edu
\medskip

Misha Chernobai, Department of Mathematics, University of British Columbia, Vancouver, BC V6T 1Z2, Canada;
 e-mail: mchernobay@gmail.com

\medskip

Tai-Peng Tsai, Department of Mathematics, University of British Columbia,
Vancouver, BC V6T 1Z2, Canada; e-mail: ttsai@math.ubc.ca

\end{document}